\let\inf\relax \DeclareMathOperator*\inf{\vphantom{p}inf}
\let\max\relax \DeclareMathOperator*\max{\vphantom{p}max}
\let\min\relax \DeclareMathOperator*\min{\vphantom{p}min}
\newcommand{\be}{\begin{equation}}
\newcommand{\ee}{\end{equation}}
\numberwithin{equation}{section}
\numberwithin{figure}{section}
\newtheorem{theorem}{Theorem}[section]
\theoremstyle{plain}
\newtheorem{definition}[theorem]{Definition}
\newtheorem{lemma}[theorem]{Lemma}
\newtheorem{notation}[theorem]{Notation}
\newtheorem{proposition}[theorem]{Proposition}
\newtheorem{remark}[theorem]{Remark}
\numberwithin{figure}{section}
\newcommand{\De}{{\Delta}}
\newcommand{\RR}{{\mathbb{R}}}
\newcommand{\DD}{{\mathbb{D}}}
\newcommand{\EE}{{\mathbb{E}}}
\newcommand{\NN}{{\mathbb{N}}}
\newcommand{\PP}{{\mathbb{P}}}
\newcommand{\CF}{{\mathcal{F}}}
\newcommand{\CH}{{\mathcal{H}}}
\newcommand{\CA}{{\mathcal{A}}}
\newcommand{\CS}{{\mathcal{S}}}
\newcommand{\CT}{{\mathcal{T}}}
\newcommand{\CB}{{\mathcal{B}}}
\newcommand{\indic}{{\mathds{1}}}
\newcommand{\comment}[1]{}
\def\p{\vskip4truept \noindent}
\newcommand{\tr}{{\!\!~^\top\!\!}}
\newcommand{\Leb}{{\rm Leb}}
\title{Zero-sum stopping games with asymmetric information.}
\author{Fabien Gensbittel\thanks{Toulouse School of Economics, University of Toulouse  Capitole, Toulouse, France \newline
 Manufacture des Tabacs, MF213, 21, All\'{e}e de Brienne 31015 Toulouse Cedex 6. \newline E-mails:\href{mailto:fabien.gensbittel@tse-fr.eu}{fabien.gensbittel@tse-fr.eu},\href{mailto:christine.gruen@tse-fr.eu}{christine.gruen@tse-fr.eu}}, Christine Gr\"{u}n$^*$}
\begin{document}
\maketitle

\begin{abstract}
We study a model of two-player, zero-sum, stopping games with asymmetric information. We assume that the payoff depends on two continuous-time Markov chains $(X_t), (Y_t)$, where $(X_t)$ is only observed by player 1 and $(Y_t)$ only by player 2, implying that the players have access to stopping times with respect to different filtrations.  We show the existence of a value in mixed stopping times and provide a variational characterization for the value as a function of the initial distribution of the Markov chains. We also prove a verification theorem for optimal stopping rules which allows to construct optimal stopping times. Finally we use our results to solve explicitly two generic examples.
\end{abstract}

\vspace{6cm}
\noindent\textbf{Acknowledgments :} 
The two authors gratefully acknowledge the support of the Agence Nationale de la Recherche, under grant ANR JEUDY, ANR-10-BLAN 0112.

\newpage
\setcounter{tocdepth}{1}

\section{Introduction}\label{sectionintroduction}

In this paper we consider a two player zero-sum stopping game with asymmetric information. The payoff depends on two independent continuous time Markov chains $(X_t, Y_t)_{t \geq 0}$ with finite state space $K\times L$, commonly known initial law $p\otimes q$ and infinitesimal generators $R= (R_{k,k'})_{k,k' \in K}$ and $Q=(Q_{\ell,\ell'})_{\ell,\ell' \in L}$. We assume that $X$ is only observed by player 1 while $Y$ is only observed by player 2. The fact that the game was not stopped up to some time gives to each player some additional information about the unknown state.  This is a crucial point as it implies that players have to take into account which information they generate about their private state when searching for optimal strategies. In consequence, our analysis is significantly different to that of classical stopping games.

We prove the existence of the value $V(p,q)$ in mixed stopping times allowing the players to randomize their stopping decision. Mixed stopping times have already been studied by Baxter and Chacon \cite{Baxter} and Meyer \cite{meyer} in a continuous time setting and applied to stopping games by Vieille and Touzi \cite{vieilletouzi} and Laraki and Solan \cite{larakisolan}. We also refer to the recent work of Shmaya and Solan \cite{shmayasolan} for a concise study of this type of stopping times. We work under the common assumptions on the payoffs used by Lepeltier and Maingueneau \cite{LepeltierMaingueneau}, in order to provide a variational characterization for $V$. Moreover, we show how this variational characterization can be applied to determine optimal strategies in the case of incomplete information on both sides. This result is new, since up to now similar characterizations are only known for the case of incomplete information on one side, and applies to a wide class of examples.

The variational characterization for $V$ that we provide can be seen as an extension (in a simple case) of the classical semi-harmonic characterization for stopping games of Markov processes with symmetric information (see e.g. Friedman \cite{friedman}, Eckstr\"om and Peskir \cite{eckstrom}) to models with asymmetric information. {It is reminiscent of the variational representation for the value of repeated games with asymmetric information given by Mertens and Zamir \cite{MZ} (see also  Rosenberg and Sorin \cite{rosenbergsorin} and Laraki \cite{laraki}), who solved explicitly several examples having a similar flavor than the examples we analyze. Our characterization is also equivalent to a first-order PDE with convexity constraints as introduced by Cardaliaguet in \cite{cardadiff,cardadouble}. Using appropriate PDEs with convexity constraints, the ideas of \cite{cardadiff,cardadouble} have been used to analyze particular classes of continuous-time games  by Cardaliaguet and Rainer \cite{cardastochdiff,carda12}, continuous time stopping games by Gr\"un \cite{grunstopping} and continuous time limit of repeated games by Gensbittel \cite{fabiencavu,fabiencovariance}}.

Most of the literature on dynamic games with asymmetric information deals with models where the payoff-relevant parameters of the game that are partially unknown (say information parameters) do not evolve over time. Some recent works focus on models of dynamic games with asymmetric information and evolving information parameters (see e.g. Renault \cite{Renault2006}, Neyman \cite{neyman}, Gensbittel and Renault \cite{GensbittelRenault}, Cardaliaguet et al. \cite{cardaetal}, Gensbittel \cite{gensbittel2013}). 
All these works consider dynamic games with current or terminal payoffs while in the present work we study the case of continuous-time stopping games with time-evolving information parameters.

The paper is structured as follows. First we give a description of the model and the main definitions. In the third section we establish existence and uniqueness of the value by a variational characterization  using PDE methods. We use this result in the following section to characterize optimal strategies for both players. In section 5 we present two examples where explicit expressions for the value as well as optimal strategies for both players are provided. The appendix collects auxiliary results and some technical proofs.

\section{Model}

\subsection{Notation}

For any topological space $E$, $\CB(E)$ denotes its Borel $\sigma$-algebra, $\De(E)$ denotes the set of Borel probability distributions on $E$ and  $\delta_x$ denotes the Dirac measure at $x\in E$. Finite sets are endowed with the discrete topology and Cartesian products with the product topology. If $E$ is finite, then $|E|$ denotes its cardinal and $\Delta(E)$ is identified with the unit simplex of $\RR^{E}$.  $\langle.,.\rangle$ and $|.|$ applied to vectors stand the usual scalar product and Euclidean norm while 
$\tr M$ stands for the transpose of a matrix $M$.

\subsection{The dynamics}

Let $K,L$ be two non-empty finite sets. 
We consider two independent continuous-time, homogeneous Markov chains $(X_t)_{t \geq 0}$ and $(Y_t)_{t \geq 0}$ with state space $K$ and $L$, initial laws $p \in \Delta(K)$, $q\in \Delta(L)$ and infinitesimal generators $R= (R_{k,k'})_{k,k' \in K}$ and $Q=(Q_{\ell,\ell'})_{\ell,\ell' \in L}$ respectively. $R_{k,k'}$ represents as usual the  jump intensity  of the process $X$ from state $k$ to state $k'$ when $k'\neq k$ and $R_{k,k}=- \sum_{k'\neq k} R_{k,k'}$.

We denote $\PP^X_{p}$ the law of the process $X$ defined on the canonical space of $K$-valued c\`{a}dl\`{a}g trajectories $\Omega_X=\DD([0,\infty), K)$
and $\PP^Y_{q}$ the law of the process $Y$ defined on the space $\Omega_Y=\DD([0,\infty),L)$.
Furthermore, let us define 
\[(\Omega,\CA;\PP_{p,q}):=(\Omega_X\times \Omega_Y, \CF^X_{\infty}\otimes \CF^Y_\infty, \PP^X_{p}\otimes \PP^Y_{q}).\]
We will identify the  right-continuous filtrations $\CF^X$ and $\CF^Y$ (see Theorem 26 p. 304 in \cite{Bremaud})  as filtrations defined on $\Omega$ as well as $\CF^X_{\infty}$-measurable random variables on $\Omega$ as $\CF^X_{\infty}$-measurable variables defined on $\Omega_X$ (and similarly for $Y$).

We consider a zero-sum stopping game, where player $1$ observes the trajectory of $X$, while player $2$ observes the trajectory of $Y$. 
So according to their information the dynamics of the game for player 1 are basically given by
\[\left(\PP_{p,q}[X_t=k|\mathcal{F}^X_s]\right)_{k \in K}= e^{(t-s)\tr R} \delta_{X_s},\quad \left(\PP_{p,q}[Y_t=\ell|\mathcal{F}^X_\infty]\right)_{\ell \in L}= e^{t \tr Q} q,\]
while for player 2 they are given by
\[\quad \left(\PP_{p,q}[X_t=k|\mathcal{F}^Y_\infty]\right)_{k \in K}= e^{t \tr R} p, \quad \left(\PP_{p,q}[Y_t=\ell | \mathcal{F}^Y_s]\right)_{\ell \in L}= e^{(t-s)\tr Q}\delta_{Y_s},\]
where we use the independence of $X$ and $Y$.
\p

\subsection{Mixed stopping times and payoff function}

Let $\CT^X$ denote the set of $\CF^{X}$ stopping times and $\CT^Y$ denote the set of $\CF^{Y}$ stopping times. 
\begin{definition}\label{mixedstoppingtimes} \
A mixed stopping time of the filtration $\CF^X$ on $\Omega$ is an $\CF^X_\infty \otimes \CB([0,1])$-measurable map $\mu$ defined on $\Omega\times [0,1]$ such that $\mu(.,u)\in\CT^{X}$ for all $u \in [0,1]$. We denote $\CT^X_m$ the set of mixed $\CF^{X}$-stopping times.

The definition mixed stopping times of the filtration $\CF^Y$ is similar.  We denote $\CT^Y_m$ the set of mixed $\CF^{Y}$-stopping times.

 A random time is  a $\CB([0,1])$-measurable map $\mu:[0,1]\rightarrow[0,+\infty]$. The set of random times is denoted $\CT^\emptyset_m$.
\end{definition}
Let $r>0$ denote a fix discount rate and $f \geq h$ two real-valued functions defined on $K \times L$. The players choose mixed stopping times $\mu(\omega,u)\in\CT^X_m$ and $\nu(\omega,v)\in \CT^Y_m$ respectively, in order to maximize (resp. minimize) the expected payoff:
\p
\be \label{exppayoff} 
\mathbb{E}_{p,q}\left [\int_0^1 \int_0^1  J(\mu,\nu)(\omega,u,v) du dv \right],
\ee
where
\be \label{payoff} 
J(\mu,\nu)(\omega,u,v):=e^{-r \nu} f(X_{\nu},Y_{\nu}) \indic_{\nu <\mu}+e^{-r \mu} h(X_{\mu},Y_{\mu}) \indic_{\mu \leq\nu},
\ee
{with the convention that $J(\mu,\nu)(\omega,u,v)=0$ on $\{\mu=\nu=+\infty\}$.} Furthermore we set
\[ \bar{J}(\mu,\nu):=\int_0^1 \int_0^1  J(\mu,\nu)(\omega,u,v) du dv.\]
\p
The upper value of the game is defined by
\be
 V^+(p,q):= \inf_{\nu \in  \CT^Y_m}\; \sup_{\mu \in  \CT^X_m}\; \mathbb{E}_{p,q}\left [ \bar{J}(\mu,\nu) \right],
\ee
the lower value by
\be V^-(p,q):=  \sup_{\mu \in  \CT^X_m}\; \inf_{\nu \in  \CT^Y_m}\;  \mathbb{E}_{p,q}\left [  \bar{J}(\mu,\nu) \right],
\ee
where by definition $V^-(p,q)\leq V^+(p,q)$. When there is equality, we say that the game has a value $V:=V^-=V^+$.

{In the above expressions, the payoff is an expectation with respect to $(\omega,u,v)$, and we distinguish $\omega$, which is the trajectory of the process $(X,Y)$ and represents the exogenous randomness of the model, from $u$ and $v$, which are strategic randomizations introduced by the players.}

{Despite the apparent asymmetry in the payoff function \eqref{payoff}, our model is symmetric in the following sense: If we define the modified payoff 
\[J'(\mu,\nu)(\omega,u,v):=e^{-r \nu} f(X_{\nu},Y_{\nu}) \indic_{\nu \leq\mu}+e^{-r \mu} h(X_{\mu},Y_{\mu}) \indic_{\mu <\nu},\]
then $J' \geq J$,  $\lim_n J(\mu+\frac{1}{n},\nu) = J'(\mu,\nu)$ and $\lim_n J'(\mu,\nu+\frac{1}{n})=J(\mu,\nu)$, so that by bounded convergence, the upper and lower values of the games with respective payoffs $J$ and $J'$ coincide (see Lemma 5 in \cite{LepeltierMaingueneau} for more details). Therefore, we shall state and prove most of our results only on one side (properties of the upper value in section \ref{sectionresult}, construction of an optimal stopping time for player $1$ in section \ref{sectionverif}), as the corresponding result on the other side (properties of the lower value, optimal stopping time for player $2$) can be obtained by exchanging the roles of the players.}

Let us comment the notion of mixed (or randomized) stopping times. Our definition corresponds to the classical notion of mixed strategy in a game: for each player, a mixed strategy is a probability distribution over his possible pure (i.e. standard) strategies. 
Following  Aumann \cite{aumann}, a natural way to define mixed strategies when the set of pure strategies is a set of measurable maps but has no simple measurable structure (as $\CT^X$ and $\CT^Y$ in our model), is to introduce an auxiliary probability space which is used as a randomization device by the player, allowing him to choose at random a pure strategy. 
With such a definition, one has only to require the mixed strategy to be jointly measurable for the expected payoff to be well-defined. 
We refer the reader to \cite{shmayasolan} for a discussion of the different equivalent definitions of randomized stopping times. From an analytic point of view, the set of randomized stopping times $\CT^X_m$ is the closed convex hull for the weak topology (which may coincide with the closure in some cases but not in our model\footnote{Indeed, if $X_0=0$, for sufficiently small $\alpha>0$, it is not possible to obtain the linear form $\frac{1}{2}(\EE_{\delta_0}[Z_0+Z_{\alpha}])$ as a limit of classical stopping times. This is due to the fact that the event $\{ \forall s\in [0,\alpha], \, X_s=0\}$ has probability close to $1$ for small $\alpha$, implying that no stopping time in $\CT^X$ can be equal to $0$ and $\alpha$ with probabilities close to $1/2$.}) of the set of stopping times $\tau \in \CT^X$ seen as linear forms $\EE[Z_\tau]$ acting on the class of bounded $\CF^X_\infty$-measurable continuous processes $Z$, and it was proved to be compact in \cite{Baxter} and \cite{meyer}. We do not use such topological properties of mixed stopping times and our existence result is not based on some general minmax theorem, but convexity plays a central role in our analysis, as can be seen in the statement of Theorem \ref{thmextremal}.



It is well known that in games where both players have the same information the value exists under fairly general assumptions, when the players are only allowed to choose stopping times adapted to their common information. The existence of the value implies that there is no loss for either of the players if they give their adversary the advantage of playing second. For example in $V^+(p,q)$ the maximizer has an information advantage by knowing exactly which strategies he is facing. In games with incomplete information, existence of a value in non-randomized strategies is in general not true, as shown in the first example in section \ref{sectionexample}. The intuitive explanation is that the first player would, by choosing non randomized strategies, reveal too much information to the second player about the  process $X$, which is not observed by player 2. In contrast, using randomized strategies allow each player to manipulate the beliefs of his opponent. This translates into the fact that the value is concave in $p$ and  convex in $q$ (see Lemma \ref{propertiesV}) and, as can be seen in the examples of section \ref{sectionexample}, in general non-linear
with respect to the initial distributions $(p,q)\in\Delta(K)\times\Delta(L)$.
\section{Existence and characterization of the value}\label{sectionresult}
\subsection{Result}
Our first result is the existence of the value together with a variational characterization, which is a first-order PDE with convexity constraints. These constraints are expressed using the notion of extreme points as in \cite{carda12,laraki,rosenbergsorin}, {but could be equivalently written in terms of dual viscosity solution as in \cite{cardadiff} or as a viscosity solution of an equation with a double obstacle as in \cite{cardadouble}. The main advantage of our formulation is that the corresponding comparison principle (Theorem \ref{comparisonthm2}) is a short an easy adaptation of the one given by Mertens and Zamir in \cite{MZ}}. 
\begin{definition}
A function $g:\Delta(K)\times\Delta(L) \rightarrow \mathbb{R}$ is said to be a saddle function if it is concave with respect to $p\in \Delta(K)$ and convex with respect to $q \in \Delta(L)$. \\
For any $q\in\Delta(L)$, the set of extreme points $Ext(g(.,q))$ is defined as the set of all $p\in\Delta(K)$ such that 
\[ (p,g(p,q))=\lambda(p_1,g(p_1,q))+(1-\lambda)(p_2,g(p_2,q)) \]
with $\lambda\in (0,1)$ and $p_1,p_2\in\Delta(K)$ implies $p_1=p_2=p$.
The  set of extreme points $Ext(g(p,.))$ for any $p\in\Delta(K)$ is defined in a similar way.
\end{definition}
\begin{remark}
One can check easily that $Ext(g(.,q))$ is the set of $p\in \Delta(K)$ such that $(p,g(p,q))$ is an extreme point (in the usual sense, see Definition \ref{extremepoint}) of the hypograph of $g(.,q)$ defined by $\{(p',t)\in \Delta(K)\times \RR \, |\, t \leq g(p',q)\}$. That $p \in Ext(g(.,q))$ means that $g(.,q)$ is strictly concave at $p$, i.e. not affine in any non-trivial segment containing $p$. 
Note also that this definition implies that all the Dirac masses $\delta_k$ for $k \in K$ always belong to  $Ext(g(.,q))$.
Similarly properties hold for $Ext(g(p,.))$.
\end{remark}

Let $f,h$ be extended linearly on the set $\Delta(K)$, i.e.
\[ \forall (p,q)\in \Delta(K)\times \Delta(L), \; f(p,q):=\sum_{(k,\ell) \in K\times L} p_k q_\ell f(k,\ell),\;\; h(p,q):=\sum_{(k,\ell) \in K\times L} p_k q_\ell h(k,\ell).\]

\begin{theorem}\label{thmextremal}
For all $(p,q) \in \Delta(K)\times \Delta(L)$, the game has a value 
\[V(p,q):= V^+(p,q)= V^-(p,q),\]
and $V$ is the unique Lipschitz saddle function on $\Delta(K)\times \Delta(L)$ such that:\\
(Subsolution) $\forall q\in \Delta(L), \;  \forall p \in Ext(V(.,q))$, 
\begin{multline}\label{subpoint} 
\max\{ \min\{ rV(p,q) -\vec{D}_1V(p,q; \tr Rp) - \vec{D}_2V(p,q ; \tr Q q ); V(p,q)- h(p,q)\};\\
V(p,q)-f(p,q) \} \leq 0, 
\end{multline}
(Supersolution) $\forall p\in \Delta(K), \; \forall q \in Ext(V(p,.))$,
\begin{multline}\label{superpoint}
 \max\{ \min\{ rV(p,q) -\vec{D}_1V(p,q; \tr Rp) - \vec{D}_2V(p,q ; \tr Q q ); V(p,q)- h(p,q)\};\\
 V(p,q)-f(p,q) \} \geq 0, 
\end{multline}
where $\vec{D}_1V(p,q ;\xi)$ and $\vec{D}_2V(p,q ;\zeta)$ denote the directional derivatives of $V$ at $(p,q)$ with respect to the first and second variables in the directions $\xi$ and $\zeta$ respectively.
\end{theorem}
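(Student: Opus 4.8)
The plan is to show that $V^+$ is a subsolution and $V^-$ a supersolution of the variational inequality, to prove a comparison principle valid in the class of Lipschitz saddle functions, and to combine the two: comparison yields $V^+\le V^-$, while $V^-\le V^+$ holds by definition, so that $V:=V^+=V^-$; this common function is then simultaneously a sub- and a supersolution, i.e.\ a solution, and uniqueness follows from a further application of comparison, since any competing Lipschitz saddle solution is both a sub- and a supersolution and is therefore squeezed between $V$ and $V$. It is convenient to first reformulate the extreme-point conditions \eqref{subpoint}--\eqref{superpoint}: a Lipschitz saddle function $W$ satisfies the subsolution condition if and only if, on the relative interior of $\Delta(K)\times\Delta(L)$, it is a viscosity subsolution of a first-order equation with convexity constraint of the type introduced in \cite{cardadouble,carda12}, the constraint being inactive precisely at the points of $Ext(W(\cdot,q))$, and symmetrically for the supersolution condition in the $q$ variable. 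All subsequent arguments are phrased in this more robust framework.

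\emph{Preliminary properties.} Since the payoff \eqref{payoff} is bounded and $\PP_{p,q}=\PP^X_p\otimes\PP^Y_q$ depends linearly, hence Lipschitz-continuously, on $(p,q)$, both $V^+$ and $V^-$ are bounded and Lipschitz on $\Delta(K)\times\Delta(L)$. The saddle property (Lemma \ref{propertiesV}) follows from a splitting argument: given $p=\lambda p_1+(1-\lambda)p_2$, the maximizer — who observes $X_0$ — uses his private randomization together with $X_0$ to draw a label $\sigma\in\{1,2\}$ with $\PP[\sigma=1]=\lambda$ and $\PP[X_0=\cdot\mid\sigma=i]=p_i$, and then plays a stopping time that is $\ep$-optimal in the game started at $p_\sigma$; as the opponent does not observe $\sigma$, the expected payoff against any fixed $\nu$ is the corresponding convex combination, which gives concavity of $V^-(\cdot,q)$; the other three inequalities are obtained symmetrically, the minimizer splitting in $q$ and using that she moves last in $V^-$ and first in $V^+$. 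Concavity in $p$ and convexity in $q$ ensure the existence of the one-sided directional derivatives $\vec D_1V$ and $\vec D_2V$ in \eqref{subpoint}--\eqref{superpoint}, including at boundary points of the simplices in the directions $\tr Rp$ and $\tr Qq$, which keep the beliefs inside $\Delta(K)$ and $\Delta(L)$ for small times since $R$ and $Q$ are generators.

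\emph{The sub/supersolution properties.} I would next establish a dynamic programming inequality on $[0,\delta]$. For $V^+$: the minimizer may refrain from stopping before $\delta$ and then play $\ep$-optimally, so $V^+(p,q)\le e^{-r\delta}\,\EE\bigl[V^+(\tilde p_\delta,\tilde q_\delta)\bigr]+o(\delta)$, where $(\tilde p_\delta,\tilde q_\delta)$ denote the conditional laws generated by the players' behaviour on $[0,\delta]$; moreover the obstacle inequality $V^+\le f$ is immediate (the minimizer may stop at $0^+$). Testing from above at a point $p\in Ext(V^+(\cdot,q))$ — where the maximizer's best response reveals no information on $X$, so that the $p$-component of the belief drifts only through $\tr Rp$, while convexity of $V^+(p,\cdot)$ makes the $q$-component contribute only $\tr Qq$ at first order — and letting $\delta\to0$ yields, according to whether $V^+>h$ or $V^+\le h$ at that point, either $rV^+-\vec D_1V^+(p,q;\tr Rp)-\vec D_2V^+(p,q;\tr Qq)\le0$ or $V^+-h\le0$; together with $V^+-f\le0$ this is exactly \eqref{subpoint}. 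The argument for $V^-$ and \eqref{superpoint} is symmetric, with the roles of the players exchanged.

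\emph{Comparison — the main obstacle.} The crux is the comparison principle: if $W$ is a Lipschitz saddle subsolution and $W'$ a Lipschitz saddle supersolution, then $W\le W'$. Suppose $M:=\max(W-W')>0$. A Choquet-type reduction allows one to select a maximizer $(\bar p,\bar q)$ of $W-W'$ at which $\bar p\in Ext(W(\cdot,\bar q))$ and $\bar q\in Ext(W'(\bar p,\cdot))$: writing $\bar p$ as a convex combination of extreme points of $W(\cdot,\bar q)$, concavity of $W'$ in $p$ forces each of them to be a maximizer, and symmetrically in $q$ using convexity of $W$; reconciling the two reductions \emph{simultaneously} is the delicate step, and this is precisely where the equivalent formulation with an explicit convexity-constraint term — for which comparison is obtained by the usual doubling of variables, the constraint terms being absorbed into the extremality — does the work. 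At such a point, \eqref{subpoint} holds for $W$ and \eqref{superpoint} for $W'$; since $W(\bar p,\bar q)-W'(\bar p,\bar q)=M>0$ the obstacle alternatives are excluded ($W\le f$ everywhere and $W'\ge h$ everywhere follow from the respective conditions via Choquet representations, and $W'\ge f$ at $(\bar p,\bar q)$ would already contradict $M>0$), so one is left with $rW-\vec D_1W-\vec D_2W\le0$ for $W$ and the reverse inequality for $W'$ at $(\bar p,\bar q)$; subtracting and using that the directional derivatives of $W-W'$ at the global maximum are $\le0$ in the admissible directions $\tr R\bar p$ and $\tr Q\bar q$ gives $rM\le0$, contradicting $r>0$. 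Hence $W\le W'$, and applying this with $W=V^+$, $W'=V^-$ completes the proof.
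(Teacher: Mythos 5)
Your overall architecture coincides with the paper's: show $V^+$ is a subsolution and $V^-$ a supersolution of \eqref{subpoint}--\eqref{superpoint}, prove a comparison principle for Lipschitz saddle sub/supersolutions, and conclude $V^+\le V^-$ hence equality and uniqueness. The saddle and Lipschitz properties and the final comparison computation ($rM\le 0$ via subtraction of the two variational inequalities at a common maximizer and the fact that directional derivatives of $W-W'$ at a maximum are nonpositive) match the paper. However, the technical heart of the argument --- establishing \eqref{subpoint} for $V^+$ at points $p\in Ext(V^+(\cdot,q))$ --- is not proved in your proposal. You assert a primal dynamic programming inequality $V^+(p,q)\le e^{-r\delta}\EE[V^+(\tilde p_\delta,\tilde q_\delta)]+o(\delta)$ and the claim that ``at an extreme point the maximizer's best response reveals no information on $X$.'' Neither is justified, and the second is essentially equivalent to what must be shown. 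A primal DPP in terms of posterior beliefs is exactly what is problematic in games with incomplete information on both sides: the posteriors $(\tilde p_\delta,\tilde q_\delta)$ depend on both players' strategies, which are adapted to different filtrations, and the continuation value cannot be identified with $V^+$ evaluated at these posteriors without a circular optimality argument. The paper circumvents this by passing to the Fenchel conjugate $V^{+,*}(x,q)=\inf_p\{\langle x,p\rangle-V^+(p,q)\}$: after conjugation (and an application of Fan's minimax theorem) the informed player's problem decomposes over the initial state $k$, a genuine dynamic programming inequality for $V^{+,*}$ along the deterministic flow $\dot x=(rI-R)x$, $\dot q=\tr Qq$ can be proved (Proposition \ref{DPPbothsides}), a supersolution property in the dual variables follows (Proposition \ref{V*supersolbothsides2}), and the subsolution property for $V^+$ at extreme points is then recovered by convex duality --- first at exposed points via Fenchel's lemma and the envelope theorem, then at extreme points by density (Proposition \ref{Vsubsolbothsides2}). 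None of this machinery, nor a workable substitute, appears in your sketch.

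A second, smaller gap is in the comparison principle. You correctly identify that one must find a maximizer $(\bar p,\bar q)$ of $W-W'$ that is \emph{simultaneously} an extreme point of $W(\cdot,\bar q)$ and of $W'(\bar p,\cdot)$, and you correctly note the mechanism (concavity of $W'$ in $p$ propagates the maximum along any affine decomposition of $W(\cdot,\bar q)$, and symmetrically), but you leave the simultaneity unresolved, deferring to an unspecified reformulation with convexity-constraint terms and ``doubling of variables.'' Doubling of variables is not what is needed here and would not obviously respect the extreme-point constraints. The paper's resolution is short and elementary: take $(\bar p,\bar q)$ to be an extreme point of the convex hull of the compact argmax set $C$ (Krein--Milman); then any nontrivial decomposition $\bar p=\lambda p_1+(1-\lambda)p_2$ with $W(\cdot,\bar q)$ affine forces $(p_1,\bar q),(p_2,\bar q)\in C$ and hence $p_1=p_2$ by extremality in $C$, and symmetrically in $q$. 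You should also note that $W'\ge h$ is only needed, and only available, at the chosen point $(\bar p,\bar q)$ (where $\bar q\in Ext(W'(\bar p,\cdot))$ makes the supersolution inequality applicable), not ``everywhere via Choquet representations.''
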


%

Let us comment \eqref{subpoint} and \eqref{superpoint}. If $p$ is an extreme point, i.e. $p \in Ext(V(.,q))$, \eqref{subpoint} is a standard subsolution property for the obstacle PDE,
\begin{multline}\label{point}
 \max\{ \min\{ rV(p,q) -\vec{D}_1V(p,q; \tr Rp) - \vec{D}_2V(p,q ; \tr Q q ); V(p,q)- h(p,q)\};\\
 V(p,q)-f(p,q) \} =0, 
\end{multline}
while if  $q$ is an extreme point, i.e. $q \in Ext(V(p,\cdot))$, \eqref{superpoint} is the supersolution property for this PDE. We note that because $V$ is a saddle function, it is reasonable to state the equation in the strong form since we can define directional derivatives. This allows us also to derive the comparison principle in section \ref{sectioncomparison} with classical tools.\\

Moreover the game where neither player observes $(X_t,Y_t)$ corresponds to a game with deterministic dynamic on $\Delta(K)\times\Delta(L)$ given by the marginal distribution of $(X_t,Y_t)$, i.e. $(e^{t\tr R} p,e^{t\tr Q} q)$ for some initial values $(p,q)\in\Delta(K)\times\Delta(L)$. It is well known that its value {denoted $S(p,q)$} is characterized as the unique viscosity solution to
{ 
\begin{multline}\label{pointstandard}
\max\{ \min\{ rS(p,q) -\langle \nabla_p S(p,q), \tr Rp\rangle -\langle \nabla_q S(p,q), \tr Qp\rangle; S(p,q)- h(p,q)\};\\
 S(p,q)-f(p,q) \} =0, 
\end{multline}
}
which is  \eqref{point} with a weaker a priori regularity. So loosely speaking, for extreme points of the value function of the game with incomplete information, $V(p,q)$ solves the same variational inequalities as  {$S(p,q)$}. However, the set of extreme points might be very small (but it contains at least all the Dirac masses) and our characterization also requires $V$ to be a saddle function. It follows that the value function $V(p,q)$ significantly differs from the solution to  \eqref{pointstandard} in general, as seen in the second example in section \ref{sectionexample} where the set of extreme points of $V$ contains only one point which is not a Dirac mass (see Remark \ref{solutionblind}).

{The rest of this section is devoted the the proof of Theorem \ref{thmextremal}. We adapt the techniques in Cardaliaguet \cite{cardadiff} to the context of stopping games.  The main structure of our proof is similar and goes as follows:
\begin{itemize}
\item We first establish in Lemma \ref{propertiesV} that $V^+$ is a Lipschitz saddle function (the convexity properties of values of games with incomplete information hold in very general models, see e.g. chapter 2 in \cite{Sorin}) 
\item We prove in Lemma \ref{lemmaminmax} that the concave conjugate $V^{+,*}$ of $V^+$ with respect to the first variable admits an alternative formulation, which is actually the lower value of the dual game introduced by De Meyer \cite{demeyerdual}. Using this result, we prove in Proposition \ref{DPPbothsides} that $V^{+,*}$ satisfies a super-dynamic programming principle.
\item As usual in the theory of differential games (see e.g. the seminal paper of Evans and Souganidis \cite{EvansSouganidis}), we deduce from the super-dynamic programming principle, that $V^{+,*}$ satisfies a variational inequality (Proposition \ref{V*supersolbothsides2}). Using duality, we prove in Proposition \ref{Vsubsolbothsides2} that this variational inequality implies that $V^+$ satisfies the subsolution property \eqref{subpoint}. 
\item By symmetry of the model, $V^-$ satisfies the supersolution property  \eqref{superpoint}. 
\item We conclude that the two preceding properties imply $V^-=V^+$ by proving a comparison principle (Theorem \ref{comparisonthm2}).
\end{itemize}}
\subsection{Properties of $V^+$ and its concave conjugate}
First we note the following facts.
\begin{remark}\label{purifystoppingtimes}
$V^+$ can be estimated from above by setting $\nu=0$ and taking into consideration that the obstacles satisfy $h\leq f$, we deduce that $V^+(p,q)\leq f(p,q)$. Similarly, we have $h(p,q) \leq V^+(p,q)$.
{On the other hand, we can replace the supremum over $\CT^X_m$ by a supremum over $\CT^X$ in the definition of $V^+$,  
i.e.
\be
\begin{array}{l}
 V^+(p,q)= \inf_{\nu \in  \CT^Y_m}\; \sup_{\mu \in \CT^X}\;  \EE_{p,q}[ \bar J (\mu,\nu)].
\end{array}
\ee
Indeed, using Fubini's theorem, for any $\nu\in \CT^Y_m$,  we have 
\[ \sup_{\mu \in  \CT^X_m}\EE_{p,q}[ \bar{J}(\mu,\nu) ]= \sup_{\mu \in  \CT^X_m}\int_0^1 \EE_{p,q}[ \int_0^1 J(\mu, \nu)dv] du \leq \sup_{\mu \in  \CT^X_m}\sup_{u \in [0,1]}  \EE_{p,q}[ \int_0^1 J(\mu(.,u), \nu)dv], \]
and since for all $\mu \in  \CT^X_m$ and all $u\in [0,1]$, $\mu(.,u)$ is a stopping time in $\CT^X$, we deduce that 
\[\sup_{\mu \in \CT^X_m}\;  \bar J (\mu,\nu)\leq \sup_{\mu \in \CT^X}\;  \bar J (\mu,\nu),\]
which proves the result since $\CT^X \subset \CT^X_m$.}
\end{remark}

{We summarize the properties of $V^+$ in the following lemma.}
\begin{lemma} \label{propertiesV}
{$V^+$ is a Lipschitz saddle function. }
\end{lemma}
\begin{proof}
\p
Note that for any $\mu \in  \CT^X_m,\nu \in  \CT^Y_m$, it holds by conditioning 
\be\label{auseinander}
\begin{array}{rcl}
\mathbb{E}_{p,q}\left [  \bar{J}(\mu,\nu) \right]&=&\sum_{k\in K, \ell\in L} \PP[X_0=k]\ \PP[Y_0=\ell]\ \ \mathbb{E}_{p,q} \left [ \bar{J}(\mu,\nu) | X_0=k, Y_0=\ell\right]\\
\ \\
&=&\sum_{k\in K, \ell\in L} p_k\  q_\ell \mathbb{E}_{\delta_k,\delta_\ell} \left [\bar{J}(\mu,\nu) \right],
\end{array}
 \ee
$\delta_k$, $\delta_\ell$ denoting the Dirac masses at $k,\ell$ identified with the $k$-th, $\ell$-th vectors in the canonical bases of $\RR^K$ and $\RR^L$ respectively.
\p
In order to show the Lipschitz continuity, let $p,p'\in\Delta(K), q,q'\in\Delta(L)$ such that $0<V^+(p,q)-V^+(p',q')$. Choosing $\nu^*\in \CT^Y_m$ $\varepsilon$-optimal for $V^+(p',q')$ and $\mu^*\in \CT^Y_m$ $\varepsilon$-optimal for $\sup_{\mu \in \CT^X_m}\;  \mathbb{E}_{p,q}\left [  \bar{J}(\mu,\nu^*) \right]$ we have
\be
0<V^+(p,q)-V^+(p',q')\leq \mathbb{E}_{p,q}\left [  \bar{J}(\mu^*,\nu^*) \right]-\mathbb{E}_{p',q'}\left [  \bar{J}(\mu^*,\nu^*) \right]+2\varepsilon
\ee
for $\varepsilon$ arbitrarily small. The Lipschitz continuity follows then immediately by (\ref{auseinander}).
\p
Furthermore we claim that:
\be \label{eqVconcave}
\begin{array}{rcl}
V^+(p,q)&=& \inf_{\nu \in  \CT^Y_m}\; \sup_{\mu \in  \CT^X}\;  \mathbb{E}_{p,q}\left [  \bar{J}(\mu,\nu) \right]\\
& = & \inf_{\nu \in \CT^Y_m}\;  \sum_{k\in K}\ p_k \ \left(\sup_{\mu \in  \CT^X} \mathbb{E}_{\delta_k,q} \left [\bar{J}(\mu,\nu)\right]\right).
\end{array}
\ee
Indeed, $V^+(p,q)$ is clearly less or equal than the second line in the above equation. To prove the reverse inequality, for any $\nu\in \CT^Y_m$, and any $k \in K$, let $\mu^k$ be some  $\varepsilon$-optimal stopping time for the problem $\sup_{\mu \in  \CT^X} \mathbb{E}_{\delta_k,q} \left [\bar{J}(\mu,\nu)\right]$. Setting $\mu= \sum_{k\in K}\indic_{X_0 =k}\mu^k$ we note that
\[ \sum_{k\in K}\ p_k \ \mathbb{E}_{\delta_k,q} \left [\bar{J}(\mu^k,\nu)\right] = \mathbb{E}_{p,q} \left [\bar{J}(\mu,\nu)\right]\]
and  \eqref{eqVconcave} follows by sending $\varepsilon$ to zero.
\p
We deduce from \eqref{eqVconcave} that $p \rightarrow V^+(p,q)$ is concave as an infimum of affine functions. The convexity in $q$ follows by the classical splitting method. Let $q_1,q_2,q\in\Delta(L)$, $\lambda\in (0,1)$ such that
\[q=\lambda q_1+(1-\lambda) q_2.\]
We choose $\nu_1 \in \CT^Y_m$, $\nu_2 \in \CT^Y_m$ $\varepsilon$-optimal for $V^+(p,q_1)$ and  $V^+(p,q_2)$ respectively. Then we will construct  $\nu\in \CT^Y_m$ such that 
\be\label{equ2}
\mathbb{E}_{p,q}\left [  \bar{J}(\mu,\nu) \right]=\lambda \mathbb{E}_{p,q_1}\left [  \bar{J}(\mu,\nu_1) \right]+(1-\lambda) \mathbb{E}_{p,q_2}\left [  \bar{J}(\mu,\nu_2) \right].
\ee
The intuition of the construction is the following: At time $t=0$, player $2$, knowing $Y_0$, can choose at random a decision $d \in \{1,2\}$  such that the conditional law of $Y_0$ given that $d=1$ is $q_1$ and the conditional law of $Y_0$ given $d=2$ is $q_2$. He will then play $\nu_1$ if $d=1$ and $\nu_2$ when $d=2$. More precisely, we set
\begin{equation}
\nu(\omega,u)= \sum_{\ell=1}^L  \indic_{Y_0=\ell} \left(\indic_{u\in[0,\frac{\lambda (q_1)_\ell}{q_\ell}]} \nu_1(\omega,\frac{q_\ell}{\lambda (q_1)_\ell} u)+\indic_{u\in(\frac{\lambda (q_1)_\ell}{q_\ell},1]} \nu_2(\omega,\frac{q_\ell u-\lambda (q_1)_\ell}{(1-\lambda) (q_2)_\ell} )\right)
\end{equation}
By the definition of $\mu$ the probability to choose $\nu_1$ given that $Y_0=\ell$ is $\frac{\lambda (q_1)_\ell}{q_\ell}$ whenever $q_\ell>0$ and the probability to choose $\nu_2$ is $\frac{(1-\lambda) (q_2)_\ell}{q_\ell}$. It follows that
\begin{align*}
\mathbb{E}_{p,q}\left [  \bar{J}( \mu,\nu) \right]&= \mathbb{E}_{p,q}\left [  \int_0^1 {J}(\mu,{\nu}(.,u)) du \right] \\
&= \mathbb{E}_{p,q}\bigg [  \sum_{\ell \in L} \indic_{Y_0=\ell}  \bigg( \int_0^{\frac{\lambda (q_1)_\ell}{q_\ell}} {J}(\mu,{\nu_1}(.,\frac{q_\ell}{\lambda (q_1)_\ell} u))\\
&\ \ \ \ \ \ \ \ \ \ \ \ \ \ \ \ \ \ \ \ \ \ \ \ \ \ \ +  \int_{\frac{\lambda (q_1)_\ell}{q_\ell}}^{1} {J}(\mu,{\nu_2}(.,\frac{q_\ell u-\lambda (q_1)_\ell}{(1-\lambda) (q_2)_\ell})) \bigg)du \bigg]      \\
&=  \mathbb{E}_{p,q}\left [ \sum_{\ell \in L} \indic_{Y_0=\ell}\left(\frac{\lambda (q_1)_\ell}{q_\ell}\bar{J}(\mu,\nu_1) + \frac{(1-\lambda) (q_2)_\ell}{q_\ell} \bar{J}(\mu,\nu_2) \right)   \right]       \\
&= \lambda \mathbb{E}_{p,q_1}\left [  \bar{J}(\mu,\nu_1) \right]+(1-\lambda) \mathbb{E}_{p,q_2}\left [  \bar{J}(\mu,\nu_2) \right].
\end{align*}
Maximizing (\ref{equ2}) over $\mu \in  \CT^X_m$ yields then, using the $\varepsilon$ optimality of $\nu_1$ and $\nu_2$,
\[ V^+(p,q)\leq   \lambda V^+(p,q_1)+ (1-\lambda )V^+(p,q_2)+\varepsilon \]
and the convexity in $q$ follows since $\varepsilon$ can be chosen arbitrarily small.
\end{proof}
\p

Next we define the concave conjugate in $p$ of $V^{+}$  as 
\[\forall x \in \RR^K, q\in \Delta(L), \;  V^{+,*}(x,q):= \inf_{p\in \Delta(K)}\{ \langle x,p \rangle - V^+(p,q)\}\]
As $h\leq V^+ \leq f$, it follows that:
\be
\begin{array}{l}
 f^*(x,q) \leq V^{*,+}(x,q) \leq h^*(x,q),\\
\end{array}
\ee
where the functions $h^*, f^*$ are defined as
\be
\begin{array}{l}
h^*(x,q):= \inf_{p\in \Delta(K)} \{\langle x,p \rangle - h(p,q)\},\;  f^*(x, q):=\inf_{p \in\Delta(K)} \{\langle x,p \rangle - f(p,q)\}.
\end{array}
\ee
\p
The next lemma provides an alternative formulation, {which is actually the opposite of the upper value of a modified version of the game, called the dual game (see \cite{demeyerdual}), where player $1$ chooses privately the initial state $X_0=k$ at the beginning of the game, and has to pay a cost $x^k$. }
\begin{lemma}\label{lemmaminmax}
We have the following, alternative representation: 
\be\label{rewrite1}
\forall x \in \RR^K, q\in \Delta(L), \;V^{+,*}(x,q)=\sup_{\nu \in \CT^Y_m}\; \; \inf_{\mu \in \CT^X}\; \inf_{p \in \Delta(K)} \big(\langle x,p \rangle - \EE_{p,q}[\bar{J}(\mu,\nu)] 
\big)\ee
\end{lemma}

\begin{proof}
Using remark \ref{purifystoppingtimes}, we have 
\begin{align*}
V^{+,*}(x,q)=   \inf_{p \in \Delta(K)}\; \sup_{\nu \in \CT^Y_m}\; \inf_{\mu \in \CT^X}\; \big(\langle x,p \rangle - \EE_{p,q}[\bar{J}(\mu,\nu)]\big) .
\end{align*}
Then, we will apply Fan's minmax theorem (see \cite{fan}) to deduce that: 
\begin{align*}
V^{+,*}(x,q)&= \inf_{p \in \Delta(K)}\; \sup_{\nu \in \CT^Y_m} \; \inf_{\mu \in \CT^X}\; \big(\langle x,p \rangle - \EE_{p,q}[\bar{J}(\mu,\nu)]\big)\\ 
&=  \sup_{\nu \in \CT^Y_m}\; \inf_{p \in \Delta(K)}\; \inf_{\mu \in \CT^X}\;\big( \langle x,p \rangle - \EE_{p,q}[\bar{J}(\mu,\nu)]\big). 
\end{align*}
In order to apply Fan's minmax theorem, we have to check that the function
\[ (p,\nu)\in \Delta(K)\times \CT^Y_m \rightarrow  \inf_{\mu \in \CT^X} \; \big( \langle  x,p \rangle - \EE_{p,q}[\bar{J}(\mu,\nu)]\big) \]
is concave-like with respect to $\nu\in \CT^Y_m$ and affine (hence continuous) with respect to $p$ in the compact convex set $\Delta(K)$.
\p
To prove the concave-like property, given $\nu_1,\nu_2 \in \CT^Y_m$, and $\lambda \in (0,1)$, we define the mixed stopping time $\nu$ by
\[ \nu (\omega,u) = \nu_1(\omega, \frac{u}{\lambda}) \indic_{u \in [0,\lambda)} + \nu_2(\omega,\frac{u-\lambda}{1-\lambda}) \indic_{u \in [\lambda,1]}.\]
A simple change of variables gives:
\begin{align*}
 \inf_{\mu \in \CT^X} \big(\langle x,p \rangle - &\EE_{p,q}[ \bar{J}(\mu,\nu) ] \big) \\
 &\geq 
  \lambda \inf_{\mu \in \CT^X} \big( \langle x,p \rangle - \EE_{p,q}[ \bar{J}(\mu,\nu_1)] \big) + (1-\lambda) \inf_{\mu \in \CT^X} \big( \langle x,p \rangle - \EE_{p,q}[ \bar{J}(\mu,\nu_2)] \big),
\end{align*}
which is exactly the the concave-like property we need to apply Fan's theorem.
\p
The second property follows from the relation:
\[\inf_{\mu \in \CT^X} \;  \big(\langle  x,p \rangle - \EE_{p,q}[\bar{J}(\mu,\nu)]\big) = \langle x,p \rangle - \sum_{k\in K} p^k \sup_{\mu \in \CT^X} \EE_{\delta_k,q}[\bar{J}(\mu,\nu)]. \]
This last equation is proved in the same way as equation \eqref{eqVconcave} in Lemma \ref{propertiesV}.
\end{proof}

\subsection{Dynamic Programming for $V^{+,*}$}

We will now prove a dynamic programming inequality for $V^{+,*}$. {This result is very intuitive, it reflects the fact that it is sub-optimal for player 2 in the dual game to wait and start the game only at time $\varepsilon$ without using the information conveyed by the process $Y$ on the time interval $[0,\varepsilon)$.}
\begin{proposition}\label{DPPbothsides}
For all $\varepsilon>0$
\begin{align}\label{DPP}
 V^{+,*}(x,q) \geq \; \inf_{t \in [0,\varepsilon]}\; \left(e^{-r t} h^*(x_t,q_t )\indic_{t< \varepsilon} + e^{-r \varepsilon}V^{+,*}(x_\varepsilon,q_\varepsilon) \indic_{t = \varepsilon}\right),
\end{align}
where the dynamic $(x_t,q_t)$ is given by
\begin{equation*}
\forall t\geq 0, \; x_t=x+\int_0^t  (rI-R)x_s ds\ \;\;\;\;\textnormal{\textit{and} }\ \;\;\;\;\ q_t=q+\int_0^t \tr Q  q_s ds.
\end{equation*}
\end{proposition}

\begin{proof}
In order to prove the dynamic programming inequalities, we need to recall the definition of the shift operator $\theta^X$ on $\Omega_X$. 
For all $t \geq 0$, the map $\theta^X_t: \Omega_X \rightarrow \Omega_X$ is defined by 
\[ \forall s\geq 0, \;\theta^X_t(\omega_X) (s) = \omega_X(s+t) .\]
The shift operator  $\theta^Y$ on $\Omega_Y$ is defined similarly.

 Given $\varepsilon>0$, we consider the family $\CT^Y_{m,\varepsilon}$ of mixed stopping times $\nu \in \CT^Y_m$ such that there exists a mixed stopping time $\nu' \in \CT^Y_m$ and $\nu(\omega,v)= \varepsilon+ \nu'(\theta^Y_{\varepsilon}(\omega_Y),v)$.
\p
As $\CT^Y_{m,\varepsilon} \subset \CT^Y_m$, we have
\begin{multline}\label{ineqDPP}
V^{+,*}(x,q) \geq \sup_{\nu \in \CT^Y_{m,\varepsilon}}\;\inf_{\mu \in \CT^X}\; \inf_{p \in \Delta(K)} \Big( \langle x,p\rangle - \EE_{p,q}\Big[ e^{-r \mu}h(X_\mu,Y_\mu)\indic_{\mu < \varepsilon} \\ + e^{-r\varepsilon}\int_0^1 \big( e^{-r (\nu-\varepsilon)} f(X_{\nu},Y_\nu) \indic_{\nu<\mu} +e^{-r (\mu-\varepsilon)} h(X_{\mu},Y_\mu) \indic_{\mu \leq\nu} \big)dv \indic_{\mu \geq \varepsilon}\Big]\Big).
\end{multline}
Let us fix $\nu \in \CT^Y_{m,\varepsilon}$ (or equivalently $\nu' \in \CT^Y_m$), $\mu \in \CT^X$ and $p\in \Delta(K)$.
By conditioning, we obtain:
\begin{align*}
\EE_{p,q}&\Big[ e^{-r \mu}h(X_\mu,Y_\mu)\indic_{\mu < \varepsilon} + e^{-r\varepsilon}\int_0^1\big( e^{-r (\nu-\varepsilon)} f(X_{\nu},Y_\nu) \indic_{\nu<\mu} +e^{-r (\mu-\varepsilon)} h(X_{\mu},Y_\mu) \indic_{\mu \leq\nu} \big)dv\indic_{\mu \geq \varepsilon}\Big] \\
& = \EE_{p,q}\Big[ e^{-r \mu}\EE_{p,q}\big[h(X_\mu,Y_\mu)|\CF^X_{\mu}\big]\indic_{\mu < \varepsilon}  \\
&\qquad + e^{-r\varepsilon} \EE_{p,q}\big[\int_0^1\big(e^{-r (\nu-\varepsilon)} f(X_{\nu},Y_\nu) \indic_{\nu<\mu} +e^{-r (\mu-\varepsilon)} h(X_{\mu},Y_\mu) \indic_{\mu \leq\nu} \big)dv |\CF^{X,Y}_{\varepsilon} \big]\indic_{\mu \geq \varepsilon}\Big] 
\end{align*}
Recall that the stopping time $\mu$ of the filtration $\CF^{X}$ can be identified with a stopping time defined on $\Omega_X$. It is well-known that on the event $\mu \geq \varepsilon$, we have $\mu=\mu'(\omega_X,\theta^X_{\varepsilon}(\omega_X))+\varepsilon$, where $\mu'$ is $\CF^X_{\varepsilon}\otimes \CF^X_{\infty}$ measurable and for all $\omega$, $\mu'(\omega,.)$ is an $\CF^{X}$ stopping time (see theorem 103 p. 151 in \cite{dellacheriemeyer}). Then, using the Markov property, we deduce that
\begin{align*}
\indic_{\mu \geq \varepsilon}\EE_{p,q}&[ \int_0^1 \left(e^{-r (\nu-\varepsilon)} f(X_{\nu},Y_\nu) \indic_{\nu<\mu} +e^{-r (\mu-\varepsilon)} h(X_{\mu},Y_\mu) \indic_{\mu \leq\nu}\right) dv |\CF^{X,Y}_{\varepsilon} ] \\
&=\indic_{\mu \geq \varepsilon}\EE_{\delta_{X_{\varepsilon}},\delta_{Y_{\varepsilon}}}[\bar{J}(\mu'(\omega,.),\nu')].
\end{align*}
On the other hand, since $X$ and $Y$ are independent we have 
\begin{equation}\label{uncorrelated} \EE_{p,q}[ \delta_{X_{t},Y_{t}} | \CF^X_{t} ]= \delta_{X_{t}}\otimes q_t \in \Delta(K\times L).
\end{equation}
Using the usual properties of the optional projection (see e.g. \cite{dellacheriemeyer})
the previous equality implies $\EE_{p,q}[h(X_\mu,Y_\mu) |\CF^X_{\mu}] =h(\delta_{X_{\mu}},q_{\mu})$, and  inequality \eqref{ineqDPP} may be rewritten as
\begin{align*}
V^{+,*}(x,q) \geq  \sup_{\nu' \in \CT^Y_m}\; &\inf_{\mu \in \CT^X}\; \inf_{p \in \Delta(K)}\\
& \Big(\langle x,p\rangle - \EE_{p,q}[ e^{-r \mu}h(\delta_{X_{\mu}},q_\mu)\indic_{\mu < \varepsilon} + e^{-r\varepsilon} \EE_{X_{\varepsilon},Y_{\varepsilon}}[\bar{J}(\mu'(\omega,.),\nu')]\indic_{\mu \geq \varepsilon}]\Big) .
\end{align*}
Defining
\[ \forall (\nu',p,q)\in \CT^Y_m\times \Delta(K)\times \Delta(L), \;F(\nu',p,q)= \sup_{\hat{\mu} \in \CT^X} \EE_{p,q}[\bar{J}(\hat{\mu},\nu')] \]
we have, using the same arguments as for \eqref{auseinander} :
\[F(\nu',p,q)=\sum_{\ell \in L}\sum_{k\in K} q^\ell p^k F(\nu',\delta_k,\delta_{\ell}).\]  
The previous inequality implies therefore
\begin{align}\label{ineqDPP1}
V^{+,*}(x,q) \geq \sup_{\nu' \in \CT^Y_m}\; &\inf_{\mu \in \CT^X}\; \inf_{p \in \Delta(K)}\nonumber\\
&    \Big(\langle x,p\rangle- \EE_{p,q}[ e^{-r \mu}h(\delta_{X_{\mu}},q_{\mu})\indic_{\mu < \varepsilon} + e^{-r\varepsilon} F(\nu',\delta_{X_{\varepsilon}}, \delta_{Y_{\varepsilon}})\indic_{\mu \geq \varepsilon}]\Big)
\end{align}
and taking conditional expectation with respect to $\CF^X_{\varepsilon}$, we obtain  
\begin{align*}
\EE_{p,q}[F(\nu', \delta_{X_{\varepsilon}}, \delta_{Y_{\varepsilon}})\indic_{\mu \geq \varepsilon} | \CF^X_{\varepsilon} ]=F(\nu', \delta_{X_{\varepsilon}}, q_\varepsilon)\indic_{\mu \geq \varepsilon}.
\end{align*}
Next, we apply the optional sampling theorem with the $\CF^X$-stopping time $\mu \wedge \varepsilon$ and obtain 
\begin{align*}\langle x,p\rangle &= \EE_{p,q}[ \langle x, e^{-\mu \tr R}\delta_{X_{\mu}} \rangle \indic_{\mu < \varepsilon} + \langle x, e^{-\varepsilon \tr R}\delta_{X_{{\varepsilon}}} \rangle \indic_{\mu \geq \varepsilon} ]\\
&= \EE_{p,q}[ \langle e^{-\mu R}x, \delta_{X_{\mu}}\rangle \indic_{\mu < \varepsilon} + \langle e^{-\varepsilon R}x,\delta_{X_{{\varepsilon}}}  \rangle \indic_{\mu \geq \varepsilon} ]. 
\end{align*}
Substituting the last two equalities in the right-hand side of \eqref{ineqDPP1} yields
\begin{align*}
\langle& x, p\rangle - \EE_{p,q}[ e^{-r \mu}h(\delta_{X_{\mu}},q_\mu)\indic_{\mu < \varepsilon} + e^{-r\varepsilon} F(\nu',\delta_{X_{{\varepsilon}}} , q_\varepsilon)\indic_{\mu \geq \varepsilon}]\\ 
 &= \EE_ {p,q}[ e^{-r \mu} (\langle e^{\mu(rI-R)}x, \delta_{X_{\mu}} \rangle - h(\delta_{X_{\mu}},q_\mu) )\indic_{\mu< \varepsilon} + e^{-r \varepsilon} ( \langle e^{\varepsilon(rI-R)}x, \delta_{X_{{\varepsilon}}} \rangle - F(\nu',\delta_{X_{{\varepsilon}}} , q_\varepsilon) ) \indic_{\mu \geq \varepsilon}]\\
&= \EE_ {p,q}[ e^{-r \mu} (\langle x_\mu, \delta_{X_{\mu}} \rangle - h(\delta_{X_{\mu}},q_\mu) )\indic_{\mu< \varepsilon} + e^{-r \varepsilon} ( \langle x_\varepsilon, \delta_{X_{{\varepsilon}}}\rangle - F(\nu',\delta_{X_{{\varepsilon}}} , q_\varepsilon) ) \indic_{\mu \geq \varepsilon}].
 \end{align*} 
Given any $\eta>0$, let us choose $\nu'$ as an $\eta$-optimal minimizer in the problem $V^{+,*}(x_{\varepsilon},q_\varepsilon)$ (note that these dynamics do not depend on $p$ or $\mu$), so that
\begin{align*}
\left( \langle x_\varepsilon, \delta_{X_{{\varepsilon}}} \rangle - F(\nu',\delta_{X_{{\varepsilon}}}, q_\varepsilon) \right) & \geq  \inf_{p' \in \Delta(K)}\left( \langle x_\varepsilon, p'\rangle - F(\nu',p', q_\varepsilon) \right) \\
&= \inf_{p' \in \Delta(K)}\inf_{\hat{\mu} \in \CT^X}\left( \langle x_\varepsilon, p' \rangle - \EE_{p',q_\varepsilon}[ \bar{J}(\hat{\mu},\nu')] \right) \\
&\geq   V^{+,*}(x_\varepsilon,q_\varepsilon)- \eta.
\end{align*}
Using the preceding results in (\ref{ineqDPP1}), we deduce that for all $\eta>0$
\begin{align*}
V^{+,*}&(x,q)\\
 &\geq \inf_{\mu \in \CT^X}\;\inf_{p \in \Delta(K)}\; \EE_{p,q}[ e^{-r \mu} (\langle x_\mu, \delta_{X_{\mu}}\rangle - h(\delta_{X_{\mu}},q_\mu) )\indic_{\mu< \varepsilon} + e^{-r \varepsilon}(V^{+,*}(x_\varepsilon,q_\varepsilon)-\eta) \indic_{\mu \geq \varepsilon}] \\
&\geq \inf_{\mu \in \CT^X}\;\inf_{p \in \Delta(K)}\; \EE_{p,q}[ e^{-r \mu} h^*(x_{\mu},q_\mu) \indic_{\mu< \varepsilon} + e^{-r \varepsilon}(V^{+,*}(x_\varepsilon,q_\varepsilon)-\eta) \indic_{\mu \geq \varepsilon}].
 \end{align*} 
Note that in the preceding expression, only $\mu$ is random, and thus we may replace the expectation with an integral with respect to the law of $\mu$ on $\overline{\RR}_+$ denoted $P_{p,\mu}$ which yields
\begin{align*}
V^{+,*}(x,q) 
&\geq \inf_{\mu \in \CT^X}\;\inf_{p \in \Delta(K)}\; \int_{\overline{\RR}_+} \Big(e^{-r t} h^*(x_{t},q_t) )\indic_{t< \varepsilon} + e^{-r \varepsilon}(V^{+,*}(x_\varepsilon,q_\varepsilon)-\eta) \indic_{t \geq \varepsilon}\Big) dP_{p,\mu}(t).
 \end{align*} 
Using the linearity of the integral and arguing as in Remark \ref{purifystoppingtimes}, the infimum over all admissible distributions for $\mu$ is equal to the infimum over Dirac masses (constant stopping times), and the conclusion follows by sending $\eta$ to zero.
\end{proof}

\subsection{Subsolution property for $V^+$}
We will prove the subsolution property for $V^+$ by establishing a super-solution property for $V^{+,*}$. The results rely on classical tools of convex analysis. 
\begin{notation}\label{differential}
Let $C \subset \RR^K$ and $D\subset \RR^L$ denote two convex sets. For any $g:C\times D \rightarrow \RR$ and $(x,y)\in C\times D$, we denote the sub-differential of $g$ with respect to the first variable $x$ by
\[\partial^-_1 g(x,y)=\{x^*\in \RR^K \,|\, \forall x'\in C , \; g(x,y)+\langle x^*, x'-x\rangle \leq g(x',y)\}.
\]
The super-differential $\partial^+_1g(x,y)$ is defined similarly.\\
We use the index $\partial_2^-$ (resp.  $\partial_2^+$), whenever we consider derivatives with respect to the second variable $y\in D$. We use $\partial^+$, $\partial^-$ for the full super- and sub-differential.
\end{notation}

\begin{proposition}\label{V*supersolbothsides2}
For all $(x,q)\in \RR^K\times \Delta(L)$, we have
\be\label{superdual}
\min\{(h^*-V^{+,*})(x,q) \,;\,  \vec{D}V^{+,*}(x,q ; (rI-R)x, \tr Q q)  - r V^{+*}(x,q) \}  \leq 0.
 \ee
\end{proposition}
\begin{proof} 
Recall that by Proposition \ref{DPPbothsides}, for all  $\varepsilon>0$
\begin{align}\label{DDPuse2}
V^{+,*}(x,q) &\geq \inf_{t \in[0,\varepsilon]}  \left(e^{-r t} h^*(x_t,q_t )\indic_{t< \varepsilon} + e^{-r \varepsilon}V^{+,*}(x_\varepsilon,q_\varepsilon) \indic_{t= \varepsilon}\right),
 \end{align}
where the dynamic $(x_t,q_t)$ is given by $x_t=x+\int_0^t  (rI-R)x_s ds$ and $q_t=q+\int_0^t {\tr Q } q_s ds$.
We know that $h^*(x,q)- V^{+,*}(x,q)\geq 0$ by construction. In case $h^*(x,q)- V^{+,*}(x,q)> 0$, there exists by continuity an $\tilde\varepsilon>0$ such that for all $0< \varepsilon \leq \tilde{\varepsilon}$, choosing  $t <\varepsilon$ would not be optimal in \eqref{DDPuse2}. Thus
\begin{align}
V^{+,*}(x,q) &\geq  e^{-r \varepsilon}V^{+,*}(x_{\varepsilon},q_{\varepsilon}).
\end{align}
We deduce that \eqref{superdual} holds since:
\[\vec{D}V^{+,*}(x,q ; (rI-R)x, \tr Q q)  - r V^{+,*}(x,q) =\lim_{\varepsilon \rightarrow 0} \frac{1}{\varepsilon}\left( e^{-r \varepsilon}V^{+,*}(x_{\varepsilon},q_{\varepsilon})-V^{+,*}(x,q)\right) \leq 0.\]
The last equality follows from the fact that $V^{+,*}$ is Lipschitz, implying that:
 \[ V^{+,*}(x_{\varepsilon},q_{\varepsilon})-V^{+,*}(x+ \varepsilon (rI-R)x,q+\varepsilon\tr Q q )= o(\varepsilon).\]
\end{proof}

\begin{proposition}\label{Vsubsolbothsides2}
$V^+$ is a subsolution of  \eqref{subpoint}. 
\end{proposition}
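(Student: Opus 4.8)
The two statements are symmetric — exchanging the two players, the obstacles $f$ and $h$, the variables $p$ and $q$, and replacing \eqref{superdual} and $V^{+,*}$ by \eqref{subdual} and $V^-_*$ turns one into the other — so I will only explain why $V^+$ satisfies \eqref{subpoint}. Fix $q\in\Delta(L)$ and $p\in Ext(V^+(\cdot,q))$. Lemma~\ref{propertiesV} gives $h(p,q)\le V^+(p,q)\le f(p,q)$; hence the last term of \eqref{subpoint} is $\le 0$, and \eqref{subpoint} holds trivially when $V^+(p,q)=h(p,q)$. Thus it suffices to prove that if $V^+(p,q)>h(p,q)$ then
\[ A(p,q):=rV^+(p,q)-\vec D_1 V^+(p,q;\tr R p)-\vec D_2 V^+(p,q;\tr Q q)\le 0. \]

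My plan is to read $A(p,q)\le 0$ off the supersolution inequality \eqref{superdual} for the concave conjugate $V^{+,*}(\cdot,q)=\inf_{p'}(\langle\cdot,p'\rangle-V^+(p',q))$ at a well chosen dual point $x$, through the Fenchel duality between $V^+(\cdot,q)$ and $V^{+,*}(\cdot,q)$. Since $V^+(\cdot,q)$ is concave and Lipschitz, $\vec D_1 V^+(p,q;\tr R p)=\min_{x'\in\partial^+_1 V^+(p,q)}\langle x',\tr R p\rangle$, and I want $x$ such that: (i) $p$ is the \emph{unique} minimizer of $p'\mapsto\langle x,p'\rangle-V^+(p',q)$ — equivalently $x\in\partial^+_1 V^+(p,q)$, $\langle x,p\rangle=V^+(p,q)+V^{+,*}(x,q)$, and no other point of $\Delta(K)$ realizes this value; (ii) $\langle x,\tr R p\rangle=\vec D_1 V^+(p,q;\tr R p)$, i.e. $x$ achieves that minimum. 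Producing such an $x$ — directly when $p$ is an \emph{exposed} point of $V^+(\cdot,q)$, and along a sequence $(x_n)$ attached to exposed points $p_n\to p$ in general — is exactly where extremality of $p$ is used. I expect this to be the main obstacle: an extreme point need not be exposed, and the directional derivatives of the saddle function $V^+$ are only lower/upper semicontinuous in $(p,q)$, so the limit along $p_n\to p$ has to be taken with care. I would isolate this as a convex-analysis lemma, in the spirit of the extreme-point lemmas of \cite{carda12,grunstopping}.

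Granting such an $x$, property (i) combined with $V^+(p,q)>h(p,q)$ yields $\langle x,p'\rangle-h(p',q)>V^{+,*}(x,q)$ for every $p'\in\Delta(K)$ (at $p'=p$ because $V^+>h$ there, at $p'\ne p$ because $h\le V^+$ and $p$ is the unique minimizer), hence $h^*(x,q)>V^{+,*}(x,q)$ by compactness of $\Delta(K)$. Then \eqref{superdual} forces the second term to be non-positive:
\[ \vec D V^{+,*}(x,q;(rI-R)x,\tr Q q)-rV^{+,*}(x,q)\le 0. \]

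It remains to bound the left-hand side below by $A(p,q)$. Let $(x_\varepsilon,q_\varepsilon)$ be the flow of Proposition~\ref{DPPbothsides} and let $p'_\varepsilon$ attain $V^{+,*}(x_\varepsilon,q_\varepsilon)$; by (i), $p'_\varepsilon\to p$ as $\varepsilon\to 0$. Using $p'_\varepsilon$ as a test point in the infimum defining $V^{+,*}(x,q)$,
\[ V^{+,*}(x_\varepsilon,q_\varepsilon)-V^{+,*}(x,q)\ \ge\ \langle x_\varepsilon-x,p'_\varepsilon\rangle-\big(V^+(p'_\varepsilon,q_\varepsilon)-V^+(p'_\varepsilon,q)\big). \]
Since $x_\varepsilon-x=\varepsilon(rI-R)x+o(\varepsilon)$ and $p'_\varepsilon\to p$, the Fenchel identity $\langle x,p\rangle=V^+(p,q)+V^{+,*}(x,q)$ and property (ii) give $\varepsilon^{-1}\langle x_\varepsilon-x,p'_\varepsilon\rangle\to r(V^+(p,q)+V^{+,*}(x,q))-\vec D_1 V^+(p,q;\tr R p)$; and convexity of $V^+$ in $q$ together with $p'_\varepsilon\to p$ give $\limsup_{\varepsilon\to0}\varepsilon^{-1}\big(V^+(p'_\varepsilon,q_\varepsilon)-V^+(p'_\varepsilon,q)\big)\le\vec D_2 V^+(p,q;\tr Q q)$. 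Hence $\vec D V^{+,*}(x,q;(rI-R)x,\tr Q q)-rV^{+,*}(x,q)\ge A(p,q)$, and with the previous display we conclude $A(p,q)\le 0$. When $p$ is extreme but not exposed, this computation is run along the approximating sequence and one passes to the limit, as discussed above. The supersolution property of $V^-$ is obtained by the symmetric argument from \eqref{subdual}.
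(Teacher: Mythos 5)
Your proposal follows essentially the same route as the paper: dualize via $V^{+,*}$, use exposedness of $p$ to get a unique Fenchel minimizer, deduce the strict inequality $h^*(x,q)>V^{+,*}(x,q)$, invoke the dual supersolution property (Proposition \ref{V*supersolbothsides2}) to kill the second term of \eqref{superdual}, and transfer the resulting directional-derivative inequality back to the primal; the extension from exposed to extreme points by density and semicontinuity is also the paper's final step. The only substantive difference is cosmetic: you compute $\vec D V^{+,*}$ by a direct first-order expansion with a test point $p'_\varepsilon$, whereas the paper identifies $\partial^+V^{+,*}$ exactly via the envelope theorem (Lemma \ref{danskinlemma}); both work, and your derivation of $h^*>V^{+,*}$ from uniqueness of the minimizer is arguably cleaner than the paper's contradiction argument through differentiability.

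One point you gloss over deserves attention: even at an \emph{exposed} point there need not exist a single $x$ satisfying both your requirements (i) and (ii). An exposing functional $\hat x$ gives uniqueness of the minimizer but has no reason to achieve $\min_{x'\in\partial_1^+V^+(p,q)}\langle x',\tr Rp\rangle$, and conversely the minimizing supergradient $\bar x$ from Lemma \ref{superdiff} need not expose $p$. The paper resolves this by working at $x_\varepsilon=\bar x+\varepsilon(\hat x-\bar x)$, which still exposes $p$ for every $\varepsilon>0$ (so (i) holds and $V^{+,*}(\cdot,\bar q)$ is differentiable there with gradient $p$), proving the inequality at $x_\varepsilon$, and only then letting $\varepsilon\to0$ to recover (ii). Your scheme accommodates this --- it is just one more limit on top of the $p_n\to p$ approximation you already plan --- but as written, the claim that such an $x$ exists ``directly'' at an exposed point is not justified and should be replaced by this perturbation device.
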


\begin{proof}
Again it is sufficient to prove only the subsolution property for $V^+$, as the proof of the supersolution property for $V^-$ is similar due to the symmetry of the problem. Since by construction $V^+(\bar{p},\bar{q}) \leq f(\bar{p},\bar{q})$ it remains to show that for $\bar{q}\in \Delta(L)$ and $\bar{p} \in Ext(V(.,\bar{q}))$  \[V^+(\bar{p},\bar{q})> h(\bar{p},\bar{q})\] implies
\be\label{ineqsubpoint} 
rV^+(\bar{p},\bar{q}) -\vec{D}_1V^+(\bar{p},\bar{q}; \tr R\bar{p}) - \vec{D}_2V^+(\bar{p},\bar{q} ; \tr Q \bar{q} ) \leq 0.
\ee 
We first assume that $\bar{p}$ is an exposed point of $V(.,\bar{q})$ (see Definition \ref{exposeddef}) and $V^{+}(\bar{p},\bar{q})>h(\bar{p},\bar{q})$.
We will reformulate \eqref{ineqsubpoint} using the conjugate function $ V^{+,*} $.
Let us choose $\bar{x} \in \partial^+_1V^+(\bar{p},\bar{q})$ and $\bar{y} \in \partial^-_2V^+(\bar{p},\bar{q})$ (see  Lemma \ref{superdiff}), such that
\[ \vec{D}_1V^+(\bar{p},\bar{q}; \tr R\bar{p})= \langle \bar{x}, \tr R \bar{p} \rangle, \;\;  \vec{D}_2V^+(\bar{p},\bar{q}; \tr Q\bar{q})= \langle \bar{y}, \tr Q \bar{q} \rangle.\]
By construction $V^+(\bar{p},\bar{q})=\langle \bar{x},\bar{p}  \rangle-V^{+,*}(\bar{x},\bar{q})$ and \eqref{ineqsubpoint} can be written as
\be \label{ineqsubpoint2}  
\langle (rI-R)\bar{x}, \bar{p} \rangle -\langle \tr Q \bar{q}, \bar{y} \rangle - r V^{+,*}(\bar{x},\bar{q}) \leq 0.
\ee
As $\bar{p}$ is an exposed point, we know that there exists some $\hat{x} \in \partial^+_1V^+(\bar{p},\bar{q})$ such that in the expression 
\[ V^{+,*}(\hat x,\bar q)= \inf_{p \in \Delta(K)} \langle \hat{x}, p \rangle - V^+(p,\bar{q}), \] 
the minimum is uniquely attained in $\bar{p}$. It follows that denoting $u:=\hat{x}-\bar{x}$, for all $\varepsilon>0$, the minimum in the expression 
\[  V^{+,*}(\bar{x}+\varepsilon u,\bar{q})=\inf_{p \in \Delta(K)} \langle \bar{x}+ \varepsilon u, p \rangle - V^+(p,\bar{q}), \] 
is uniquely attained in $\bar{p}$. Note that it may be that $\bar{x}=\hat{x}$ in which case $u=0$. 
Fenchel's lemma implies that the function $V^{+,*}(.,\bar{q})$ is differentiable at $\bar{x}+\varepsilon u$ with a gradient equal to $\bar{p}$ and we have 
\[ V^{+,*}(\bar{x}+\varepsilon u,\bar{q})= \langle \bar{x}+ \varepsilon u,\bar{p} \rangle - V^+(\bar{p},\bar{q}). \]
Instead of proving directly \eqref{ineqsubpoint2}, we will prove that for all $\varepsilon>0$
\be \label{ineqsubpoint3}  
\langle (rI-R)\bar{x}+\varepsilon u, \bar{p} \rangle -\langle \tr Q \bar{q}, \bar{y} \rangle - r V^{+,*}(\bar{x}+\varepsilon u,\bar{q}) \leq 0,
\ee
 \eqref{ineqsubpoint2} follows then by sending  $\varepsilon>0$ to zero.\p
In order to apply Proposition \ref{V*supersolbothsides2}, let us prove that 
$V^{+,*}(\bar{x}+\varepsilon u,\bar{q}) < h^*(\bar{x}+ \varepsilon u,\bar{q})$.
By construction, we have $V^{+,*}\leq h^*$ since $V^+ \geq h$. Assume by contradiction that $V^{+,*}(\bar{x}+\varepsilon u,\bar{q}) = h^*(\bar{x}+\varepsilon u,\bar{q})$. Both functions being concave with respect to their first argument and since $D_1V^{+,*}(\bar{x}+\varepsilon u,\bar{q})=\bar{p}$, we would have 
\[ D_1 h^*(\bar{x}+\varepsilon u,\bar{q})=D_1 V^{+,*}(\bar{x}+\varepsilon u,\bar{q})=\bar{p},\]
and therefore 
\[ V^+(\bar{p},\bar{q})=\langle \bar{x}+\varepsilon u,\bar{p} \rangle - V^{+,*}(\bar{x}+\varepsilon u,\bar{q}) =\langle \bar{x}+\varepsilon u,\bar{p} \rangle - h^*(\bar{x}+\varepsilon u,\bar{q})= h(\bar{p},\bar{q}),\]
which contradicts the assumption $V^+(\bar{p},\bar{q})-h(\bar{p},\bar{q})>0$.
\p
Note that $q \rightarrow V^+(\bar{p},q)$ is convex on $\Delta(L)$ and that $\bar{y}$ was chosen so that the directional derivative verifies (see Lemma \ref{superdiff} for the first equality) 
\[ \vec{D}_2 V^+(\bar{p},\bar{q} ; \tr Q\bar{q})=\max_{v \in \partial^-_2 V^+(\bar{p},\bar{q})} \langle v , \tr Q\bar{q} \rangle = \langle \bar{y} , \tr Q\bar{q} \rangle.\]
Since $V^{+,*}$ is a concave Lipschitz function on $\RR^K\times \Delta(L)$, the envelope theorem (see Lemma \ref{danskinlemma}) implies
\be \label{danskin2} \partial^+ V^{+,*}(\bar{x}+\varepsilon u,\bar{q})= \{ \bar{p}\} \times(-\partial^-_2V^+(\bar{p},\bar{q})).\ee
Indeed, note that the right-hand side of \eqref{danskin2} is the superdifferential of the concave function
\[ (x,q) \in \RR^K \times \Delta(L) \rightarrow \langle x, \bar{p} \rangle - V^+(\bar{p},q) ,\]
at $(\bar{x}+\varepsilon u, \bar{q})$. 
We deduce that the directional derivatives of $V^{+,*}$ at $(\bar{x}+\varepsilon u,\bar{q})$ verify
\begin{align*} 
\vec{D}V^{+,*}(\bar{x}+\varepsilon u,\bar{q}; (rI-R)\bar{x}, \tr Q \bar{q})& = \min_{(w,v) \in \partial^+ V^{+,*}(\bar{x}+\varepsilon u,\bar{q})} \langle w,(rI-R)\bar{x} \rangle + \langle v, \tr Q \bar{q} \rangle 
\\ &= \langle \bar{p},(rI-R)\bar{x} \rangle - \langle \bar{y}, \tr Q \bar{q} \rangle.
\end{align*} 
(\ref{ineqsubpoint3}) follows then from Proposition \ref{V*supersolbothsides2}.
\p
It remains to extend the result from exposed points to extreme points. We know that exposed points are a dense subset of extreme points (see Theorem 18.6 in \cite{rockafellar}). We may therefore use an approximation argument by using Lemma \ref{exposed}  applied to $p \rightarrow -V(., \bar{q})$ together with the fact that $p\rightarrow \vec{D}_2V^+(p,\bar{q})$ is upper semi-continuous (use Lemma \ref{superdiff} together with the fact that the correspondence of superdifferentials has closed graph). 
\end{proof}

\subsection{Symmetric properties for $V^-$}
{Let us briefly mention the properties satisfied by $V^-$ and its convex conjugate, as they will be used in the next section.  
Define the convex conjugate in $q$ of $V^{-}$ as 
\[\forall p\in \Delta(K), y \in \RR^L, \;  V^{-}_*(p,y):= \sup_{q\in \Delta(L)} \{\langle q,y \rangle - V^-(p,q)\}.\]
As in Remark \ref{purifystoppingtimes}, we have $h(p,q) \leq V^-(p,q) \leq f(p,q)$, and it follows that:
\be
\begin{array}{l}
f_*(p,y) \leq V^-_*(p,y) \leq h_*(p,y),\\
\end{array}
\ee
where the functions $h_*, f_*$ are defined as
\be
\begin{array}{l}
h_*(p,y):=\sup_{q \in \Delta(L)} \{\langle q,y \rangle -h(p,q)\},\;  f_*( p, y):=\sup_{q \in \Delta(L)} \{\langle q,y \rangle -f(p,q)\}.
\end{array}
\ee 
As in Proposition \ref{V*supersolbothsides2}, for all $(p,y)\in  \Delta(K)\times \RR^L$, we have
\be\label{subdual}
\max\{(f_*-V^{-}_*)(x,q) \,;\,  \vec{D}V^{-}_*(p,y ; \tr R p,(rI-Q)y)  - r V^{-}_*(p,y) \}  \geq 0.
\ee
As in Proposition \ref{Vsubsolbothsides2}, \eqref{subdual} implies that $V^-$ is a supersolution of  \eqref{superpoint}.}

\subsection{Comparison principle}\label{sectioncomparison}

In the previous section we showed that $V^+(p,q)$ is a subsolution to \eqref{subpoint} while $V^-(p,q)$ verifies the supersolution property \eqref{superpoint}. Since   $V^+(p,q)\geq V^-(p,q)$ by construction the following comparison principle will imply Theorem \ref{thmextremal}.
\p
Let us recall the classical definition of an extreme point for a convex set.
\begin{definition}\label{extremepoint}
Let $C \subset \RR^n$ a convex set. $x \in C$ is an extreme point of $C$ if for any $x_1,x_2 \in C$ and $\lambda \in [0,1]$:
\[ \lambda x_1 + (1-\lambda)x_2=x \Rightarrow x_1=x_2=x.\]
\end{definition}

\begin{theorem}\label{comparisonthm2}
Let $w_1,w_2$ be two Lipschitz continuous saddle functions defined on $\Delta(K)\times \Delta(L)$ such that $w_1$ verifies the subsolution property \eqref{subpoint} and $w_2$  verifies the supersolution property \eqref{superpoint}. Then $w_1 \leq w_2$.
\end{theorem}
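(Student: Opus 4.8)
The plan is to argue by contradiction, supposing $\sup_{\Delta(K)\times\Delta(L)}(w_1-w_2)=:M>0$, and then to exploit the saddle structure to locate a ``good'' maximizing point at which both the sub- and super-solution inequalities are available in the \emph{same} PDE. The subtlety is that \eqref{subpoint} only holds at points $p\in Ext(w_1(\cdot,q))$ while \eqref{superpoint} only holds at points $q\in Ext(w_2(p,\cdot))$, so a naive argument at an arbitrary maximizer of $w_1-w_2$ fails. First I would fix the $q$-variable and use the concavity of $w_1(\cdot,q)$ to write, for each $q$, the value $w_1(p,q)$ as a convex combination of its values at extreme points; a standard Choquet/Minkowski argument on the simplex shows that the maximum over $p$ of $w_1(p,q)-w_2(p,q)$ — which one first checks is attained since both functions are continuous on a compact set — can be pushed onto a point where $p$ is extreme for $w_1(\cdot,q)$, using that $w_2(\cdot,q)$ is concave too so $w_1-w_2$ restricted to a segment on which $w_1$ is affine is itself concave, hence maximized at an endpoint; iterating yields a maximizer $(p^*,q^*)$ with $p^*\in Ext(w_1(\cdot,q^*))$. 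Symmetrically, among all maximizers one selects $q^*\in Ext(w_2(p^*,\cdot))$; the point is that these two reductions are compatible because fixing $q^*$ first and then moving $p$ within the face does not change membership $q^*\in Ext(w_2(p,\cdot))$ — this compatibility is the technical heart and would need to be checked carefully, possibly by first reducing in $p$, then in $q$, and verifying the $p$-extremality survives.

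Once such a point $(p^*,q^*)$ is in hand, at $(p^*,q^*)$ we have both \eqref{subpoint} for $w_1$ and \eqref{superpoint} for $w_2$, i.e. the full obstacle equation \eqref{point} holds as a genuine inequality in each direction. I would then distinguish cases according to which term realizes the outer $\max$ and inner $\min$. If the obstacle terms are active, i.e. $w_1(p^*,q^*)=f(p^*,q^*)$ or $\le h(p^*,q^*)$ type relations, one combines them with $w_2(p^*,q^*)\ge$ the opposite obstacle inequality and with $h\le f$ to force $w_1(p^*,q^*)\le w_2(p^*,q^*)$, contradicting $M>0$. In the remaining ``PDE regime'' one has, at $(p^*,q^*)$,
\[
rw_1-\vec{D}_1w_1(p^*,q^*;\tr Rp^*)-\vec{D}_2w_1(p^*,q^*;\tr Qq^*)\le 0
\]
and the reverse inequality for $w_2$; subtracting and using $r>0$ together with $(w_1-w_2)(p^*,q^*)=M$ gives
\[
rM\le \vec{D}_1(w_1-w_2)(p^*,q^*;\tr Rp^*)+\vec{D}_2(w_1-w_2)(p^*,q^*;\tr Qq^*).
\]
The key observation is then that $p^*\mapsto(w_1-w_2)(p^*,q^*)$ attains an interior-of-its-effective-domain maximum along the direction $\tr Rp^*$, which is tangent to $\Delta(K)$ (since $\tr Rp^*$ has zero coordinate-sum and the process stays in the simplex), so the relevant directional derivative of a concave function at its maximum is $\le 0$; similarly the $q$-direction $\tr Qq^*$ is tangent to $\Delta(L)$ and $w_1-w_2$ is \emph{convex} along the face on which $q^*$ lives, but one arranges via the extremality selection that $(w_1-w_2)(p^*,\cdot)$ is still maximized at $q^*$, giving $\vec{D}_2(w_1-w_2)(p^*,q^*;\tr Qq^*)\le 0$ as well. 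Hence $rM\le 0$, contradicting $M>0$.

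The main obstacle I anticipate is precisely the simultaneous reduction to a point that is extreme for $w_1$ in $p$ \emph{and} extreme for $w_2$ in $q$, while keeping it a maximizer of $w_1-w_2$: this requires a careful two-stage argument exploiting that $w_1-w_2$ is concave in $p$ and convex in $q$ (as a difference of a concave-convex and a concave-convex function it is concave in $p$, convex in $q$), that maxima of a concave function on a polytope are attained at extreme points of the face where it is constant, and a continuity/closed-graph property of the directional-derivative maps (available from Lemma \ref{superdiff} as invoked in the proof of Proposition \ref{Vsubsolbothsides2}) to pass from the dense set of exposed points to all extreme points. I would also need the elementary but essential fact that $\tr Rp$ and $\tr Qq$ are tangent to the simplices, so that the directional derivatives appearing in \eqref{subpoint}–\eqref{superpoint} are ``internal'' and the sign analysis at a maximizer goes through.
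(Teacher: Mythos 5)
Your overall strategy --- argue by contradiction, produce a maximizer of $w_1-w_2$ that is simultaneously extreme for $w_1(\cdot,q)$ in $p$ and for $w_2(p,\cdot)$ in $q$, dispose of the obstacle cases using $w_1\le f$, $w_2\ge h$ and $h\le f$, and then contradict maximality via the directional derivatives in the tangent directions $\tr R p$ and $\tr Q q$ --- is exactly the paper's, and the last two steps of your plan are correct as written (modulo a harmless slip: on a segment where $w_1(\cdot,q)$ is affine, $w_1-w_2$ is \emph{convex}, not concave, which is precisely why its maximum is attained at an endpoint; and at the maximizer no concavity of $w_1-w_2$ is needed to get $\vec{D}(w_1-w_2)\le 0$ in an admissible direction, only the existence of the directional derivative and the fact that $\tr R\bar p$, $\tr Q\bar q$ point into the simplices).

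The genuine gap is the step you yourself flag as the technical heart: the simultaneous extremality selection. Your proposed justification --- that after fixing $q^*$ and moving $p$ within a face of $w_1(\cdot,q^*)$ the membership $q^*\in Ext(w_2(p,\cdot))$ is preserved --- is false in general: $Ext(w_2(p,\cdot))$ depends on $p$, and there is no reason for the affine pieces of $w_2(p,\cdot)$ to be stable as $p$ varies. Consequently a sequential reduction (first in $p$, then in $q$, then again in $p$, \dots) need not terminate at a point enjoying both properties. The paper's resolution is a one-shot argument: let $C$ be the compact set of maximizers of $w_1-w_2$ and take $(\bar p,\bar q)$ an extreme point of $\mathrm{conv}(C)$ (Krein--Milman), which belongs to $C$. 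If $\bar p=\lambda p_1+(1-\lambda)p_2$ with $w_1(\bar p,\bar q)=\lambda w_1(p_1,\bar q)+(1-\lambda)w_1(p_2,\bar q)$, then concavity of $w_2(\cdot,\bar q)$ gives $\lambda(w_1-w_2)(p_1,\bar q)+(1-\lambda)(w_1-w_2)(p_2,\bar q)\ge M$, so both $(p_i,\bar q)$ lie in $C$ and extremality of $(\bar p,\bar q)$ in $\mathrm{conv}(C)$ forces $p_1=p_2=\bar p$; the symmetric argument in $q$ uses convexity of $w_1(\bar p,\cdot)$. This single device yields $\bar p\in Ext(w_1(\cdot,\bar q))$ and $\bar q\in Ext(w_2(\bar p,\cdot))$ at the same maximizer, which is precisely what your plan is missing; with it, the rest of your argument goes through.
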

\begin{proof}
We proceed by contradiction. Assume that $M:=\max_{\Delta(K)\times \Delta(L)} w_1-w_2 >0$ and let $C$ denote the compact set of $(p,q)\in \Delta(K)\times \Delta(L)$  where the maximum is reached. Let $(\bar{p},\bar{q})\in C$ denote an extreme point of the convex hull of $C$, which exists by Krein-Milman theorem and belongs to $C$ by definition of the convex hull. It follows that $\bar{p}$ is an extreme point of $w_1(\cdot,\bar{q})$ and that $\bar{q}$ is an extreme point of $w_1(\bar{p},.)$.

Let us prove this property for $\bar{p}$ (the case of $\bar{q}$ being symmetric).   
Assume that there exists $p_1,p_2\in \Delta(K)$ and $\lambda\in (0,1)$ such that $\bar{p}=\lambda p_1 +(1-\lambda)p_2$ and $\lambda w_1(p_1,\bar{q})+(1-\lambda)w_1(p_2,\bar{q})=w_1(\bar{p},\bar{q})$. Using that $w_2(.,\bar{q})$ is concave, we would have
\[\lambda (w_1(p_1,\bar{q})- w_2(p_1,\bar{q}))+(1-\lambda)(w_1(p_2,\bar{q})-w_2(p_2,\bar{q})) \geq w_1(\bar{p},\bar{q}) -w_2(\bar{p},\bar{q})=M. \]
As $w_1-w_2 \leq M$, we deduce that $(p_1,\bar{q})$ and $(p_2,\bar{q})$ belong to $C$ and therefore that $p_1=p_2$.
\p 
At point $(\bar{p},\bar{q})$, we have $f(\bar{p},\bar{q}) \geq w_1(\bar{p},\bar{q})> w_2(\bar{p},\bar{q})\geq h(\bar{p},\bar{q})$ so that 
\[ rw_1(\bar{p},\bar{q}) -\vec{D}_1 w_1(\bar{p},\bar{q}; \tr R\bar{p}) - \vec{D}_2 w_1(\bar{p},\bar{q} ; \tr Q \bar{q} ) \leq 0,\]
\[ rw_2(\bar{p},\bar{q}) -\vec{D}_1 w_2(\bar{p},\bar{q}; \tr R\bar{p}) - \vec{D}_2 w_2(\bar{p},\bar{q} ; \tr Q \bar{q} ) \geq 0.\]
Note that $\tr Rp$ (resp. $\tr Qq$) always belong to the tangent cone of $\Delta(K)$ at $p$ (resp. of $\Delta(L)$ at $q$) so that directional derivatives are well-defined and real-valued. We deduce that 
\begin{align*}
\vec{D}_1 w_1(\bar{p},\bar{q}; \tr R\bar{p}) + \vec{D}_2 w_1(\bar{p},\bar{q} ; \tr Q \bar{q} ) &\geq rw_1(\bar{p},\bar{q})  \geq  rw_2(\bar{p},\bar{q})+rM \\ &\geq \vec{D}_1 w_2(\bar{p},\bar{q}; \tr R\bar{p}) + \vec{D}_2 w_2(\bar{p},\bar{q} ; \tr Q \bar{q}) +rM.
\end{align*}
It follows that one of the following inequalities holds true:
\[ \vec{D}_1 w_1(\bar{p},\bar{q}; \tr R\bar{p})>\vec{D}_1 w_2(\bar{p},\bar{q}; \tr R\bar{p})\] 
\[ \vec{D}_2 w_1(\bar{p},\bar{q} ; \tr Q \bar{q} ) > \vec{D}_2 w_2(\bar{p},\bar{q} ; \tr Q \bar{q}).\]
In the first case, this would imply that for a sufficiently small $\varepsilon$,
\[ w_1(\bar{p}+\varepsilon \tr R \bar{p},\bar{q}) - w_2(\bar{p}+\varepsilon \tr R \bar{p},\bar{q}) > w_1(\bar{p},\bar{q}) - w_2(\bar{p},\bar{q})=M,\]
and thus a contradiction. The second case is similar and this concludes the proof.
\end{proof}

\section{Optimal stopping times}\label{sectionverif}

Using the PDE characterization of the value function $V:\Delta(K)\times\Delta(L)\rightarrow\mathbb{R}$ by Theorem \ref{thmextremal}, it is possible to give a verification theorem for mixed stopping times for both players. Here we present the characterization of optimal stopping times $\mu$ for player 1, i.e. the player observing $X$. By symmetry of the problem the result for player 2 is given in a similar way.\\

{Our construction being quite technical, we provide below a heuristic description of our result which emphasizes the relations with the existing literature on dynamic games with incomplete information.} In order to characterize optimal stopping times $\mu$ for player 1, we introduce the belief process of the uninformed player over $X$.  Let us fix a mixed stopping time $\mu \in \CT^X_m$. Despite the fact that player 2 has no information on $X$, he can compute for any $t\geq0$ the conditional distribution $\pi_t$ (defined below) of $X_t$ given the event that player 1 did not stop before time $t$. To that end we consider the product probability space 
\[(\Omega',\CF',\PP'_{p,q}):=(\Omega_X \times \Omega_Y \times [0,1], \CF^X_{\infty}\otimes \CF^Y_\infty \otimes \CB([0,1]), \PP_{p,q}\otimes \Leb),\]
where $\Leb$ stands for the Lebesgue measure. The stopping time $\mu(\omega_X,u)$ is thus seen as a random variable defined on $\Omega'$.\\
We define the belief process $\pi$ taking values in $\Delta(K)$ as a c\`{a}dl\`{a}g version of:
\be\label{pi}
 (\PP'_{p,q} [ X_t=k | \CH^{\mu}_t] )_{k\in K},\; t\geq 0,
 \ee
where $ \CH^{\mu}$ is the usual right-continuous augmentation of  $\sigma(\indic_{\mu \leq s}, 0\leq s \leq t) $.
By construction, the process $\pi$ has the following property: 
\be\label{martprop}
\forall \; 0 \leq s \leq t, \; \EE_{\PP'_{p,q}}[ \pi_t | \CF^\pi_s]= e^{(t-s)\tr R}\pi_s.
\ee 

{Let us call a belief process optimal if there exists an optimal stopping time of player $1$ inducing this belief process. It is classical in the literature on zero-sum dynamic games with incomplete information to search for conditions ensuring that a belief process is optimal. This idea appears explicitly in De Meyer \cite{demeyergeb}, Cardaliaguet-Rainer \cite{cardaexemple}, Gr\"un \cite{grunstopping,grungirsanov} and more recently in Cardaliaguet {\it et al.} \cite{cardaetal}. Our main result has the same flavor as Theorem 4.11 in \cite{cardaexemple} where a set of sufficient conditions is given for a belief process to be optimal. Note however that all these works were dealing with games with incomplete information on one side. In order to adapt these techniques to our model with incomplete information on both sides, we introduce a virtual dual process $\xi$ taking values in $\RR^L$, and consider the extended belief process $Z=(\pi,\xi)$. This idea was already present in the work of Heuer  \cite{heuer}, in which (adapting the notation to that of our model) the strategy of player 1 was based on a virtual belief process $q_t$ about the unknown state $Y_t$ of player $2$ and on the corresponding dual variable $\xi_t \in \partial^-_2 V_*(\pi_t,q_t)$. We work here directly with the dual process $\xi$ which is controlled by player $1$. Precisely, the process $\xi$ is c\`{a}dl\`{a}g, $\CF^\pi$-measurable and has to satisfy the property
\be\label{martprop2}
\forall \; 0 \leq s \leq t, \; \EE_{\PP'_{p,q}}[ \xi_t | \CF^\pi_s]= e^{(t-s)(rI-Q)}\xi_s,
\ee
which is related to the variational inequality \eqref{subdual} satisfied by $V_*$. The process $Z$ is therefore a pure jump process.
We now consider the following two closed subsets of $\Delta(K)\times \RR^L$ 
\[\mathcal{H}:=\{(p,y)\in\Delta(K)\times \RR^L \,|\, rV_*(p,y) - \vec{D}V_*(p,y; \tr R p, (rI-Q)y) \geq 0 \},\]
\[ \mathcal{S}:=\{(p,y)\in\Delta(K)\times \RR^L \,|\, V_*(p,y)=h_*(p,y)\}.\]
The set $\mathcal{H}$ is related to the set called non-revealing set in \cite{cardaexemple}. 
Let us first describe informally how player $1$ can control the process $Z$, which basically reduces to three possible behaviors:
\begin{itemize}
\item Player $1$ does not stop during a time interval $[t,t+dt]$, and  thanks to the conditions \eqref{martprop}, \eqref{martprop2}, the process $Z$ follows the deterministic dynamic $dZ_t=AZ_tdt$ where 
\[A=\begin{pmatrix}  \tr R & 0 \\ 0 & rI-Q \end{pmatrix}.\] 
\item Player $1$ stops with positive probability at some time $t$, which induces a jump of the belief process $\pi$, centered in $\pi_{t-}$, and such that $\pi_t$ takes at most two different values corresponding respectively to the events that player $1$ did stop and did not stop at time $t$. The process $\xi$ may also have a jump at time $t$, centered in $\xi_{t-}$ and having at most two endpoints corresponding to the same events, however the endpoints of this jump can be chosen arbitrarily. 
\item If player $1$ stops with positive intensity $\rho$ (which may depend on $X_t$) during some time interval $[t,t+dt]$, then conditionally on the fact that player $1$ did not stop, the process $\pi$ follows a deterministic continuous trajectory, and jumps only when player $1$ stops. The behavior of $\xi$ is similar, but the deterministic trajectory as well as the jump are not determined by the stopping strategy of player $1$ and can be chosen arbitrarily among those satisfying \eqref{martprop2}.  
\end{itemize}
Taking into account the stationarity of the model, we reduce the analysis to Markovian processes $Z$ having the above-mentioned properties, which are called piecewise deterministic Markov process (PDMP) and have been introduced by Davis \cite{Davis}.
Precisely, we now focus on stopping times $\mu$ which induce a belief process $\pi$ that can be extended on the time interval $[0,\mu]$ to a PDMP $Z=(\pi,\xi)$ in the family formally described below.}
{An informal statement of our main theorem is the following: $\mu$ is an optimal stopping time in the game with initial probabilities $(p,q)$ if the induced belief process $\pi$ can be extended to a process $Z=(\pi,\xi)$ which is a PDMP on the time interval $[0,\mu]$, with state space $E \subset \CH \cup \CS$, random initial condition $Z_{0}$ and such that 
\begin{itemize}
\item $Z_{0-}:=\EE[Z_0]=(p,y)$ with $y \in \partial_2^- V_*(p,q)$,
\item $Z_t \in \CH$ on $\{t<\mu\}$ and $Z_\mu \in \CS$, 
\item jumps may occur only at times $0$ and $\mu$ and over the flat parts of $V_*$,
\item $V_*$ is regular enough.
\end{itemize}
The conditions that the belief process has to stay in the set $\CH$ and can only jump over the flat parts of the value function were already present in theorem 4.11 in \cite{cardaexemple}. Note however that we consider here the conjugate value $V_*$ so that that both the set $\CH$ and the definition of flat parts depend on the additional variable $\xi$. Another important difference is the regularity assumption: in \cite{cardaexemple}, the value function was assumed to be $C^2$ with respect to the state variable in order to apply It\^{o} formula, whereas we only assume here a very weak differentiability condition (see \eqref{conederivative} below). Note that such a weak condition is necessary in order to apply our theorem to the examples studied in section \ref{sectionexample} where $V_*$ is not globally $C^1$.} 
\p
Let us now describe precisely the type of PDMP we will consider (see Davis \cite{Davis} for more details).
Let $E$ denote a closed subset of $\Delta(K)\times \RR^L$ and let $(\alpha,\lambda,\phi)$ be given with:
\begin{itemize}
\item $\alpha: E \rightarrow \RR^K\times \RR^L$:  locally bounded measurable vector field such that for all initial point $z \in E$, there exists a unique global solution of $w_z'(t)=\alpha(w_z(t))$ with initial condition $w_z(0)=z$ which stays in $E$. 
\item $\lambda: E \rightarrow \RR_+$: locally bounded measurable intensity function. 
\item $\phi: E \rightarrow E$: locally bounded measurable jump function such that for all $z\in E$, $\phi(z)\neq z$ 
\item We assume in addition that for all $z\in E$, the maps $t \rightarrow \alpha(w_z(t)), \lambda(w_z(t)), \phi(w_z(t))$ are c\`{a}dl\`{a}g.
\end{itemize}
The construction of the PDMP $Z$ with characteristics $(\alpha,\lambda,\phi)$ and initial position $Z_0=z$ is as follows: 
Let $T_1$ denote a non-negative random variable such that $\PP(T_1 >t)= \exp(-\int_0^t \lambda(w_z(s) )ds)$.
Then, define $Z_t=w_z(t)$ for $t \in [0,T_1)$ and $Z_{T_1}=\phi(w_z(T_1))$. For $k\geq 2$, construct by induction the variables $T_k-T_{k-1}$, $Z_{T_k}$, and the process $Z$ on $[T_{k-1}, T_k]$ using the same method where  $z$ is replaced by $Z_{T_{k-1}}$.
\p
The process $Z$ has locally Lipschitz trajectories on the intervals $[T_{k-1}, T_k)$ and jumps at times $T_k$.
We also assume that for all $z\in E$ and all $t \geq 0$, $\EE[ \sum_{s \leq t} |Z_s- Z_{s-}| ]< \infty$. 
This condition ensures that the sequence $T_k$ goes to $+\infty$ so that the process $Z$ is well-defined for all times.
\p
We now introduce a set of assumptions, called structure conditions on $E$ and $(\alpha,\lambda,\phi)$ that will be used in our verification theorem below. 

\begin{definition} \label{structure} 
We say that $E$ and $(\alpha,\lambda,\phi)$ fulfill the structure conditions $(SC)$ if
\begin{itemize}
\item[(SC1)] $E= E_\CH \cup \CS$ with $E_\CH$ a nonempty closed subset of $\CH$, and $E_{\CH} \cap \CS=\emptyset$.
\item[(SC2)] For all $z\in \CS$, $\lambda(z)=0$ and $\alpha(z)=0$. For all $z\in E_\CH$, for all $t\geq 0$, $w_z(t)\in E_\CH$.
\item[(SC3)] $\phi: E_\CH \rightarrow \CS$ and for all $z\in E_\CH$, if we denote $\phi(z)=(\phi_p(z),\phi_y(z))$, then 
\[ \{ k \in K | p_k=0\} \subset \{ k \in K |\phi_p(z)_k=0 \}.\] 
\item[(SC4)] For all $z \in E_\CH$:
\be \label{flatjump}
\lambda(z)\Big(V_*(\phi(z))-V_*(z)-\vec{D}V_*(z;\phi(z)-z)\Big)=0 .
\ee
\be \label{conederivative}
\vec{D}V_*(z;\alpha(z))+\vec{D}V_*(z;\lambda(z)(\phi(z)-z)) =\vec{D}V_*\left(z;\alpha(z)+\lambda(z)(\phi(z)-z)\right).
\ee
\be \label{structuredynamic}
\alpha(z)+  \lambda(z)(\phi(z)-z)= A z \;\text{with}\; A=\begin{pmatrix}  \tr R & 0 \\ 0 & rI-Q \end{pmatrix}
\ee
\item[(SC5)] For all $z\in (\Delta(K)\times \RR^L) \setminus E$, there exists $z' \in E_\CH$, $z'' \in \CS$ and $m\in [0,1]$ such that  
\[ z= (1-m)z' +mz'', \; V_*(z)= (1-m)V_*(z')+m V_*(z'').\]
\end{itemize}
\end{definition}
\p
\textbf{Comments:}  
\p
\begin{itemize}
\item Condition $(SC2)$ means that all points in $\CS$ are absorbing and that $E_\CH$ is invariant by the flow associated to the ODE $w'=\alpha(w)$.
\item Condition $(SC3)$ means that the direction of jump is deterministic and that any jump should start in $E_\CH$ and end in $\CS$. $(SC2)$ and $(SC3)$ together ensure that the PDMP $Z$ with characteristics $(\alpha,\lambda,\phi)$ is well-defined and integrable for any starting point in $E$. 
\item Condition \eqref{flatjump} states that jumps occur only over the flat parts of the graph of $V_*$.
\item Since $V_*$ is convex, condition \eqref{conederivative} means that for all $z$, $V_*$ is differentiable at $z$ in the direction of  the cone generated by $\alpha(z)$ and the vector $\lambda(z)(\phi(z)-z)$. Note that this is automatically true when $\lambda(z)=0$ as $V_*$ admits directional derivatives. 
\item Condition \eqref{structuredynamic} was designed in such a way that if $Z$ is a PDMP with characteristics $(\alpha,\lambda,\phi)$ with $Z_0=z\in \CH$, then
\be \label{structurexi} \forall t \geq 0, \;\EE[Z_{t\wedge T_1}]= \EE[ \int_{0}^{t\wedge T_1} A Z_r dr ].\ee
Indeed, according to Theorem 5.5 in Davis \cite{Davis}, the identity map belongs to the extended generator of the process $Z$ so that Dynkin's formula applies, implying that:
\begin{align*}
\EE[Z_{t\wedge T_1} ] &= \EE[\int_{0}^{t\wedge T_1} \left(\alpha(Z_r)+ \lambda(Z_r)(\phi(Z_r)-Z_r) \right) dr ]\\
&=\EE[ \int_{0}^{t\wedge T_1} A Z_r dr ].
\end{align*}
\item Condition $(SC5)$ means that for any point $z$ outside of $E$, there exists a random initial condition $Z_0$ for the process, such that $Z_0$ takes only two values, one in $\CS$ and one in $E_\CH$, such that $\EE[Z_0]=z$ and $\EE[V_*(Z_0)]=V_*(z)$. With the convention $Z_{0-}:=z$, this last equality means that the process jumps at time $0$ over a flat part of $V_*$.
\end{itemize}


\p
We are now ready to state our verification Theorem.
\begin{theorem}\label{suffcientbothsides}
Let $(p,q) \in \Delta(K)\times \Delta(L)$ and $y \in \partial_2^- V(p,q)$. If there exists characteristics $(\alpha,\lambda,\phi)$ fulfilling conditions $(SC)$, then the stopping time $\mu \in \CT^X_m$ defined below is optimal. 
\p
\textbf{Case 1:} If $z=(p,y) \in E_\CH$, let us denote $z_t=(p_t,y_t)=w_z(t)$. \\
Recall that $(\omega,u)$ denotes the canonical element in $\Omega'$ and that $u$ is independent of $\omega$ and uniformly distributed over $[0,1]$. Let us consider a sequence of independent variables $(U_n(u))_{n \geq 0}$ uniformly distributed over $[0,1]$.
Given $t\geq 0$ and $k\in K$, define as a measurable function of $\tilde{u} \in  [0,1]$ the unique non-decreasing map  $M(t,k)(\tilde{u})$ with values in $[0,+\infty]$ such that:
\[ \int_0^1 \indic_{\{M(t,k)(\tilde{u}) \geq x\}} d\tilde{u} = \exp(-\int_t^{t+x} \frac{\phi_p(z_{s})_k}{(p_{s})_k} \lambda(z_{s})ds ),\]
with the convention $\frac{0}{0}=0$. 
Let $(S_n)_{n \geq 1}$ denote the sequence of jumps of the Markov chain $X$ and let $S_0=0$. 
Define, with the convention $\inf\emptyset=+\infty$:
\[ \mu_n(\omega,u)  = \inf \{t \in [S_n, S_{n+1}) \,|\,  t - S_n \geq M(S_n,X_{S_n})(U_n(u)) \}.\]
Then $\mu_z(\omega,u)=\inf_{n\geq 0} \mu_n(\omega,u)$ is optimal. 
\p
\textbf{Case 2:} If $z \notin E$, let $z'=(p',y')$, $z''=(p'',y'')$ and $m \in (0,1)$ be given by condition $(SC5)$. Then an optimal stopping time is given by: 
\[ \mu(\omega,u)= 0. \indic_{u \leq m, \; X_0 \in supp(p')} + \mu_{z'}(\omega, \frac{u-m}{1-m})\indic_{u>m},\]
where $\mu_{z'}$ is the stopping time constructed in case 1 and $supp(p')$ denotes the support of $p'$.
\p
\textbf{Case 3:} If $z \in \CS$, $\mu=0$ is optimal.
\end{theorem}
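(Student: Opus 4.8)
The plan is to prove optimality of $\mu$ for Player~1 by showing that $\EE_{p,q}[\bar J(\mu,\nu)]\ge V(p,q)$ for every $\nu\in\CT^Y$; by Remark~\ref{purifystoppingtimes} (and the routine extension to $\CT^Y_r$ as in that remark) this yields $\inf_{\nu}\EE_{p,q}[\bar J(\mu,\nu)]\ge V(p,q)$, and the reverse inequality is automatic since $V=V^-=\sup_{\mu'}\inf_\nu$ by Theorem~\ref{thmextremal}. Throughout I write $V_*:=V^-_*$ and $z_t:=w_z(t)=(\pi_t,\xi_t)$; since $V(p,\cdot)$ is convex and $y\in\partial_2^-V(p,q)$, Fenchel's equality gives $V(p,q)=\langle q,y\rangle-V_*(p,y)$. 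I handle Case~1 first. The key preliminary fact (Step~1) is the identification of the belief process induced by $\mu_z$: the maps $M(t,k)$ are built so that, conditionally on $\{X_s=k\}$ for $s\in[S_n,S_{n+1})$ and on $\{\mu_z>S_n\}$, the residual clock has hazard rate $\rho_k(s):=\phi_p(z_s)_k\lambda(z_s)/(p_s)_k$ (with $\tfrac00=0$, legitimate by (SC3)). One checks that $\mu_z(\cdot,u)\in\CT^X$ for each $u$ and that $\mu_z$ is jointly measurable, so $\mu_z\in\CT^X_r$, and that $\PP'_{p,q}(\mu_z>t\mid X)=\exp(-\int_0^t\rho_{X_s}(s)\,ds)$. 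The survival filter $t\mapsto\EE[\delta_{X_t}\mid\mu_z>t]$ then solves the Wonham equation $\dot\pi=\tr R\pi-\mathrm{diag}(\rho(t))\pi+\pi\langle\rho(t),\pi\rangle$ of a chain killed at state-dependent rate $\rho(t)$; because of \eqref{structuredynamic} this equation is solved by $\pi_t=p_t$, which is therefore, by uniqueness, the belief process \eqref{pi}. Hence the unconditional stopping rate equals $\lambda(z_t)$ (as $\sum_k\phi_p(z_t)_k=1$), the conditional law of $X_{\mu_z}$ given $\mu_z$ is $\phi_p(z_{\mu_z})$, and the enlarged process $Z=(\pi,\xi)$ is the PDMP with characteristics $(\alpha,\lambda,\phi)$ started at $z$; by (SC2)--(SC3) it has at most one jump, at $T_1=\mu_z$, sending $z_{\mu_z}\in E_\CH$ to $\phi(z_{\mu_z})\in\CS$, so it is well defined and bounded.

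\textbf{Step 2: two auxiliary (super)martingales.} Fix $\nu\in\CT^Y$. Using the $\xi$-block of \eqref{structuredynamic}, the independence of $Y$ from $(X,u)$, Dynkin's formula for the chain $Y$ and identity \eqref{structurexi} for $Z$ (Theorem~5.5 in \cite{Davis}), I would show that
\[ M_t:=e^{-r(t\wedge\mu_z)}\,(\xi_{t\wedge\mu_z})_{Y_{t\wedge\mu_z}} \]
is a martingale for the joint filtration: on $\{t<\mu_z\}$ its drift equals $-e^{-rt}\lambda(z_t)\big((\phi_\xi(z_t))_{Y_t}-(\xi_t)_{Y_t}\big)$, which is exactly the compensator (with opposite sign) of the jump of $\xi$ at $\mu_z$. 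Hence $\EE[M_{\mu_z\wedge\nu}]=M_0=\langle q,y\rangle$. Next, since $t\mapsto V_*(w_z(t))$ is Lipschitz, $V_*(Z_t)=V_*(z)+\int_0^t\vec D V_*(Z_s;\alpha(Z_s))\,ds$ on $\{t<\mu_z\}$; on $E_\CH\subset\CH$ the defining inequality of $\CH$ (which is the content of \eqref{subdual}) gives $\vec D V_*(z;\tr R\pi,(rI-Q)\xi)=\vec D V_*(z;Az)\le rV_*(z)$, and \eqref{conederivative} upgrades the sublinearity of $v\mapsto\vec D V_*(z;v)$ into $\vec D V_*(z;Az)=\vec D V_*(z;\alpha(z))+\vec D V_*(z;\lambda(z)(\phi(z)-z))$, which by positive homogeneity and \eqref{flatjump} equals $\vec D V_*(z;\alpha(z))+\lambda(z)(V_*(\phi(z))-V_*(z))$. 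Therefore $\vec D V_*(Z_s;\alpha(Z_s))-rV_*(Z_s)\le-\lambda(Z_s)(V_*(\phi(Z_s))-V_*(Z_s))$, i.e. the flow-drift of $e^{-rt}V_*(Z_t)$ dominates (with opposite sign) the compensator of its jump at $\mu_z$; hence $\Pi_t:=e^{-r(t\wedge\mu_z)}V_*(Z_{t\wedge\mu_z})$ is a supermartingale and $\EE[\Pi_{\mu_z\wedge\nu}]\le\Pi_0=V_*(p,y)$.

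\textbf{Step 3: conclusion, and Cases 2--3.} Conditioning on what Player~2 observes (i.e. on $\CF^Y$ together with $\CH^{\mu_z}$, suitably stopped) and using that $f,h$ are affine in the state variable, Step~1 yields $\EE[e^{-r\nu}f(X_\nu,Y_\nu)\indic_{\nu<\mu_z}]=\EE[e^{-r\nu}f(p_\nu,Y_\nu)\indic_{\nu<\mu_z}]$ and $\EE[e^{-r\mu_z}h(X_{\mu_z},Y_{\mu_z})\indic_{\mu_z\le\nu}]=\EE[e^{-r\mu_z}h(\phi_p(z_{\mu_z}),Y_{\mu_z})\indic_{\mu_z\le\nu}]$. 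Fenchel--Young for $f$ together with $f_*\le V_*$ (Lemma~\ref{propertiesV}) gives $f(p_\nu,Y_\nu)\ge(\xi_\nu)_{Y_\nu}-V_*(z_\nu)$, while Fenchel--Young for $h$ together with $\phi(z_{\mu_z})\in\CS$ (so $V_*(\phi(z_{\mu_z}))=h_*(\phi(z_{\mu_z}))$) gives $h(\phi_p(z_{\mu_z}),Y_{\mu_z})\ge(\phi_\xi(z_{\mu_z}))_{Y_{\mu_z}}-V_*(\phi(z_{\mu_z}))$. Summing the two resulting estimates and recognizing $\EE[M_{\mu_z\wedge\nu}]$ and $\EE[\Pi_{\mu_z\wedge\nu}]$,
\[ \EE_{p,q}[\bar J(\mu_z,\nu)]\ \ge\ \EE[M_{\mu_z\wedge\nu}]-\EE[\Pi_{\mu_z\wedge\nu}]\ \ge\ \langle q,y\rangle-V_*(p,y)\ =\ V(p,q), \]
which proves Case~1. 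Note this bound holds for \emph{every} $q$, using $y\in\partial_2^-V(p,q)$ only in the final identification. If instead $z\in\CS$, then $V_*(p,y)=h_*(p,y)$, so $V(p,q)=\langle q,y\rangle-h_*(p,y)\le h(p,q)$ by Fenchel--Young, hence $V(p,q)=h(p,q)=\EE_{p,q}[\bar J(0,\nu)]$ for all $\nu$ since $h\le V^-$ (Lemma~\ref{propertiesV}); this is Case~3. In Case~2, Player~1 uses his observation of $X_0$ to split $p$ into the conditional laws $p',p''$ associated with the decomposition of $z$ in (SC5) (masses $1-m$ and $m$; (SC3) makes this measurably consistent on the relevant supports), playing $\mu_{z'}$ on the first event and stopping at $0$ on the second; applying the Case~1 bound (valid for all $q$) on the first event, the Case~3 computation on the second, and combining with weights $1-m,m$ as in \eqref{auseinander}, yields $\EE_{p,q}[\bar J(\mu,\nu)]\ge(1-m)(\langle q,y'\rangle-V_*(z'))+m(\langle q,y''\rangle-V_*(z''))=\langle q,y\rangle-V_*(z)=V(p,q)$.

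The main obstacle is Step~1: proving rigorously that the thinning construction through the maps $M(t,k)$ realizes the intended state-dependent stopping rate, so that the induced belief process coincides with the deterministic flow $p_\cdot$ --- a self-consistency statement closed by uniqueness for the filtering ODE together with \eqref{structuredynamic} --- while keeping careful track of the right-continuous augmentations $\CH^{\mu}$, of the conditional law of $X$ at the stopping instant $\mu_z$, and of the joint measurability of $\mu_z$. A secondary technical point is the application of Dynkin's formula to $V_*(Z_t)$ in Step~2, where one must exploit \eqref{conederivative} to compensate for the fact that the saddle-conjugate $V_*$ is only directionally, not fully, differentiable along the flow--jump cone.
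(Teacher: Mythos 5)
Your proposal is correct and follows essentially the same route as the paper: Part 1 is the same PDMP/filtering identification of the belief process via \eqref{structuredynamic} and uniqueness for the filtering ODE, and Part 2 is the same combination of Dynkin's formula with $(SC4)$ (your supermartingale $\Pi$), the martingale identity for $\langle q,y\rangle$ (your $M$), and Fenchel--Young with $f_*\le V_*\le h_*$ and $V_*=h_*$ on $\CS$; you merely apply the duality pointwise before taking expectations rather than after. The only cosmetic slip is attributing the defining inequality of $\CH$ to \eqref{subdual} — it is just the definition of $\CH$ — which does not affect the argument.
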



{
The construction of the characteristics $(\alpha,\lambda,\phi)$  is illustrated through two examples in section \ref{sectionexample} where a precise description of the optimal stopping time $\mu$ is given.   
The structure of the proof is as follows. In part 1 we prove that the belief process $\pi$ associated with the stopping time $\mu$, can be extended to a process $Z=(\pi,\xi)$ which is a PDMP with characteristics $(\alpha,\lambda,\phi)$. This first part is purely technical, as the definition of $\mu$ was designed exactly for this result to hold. Part 2 is more classical, we apply Dynkin's formula to $V_*(Z)$, and using the structure conditions on $Z$, this leads to the inequality \eqref{eq_dualopt}. Using then duality arguments similar to the one used in section \ref{sectionresult}, we prove that \eqref{eq_dualopt} implies the optimality of $\mu$.}

\begin{proof}
\p
\textbf{Part 1:} We construct a process $Z$ which is an $\CH^\mu$-PDMP with state space $E \subset \Delta(K)\times \RR^L$ and characteristics $(\alpha,\lambda,\phi)$. Some details are omitted as this first part relies on classical probabilistic arguments. Throughout the proof, $\PP$ stands for $\PP'_{p,q}$, and $\EE$ for the associated expectation operator.
\p
$\bullet$ We consider at first the case $z \in E_\CH$. 
\p
Define the belief process $\pi$ by (\ref{pi}). Let us denote $z_t=(p_t,y_t)=w_z(t)$.
Define then the process $\xi$ by 
\[ \forall t \geq 0, \;\xi_t= y_t \indic_{t<\mu} + \phi_y(z_\mu) \indic_{t \geq \mu}.\]  
Note that $\xi$ is an $\CH^\mu$-adapted c\`adl\`ag random process taking values in $\RR^L$.
\p
We will prove that the process $Z$ with $Z_t:=(\pi_{t \wedge \mu},\xi_t)$  is an $\CH^\mu$-PDMP with state space $E \subset \Delta(K)\times \RR^L$, characteristics $(\alpha,\lambda,\phi)$, having a unique jump at time $\mu$. 
\p
Let $T:=\sup\{ t \,|\, \PP( \mu >t)>0 \}$. By definition of $\mu$, we have  $T>0$ and for all $k\in K$ and all  $0 \leq t <T$ such that $(p_t)_k >0$:
\be \label{condintensity}
 \frac{1}{h}\left(\PP(\mu \in(t,t+h) | X_t=k, \mu >t)-\int_t^{t+h} \frac{\phi_p(z_s)_k}{(p_s)_k} \lambda(z_s)ds \right) \underset{h \rightarrow 0+}{\longrightarrow} 0,
\ee
and the convergence is locally uniform in $t$.
The proof  is omitted as it follows from standard properties of jump processes. 
\p
We claim that:
\be \label{edoforpi}
\forall t \geq 0, \; \pi_{t\wedge \mu} = p_t \indic_{t<\mu}+ \pi_{\mu} \indic_{t \geq \mu}.
\ee
Indeed, for all $t\in[0,T)$, on $\{t<\mu\}$, we have $\pi_t(k)=\PP(X_t=k | \mu>t )$ so that we only have to check that
\[ \PP(X_t=k | \mu>t ) =p_t(k).\]
Let $c(t)_k:=\PP(X_t=k | \mu>t)$, $b(t):=\PP(\mu>t)$ for $t\in[0,T)$. By conditioning on the events $\{X_t=k, \mu >t\}$ for all $k\in K$, we have for all $t$ in $[0,T)$:
\be \label{edob}
\frac{1}{h} (b(t+h)-b(t))  \underset{h \rightarrow 0+}{\longrightarrow} b'(t)=- b(t)\sum_k c(t)_k \frac{\phi_p(z_t)_k}{(p_t)_k} \lambda(z_t).
\ee

\p
On the other hand, $X$ is Markov with respect to the filtration $t \rightarrow \CF^X_t \vee \sigma(u)$ and $\{\mu>t\}$ belongs to the completion of $\CF^X_t \vee \sigma(u)$ (see e.g. Proposition 1 in \cite{tsirelson}), so that the conditional law of $(X_s)_{s \geq t}$ given $\{\mu>t\}$ is $\PP_{c(t)}$. We deduce that
\[\frac{1}{h} (\PP( X_{t+h}=k | \mu >t ) - \PP(X_t=k | \mu >t)) \underset{h \rightarrow 0+}{\longrightarrow} (\tr R c(t))_k  .\]
We have:
\begin{align*}
\frac{1}{h} (c(t+h)_k-c(t)_k)&= \frac{1}{h} [\PP( X_{t+h}=k | \mu >t+h ) - \PP(X_{t+h}=k | \mu >t)\\
&\qquad+\PP( X_{t+h}=k | \mu >t ) - \PP(X_t=k | \mu >t)] \\
&=  \frac{1}{h} \frac{1}{b(t)}(\PP( X_{t+h}=k , \mu >t+h ) - \PP(X_{t+h}=k , \mu >t)) \\
&\qquad + \frac{1}{h} \PP( X_{t+h}=k , \mu >t+h )(\frac{1}{b(t+h)}-\frac{1}{b(t)}) \\
&\qquad +\frac{1}{h} (\PP( X_{t+h}=k | \mu >t ) - \PP(X_t=k | \mu >t)
\end{align*}
Using that
\begin{align*}
\PP( X_{t+h}=k , \mu >t+h ) - \PP(X_{t+h}=k , \mu >t) &= -\PP(X_{t+h}=k, \mu \in (t,t+h]) \\
&=-\PP(X_t=k, \mu \in(t,t+h])+o(h),
\end{align*}
we deduce that for all $t\in[0,T)$:
\[\frac{1}{h} \frac{1}{b(t)}(\PP( X_{t+h}=k , \mu >t+h ) - \PP(X_{t+h}=k , \mu >t))  \underset{h \rightarrow 0+}{\longrightarrow} -c(t)_k \frac{\phi_p(z_t)_k}{(p_t)_k} \lambda(z_t).\]
We conclude that for all $t\in[0,T)$:
\[\frac{1}{h} (c(t+h)_k-c(t)_k)\underset{h \rightarrow 0+}{\longrightarrow} c'(t)_k=(\tr R c(t))_k -\lambda(z_t) \left[ \phi_p(z_t)_k \frac{c(t)_k}{(p_t)_k}  - c(t)_k\sum_{k'}  \frac{c(t)_{k'}}{(p_t)_{k'}}\phi_p(z_t)_{k'}\right].\]
Using condition \eqref{structuredynamic}, we can check that $t \rightarrow p_t$ is the unique solution of the above differential equation, so that $c(t)=p(t)$ for all $t \in [0,T)$ since $c(0)=p(0)$. This concludes the proof of \eqref{edoforpi}.
We deduce also that for all $t\geq 0$, we have: 
\[ b(t)=\PP(\mu>t)= \exp(-\int_0^t \lambda(z_s) ds).\]
Indeed, using that $p_t=c(t)$, the differential equation in the right-hand side of \eqref{edob} admits the above expression as unique solution.
\p
We now prove that $\pi_{\mu}=\phi_p(z_{\mu})$ on $\{\mu <\infty\}$. 
We start with the fact that
\[ \PP(X_t=k  | \mu \in(t,t+h])= -b(t)(p_t)_k\frac{\PP( \mu \in(t,t+h] | X_t=k, \mu >t)}{b(t+h)-b(t)}.\]
For all $t\in [0,T)$, we have therefore:
\be \label{limitlaw}  \PP(X_t=k  | \mu \in(t,t+h])\underset{h \rightarrow 0+}{\longrightarrow} \phi_p(z_t)_k ,
\ee
and the convergence is locally uniform in $t$. 
\p
For all $n\geq 1$ and all $i \in \NN$, let $t^n_i = \frac{i}{2^n}$. 
Let $\CH^n$ denote the $\sigma$-field generated by the events $(\{\mu \in (t^n_i,t^n_{i+1}]\})_{i \geq 0}$. Then $\sigma(\mu)=\bigvee_{n \geq 0} \CH^n$, and (up to null sets) $\CH^\mu_{\mu}=\sigma(\mu)$.
For all $k\in K$, we have by the martingale convergence theorem:
\[ (\pi_{\mu})_k\indic_{\mu <\infty} = \PP(X_{\mu}=k | \CH^\mu_\mu)\indic_{\mu <\infty}  = \lim_{n \rightarrow \infty} \PP( X_{\mu}=k | \CH^n )\indic_{\mu <\infty}\quad  a.s.\]
On the other hand, for all $n\geq 0$, we have:
\begin{align*}
\PP( X_{\mu}=k | \CH^n )\indic_{\mu <\infty} = \sum_{i \geq 0\,:\, t^n_i <T} \indic_{\mu \in (t^n_i,t^n_{i+1}]} \PP( X_{\mu}=k |\mu \in (t^n_i,t^n_{i+1}]).
\end{align*}
The following convergence holds locally uniformly in $t$ for $t\in[0,T)$:
\[ \left(  \PP( X_{\mu}=k |\mu \in (t,t+\frac{1}{2^n}])-\PP( X_{t}=k |\mu \in (t,t+\frac{1}{2^n}])\right) \underset{n \rightarrow \infty}{\longrightarrow} 0.\]
We deduce easily that the following convergence holds almost surely:
\[\PP( X_{\mu}=k | \CH^n )\indic_{\mu <\infty} \rightarrow \phi_p(z_\mu)_k \indic_{\mu <\infty},\]
which concludes the proof that $\pi_{\mu}=\phi_p(z_{\mu})$ on $\{\mu <\infty\}$. 
\p
To summarize, the process $Z$ is such that for all $t\geq 0$
\[ Z_t = z_t\indic_{t <\mu} + \phi(z_\mu)\indic_{t \geq \mu},\]
and $\mu$ is such that $\PP(\mu >t)=\exp(-\int_0^t \lambda(z_s) ds)$. As, up to null sets, $\CH^\mu$ is the natural filtration of $Z$, we deduce that $Z$ is an $\CH^\mu$-PDMP with characteristics $(\alpha,\lambda,\phi)$, having a unique jump at time $\mu$.
\p
$\bullet$ If $z\notin E$, define the belief process $\pi$ by (\ref{pi}). Then $\pi_0= z' \indic_{\mu >0} + z''\indic_{\mu=0}$ and $\PP(\mu>0)=1-m$. Let us denote $z_t=(p_t,y_t)=w_{z'}(t)$. Define then the process $\xi$ by 
\[ \forall t \geq 0, \;\xi_t= y_t \indic_{t<\mu} + \phi_y(z_\mu) \indic_{t \geq \mu}.\]  
As for the previous case, the process $Z$ with $Z_t:=(\pi_{t \wedge \mu},\xi_t)$  is an $\CH^\mu$-PDMP with state space $E \subset \Delta(K)\times \RR^L$, characteristics $(\alpha,\lambda,\phi)$, having a unique jump at time $\mu$. Indeed, reasoning conditionally on the event $\{\mu>0\}$, the same analysis as above shows that for all $t\geq 0$
\[ Z_t\indic_{\mu>0} = z_t\indic_{0<t <\mu} + \phi(z_\mu)\indic_{t \geq \mu>0},\]
and that $\mu$ is such that $\PP(\mu >t | \mu >0)=\exp(-\int_0^t \lambda(z_s) ds)$ for all $t\geq 0$. On the other hand, on the event $\{\mu=0\}$, we have $Z_t=Z_0=z'' \in \CS$ for all $t \geq 0$, which concludes the proof.
\p
$\bullet$ Case $z\in \CS$. The process defined by $Z_t=z$ for all $t\geq 0$, is an $\CH^\mu$-PDMP with state space $E \subset \Delta(K)\times \RR^L$, characteristics $(\alpha,\lambda,\phi)$.
\p
\textbf{Part 2:} We prove the optimality of $\mu$. Thanks to the results of part 1, we have constructed an $\CH^\mu$-PDMP $Z_t=(\pi_t,\xi_t)$ with state space $E \subset \Delta(K)\times \RR^L$ and characteristics $(\alpha,\lambda,\phi)$ such that
\begin{itemize}
\item[(i)] $Z_0$ takes finitely many values and:
\[\EE[Z_0]=(p,y), \;\EE[V_*(Z_0)]=V_*(p,y).\]
\item[(ii)] $Z_t \in\mathcal{H}$ on $[0,\mu)$, $Z$ may jump only at time $\mu$ and $Z_\mu \in \mathcal{S}$.
\end{itemize}
\p
As $V_*$ is convex and Lipschitz, and the vector field $\alpha$ is locally bounded, the map $t\rightarrow V_*(w_{\tilde{z}}(t))$ is locally Lipschitz for all $\tilde{z}\in E$, and therefore absolutely continuous with right-derivative $\vec{D}V_*(w_{\tilde{z}}(t);\alpha(w_{\tilde{z}}(t))$. It follows  that $V_*$ belongs to the extended generator of the PDMP $Z$ (see Theorem 5.5 in Davis \cite{Davis}) and Dynkin's formula gives for any  $t \in \overline{\RR}_+$
\begin{equation}\label{chainrule2}
\begin{array}{rcl}
V_*(p,y)&=&\EE[ V_*(Z_0)] \\
&=&\EE[ e^{-r(\mu\wedge t)} V_*(Z_{\mu\wedge t})+\int_0^{\mu\wedge t} r e^{-r s} V_*(\pi_s,\xi_s) ds\\
& &  -  \int_0^{\mu\wedge t}  e^{-r s} (\vec{D}V_* (Z_s; \alpha(Z_s))+  \lambda(Z_s)(V_*(\phi(z))-V_*(Z_s)) )ds ]
\end{array}
\end{equation}
\p
Using properties $(SC4)$, we deduce that:
\begin{align*}
V_*(p,y)&=  \EE[ e^{-r(\mu\wedge t)} V_*(Z_{\mu\wedge t})+\int_0^{\mu\wedge t}  e^{-r s} \left( rV_*(\pi_s,\xi_s) -\vec{D}V_* (Z_s; \alpha(Z_s))\right.  \\ &\qquad+ \left. \vec{D}V_* (Z_s;\lambda(Z_s)(\phi(Z_s)-Z_s))\right)ds ]\\
&=\EE[ e^{-r(\mu\wedge t)} V_*(Z_{\mu\wedge t}) \\& \qquad+\int_0^{\mu\wedge t}  e^{-r s}\Big(  rV_*(Z_s)  -  \vec{D}V_* \Big(Z_s; \alpha(Z_s)+  \lambda(Z_s)(\phi(Z_s)-Z_s)\Big) \Big)ds ]\\
&=\EE[ e^{-r(\mu\wedge t)} V_*(Z_{\mu\wedge t})+\int_0^{\mu\wedge t}  e^{-r s}\Big(  rV_*(Z_s)  -  \vec{D}V_* (Z_s; AZ_s) \Big)ds ].
\end{align*}
By construction, we have that $r V_*(Z_s)- \vec{D}V_* (Z_s; A Z_s) \geq  0$ on $\{s< \mu\}$. 
This implies that for all $t$:
\begin{align*}
V_*(p,y)&\geq \mathbb{E}_{\PP'_{p,q}}\left[e^{-r(\mu\wedge t)} V_*(Z_{\mu\wedge t})\right]=\mathbb{E}_{\PP'_{p,q}}\left[\indic_{t < \mu} e^{-rt} V_*(Z_{t})+ \indic_{\mu \leq t} e^{-r\mu} V_*(Z_\mu)\right]\\
&\geq  \mathbb{E}_{\PP'_{p,q}}\left[e^{-r t} f_*(Z_t) \indic_{t <\mu}+e^{-r \mu} h_*(Z_\mu) \indic_{\mu \leq t}\right]\\
&= \mathbb{E}_{\PP'_{p,q}}\left[e^{-r t} f_*(X_{t},\xi_t) \indic_{t <\mu}+e^{-r \mu} h_*(X_{\mu},\xi_\mu) \indic_{\mu \leq t}\right],
\end{align*}
where the last equality follows by conditioning with respect to $\CH^\mu_{t}$ and $\CH^\mu_\mu$.
Finally, we obtain
\begin{equation} \label{eq_dualopt}
\begin{array}{rcl}
V_*(p,y)&\geq&  \sup_{t\in \RR_+} \mathbb{E}_{\PP'_{p,q}}\left[e^{-r t} f_*(X_{t},\xi_t) \indic_{t <\mu}+e^{-r \mu} h_*(X_{\mu},\xi_\mu) \indic_{\mu \leq t}\right] \\
& = & \sup_{\tau \in \CT^Y} \mathbb{E}_{\PP'_{p,q}}\left[e^{-r \tau} f_*(X_{\tau},\xi_\tau) \indic_{\tau <\mu}+e^{-r \mu} h_*(X_{\mu},\xi_\mu) \indic_{\mu \leq \tau}\right],
\end{array}
\end{equation}
where the last equality follows from the fact that the expression inside the expectation is independent of $\CF^Y_\infty$ (and using the same method as for Remark \ref{purifystoppingtimes}).
\p
To conclude the proof, we use duality exactly as in Proposition \ref{Vsubsolbothsides2}.
\p
Recall that since $y \in \partial_2^- V(p,q)$, we have 
\[ V(p,q)= \langle q,y \rangle - V_*(p,y) .\]
It follows that 
\[ V(p,q) \leq \inf_{\tau \in \CT^Y} \langle q,y \rangle -\mathbb{E}_{\PP'_{p,q}}\left[e^{-r \tau} f_*(X_{\tau},\xi_\tau) \indic_{\tau <\mu}+e^{-r \mu} h_*(X_{\mu},\xi_\mu) \indic_{\mu \leq \tau}\right].\]
\p
For any $\tau \in \CT^Y$, we have, using that $Y$ and $\xi$ are independent
\[ \langle q,y \rangle= \EE_{p,q}[e^{-r \tau}\langle \delta_{Y_\tau}, \xi_{\tau} \rangle \indic_{\tau <\mu} + e^{-r \mu}\langle \delta_{Y_\mu}, \xi_\mu \rangle \indic_{\mu \leq \tau}].\]
We deduce that 
\begin{multline}
\langle q,y \rangle -\mathbb{E}_{\PP'_{p,q}}\left[e^{-r \tau} f_*(X_{\tau},\xi_\tau) \indic_{\tau <\mu}+e^{-r \mu} h_*(X_{\mu},\xi_\mu) \indic_{\mu \leq \tau}\right]\\
 = \EE_{\PP'_{p,q}}[e^{-r \tau}(\langle \delta_{Y_\tau}, \xi_{\tau} \rangle -f_*(X_{\tau},\xi_\tau))\indic_{\tau <\mu} + e^{-r \mu}(\langle \delta_{Y_\mu}, \xi_\mu \rangle-h_*(X_{\mu},\xi_\mu) )\indic_{\mu \leq \tau}].
 \end{multline}
It follows from the definitions of $f_*,h_*$ that:
\[ \langle \delta_{Y_\tau}, \xi_{\tau} \rangle -f_*(X_{\tau},\xi_\tau) \leq f(X_\tau,Y_\tau)\]
\[ \langle \delta_{Y_\mu}, \xi_{\mu} \rangle -h_*(X_{\mu},\xi_\mu) \leq h(X_\mu,Y_\mu),\]
from which we conclude that
\begin{multline}
\langle q,y \rangle -\mathbb{E}_{\PP'_{p,q}}\left[e^{-r \tau} f_*(X_{\tau},\xi_\tau) \indic_{\tau <\mu}+e^{-r \mu} h_*(X_{\mu},\xi_\mu) \indic_{\mu \leq \tau}\right]\\
 \leq \EE_{\PP'_{p,q}}[e^{-r \tau}f(X_\tau,Y_\tau)\indic_{\tau <\mu} + e^{-r \mu}h(X_\mu,Y_\mu)\indic_{\mu \leq \tau}].
 \end{multline}
Substituting in the previous inequality, we have 
\[ V(p,q) \leq \inf_{\tau \in \CT^Y} \EE_{\PP'_{p,q}}[e^{-r \tau}f(X_\tau,Y_\tau)\indic_{\tau <\mu} + e^{-r \mu}h(X_\mu,Y_\mu)\indic_{\mu \leq \tau}]=\inf_{\tau \in \CT^Y} \EE_{\PP_{p,q}}[\bar{J}(\mu,\tau)],\]
which proves the optimality of $\mu$. 
\end{proof}

\section{Two examples}\label{sectionexample}

{As explained in section \ref{sectionopen}, we conjecture that Theorem \ref{suffcientbothsides} can be applied to any instance of the model studied in this paper. However, the proof of such a conjecture seems very difficult and related to a stratification problem which is beyond the scope of this paper. 
For this reason, in order to illustrate how to apply our verification result, we only consider two simple examples for which we can provide explicit solutions. In both cases, we can compute the value function, and this makes possible to verify the assumptions of Theorem \ref{suffcientbothsides}. These examples are reminiscent of several explicit computations for the asymptotic value of repeated games with incomplete information on both sides by Mertens and Zamir in \cite{MZ}. As in \cite{MZ}, we do not know exactly for which class of games such an explicit computation is possible, and we only claim that the same method can be applied to some variants of the first example, or for any one-dimensional model as the second example studied below.}

\subsection{Asymmetric Information with constant states}

We assume that $K=L=\{0,1\}$ and that $R=Q=0$, which means that $X_t=X_0$ and $Y_t=Y_0$ almost surely for all $t \geq 0$.  
\p
With an abuse of notation, we write $V(p,q)$ for $V((p,1-p),(q,1-q))$ for $(p,q)\in [0,1]^2$ (and similarly for $f,h$). $V$ as well as $f$ and $h$ are thus seen as a functions defined on $[0,1]^2$. 
\p
Let us consider the particular case 
\[ h(p,q)=3p+2q-4 , \quad f(p,q)=2p+3q-1.\]
More general cases can be solved with similar arguments.
\subsubsection{The value function}

{Note at first that the fact that $R=Q=0$ implies that the directional derivatives appearing in the variational characterization of the value function given in Theorem \ref{thmextremal} are all equal to zero. Moreover, using that $r>0$, the variational inequalities do not depend on the discount factor.
Precisely,} applying Theorem \ref{thmextremal}, we have that the value function $V$ is the unique Lipschitz saddle function such that $f\geq V \geq h$ and
\be \label{cond1ex1} \forall q\in [0,1], \forall p \in Ext(V(.,q)), \; V(p,q)>h(p,q) \Rightarrow  V(p,q)\leq 0,\ee
\be\label{cond2ex1} \forall p\in [0,1], \forall q \in Ext(V(p,.)), \; V(p,q)<f(p,q) \Rightarrow  V(p,q) \geq 0.\ee
The explicit expression for $V$ is given in the next proposition. 
\begin{proposition} 
\[ V(p,q)=\left\{ \begin{array}{rcl} 0 & \text{if} & (p,q)\in [\frac{1}{2},1]\times [0,\frac{1}{2}] \vspace{.2cm} \\ \frac{2q-1}{q}(p+q-1) &\text{if} & p\geq 1-q, \text{and} \; q\geq\frac{1}{2}  \vspace{.2cm} \\ \frac{1-2p}{1-p}(p+q-1) &\text{if} & q\leq 1-p, \text{and} \; p\leq\frac{1}{2}.\end{array} \right. \]
\end{proposition}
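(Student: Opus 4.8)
The plan is to verify directly that the function $V$ displayed in the statement meets every requirement of the characterization recalled just above it: $V$ must be a Lipschitz saddle function with $h\le V\le f$ obeying the pointwise implications \eqref{cond1ex1} and \eqref{cond2ex1}. Once this is checked, the uniqueness part of Theorem~\ref{thmextremal} identifies $V$ with the value. It is convenient to name the three closed regions (A) $=[\tfrac{1}{2},1]\times[0,\tfrac{1}{2}]$, (B) $=\{p+q\ge1,\ q\ge\tfrac{1}{2}\}$ and (C) $=\{p+q\le1,\ p\le\tfrac{1}{2}\}$; one first observes that they cover $[0,1]^{2}$ and that on their common boundaries $\{q=\tfrac{1}{2}\}$, $\{p=\tfrac{1}{2}\}$ and $\{p+q=1\}$ the three formulas agree and all vanish, so $V$ is well defined and continuous, and since each branch is smooth with bounded gradient on its (compact) region, $V$ is Lipschitz.

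For the saddle property I would fix $q$ and examine $p\mapsto V(p,q)$ branch by branch. On (C), the substitution $s=1-p$ rewrites the formula as $2q+1-2s-q/s$, whose second derivative in $s$ equals $-2q/s^{3}\le0$, so this branch is concave (strictly if $q>0$); the other branch is either the constant $0$ (region (A), when $q\le\tfrac{1}{2}$) or an affine increasing map of slope $(2q-1)/q$ (region (B), when $q\ge\tfrac{1}{2}$). A short computation of one-sided derivatives shows the branches glue into a concave function — it is $C^{1}$ across $p=1-q$, and at $p=\tfrac{1}{2}$ (for $q\le\tfrac{1}{2}$) the left derivative $2-4q\ge0$ dominates the right derivative $0$ — and the convexity of $q\mapsto V(p,q)$ is entirely symmetric. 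Next I would prove $h\le V\le f$ by computing the rational differences region by region: on (B), $V-h=(1-p)(1+q)/q\ge0$ and $f-V>0$; on (C), $f-V=q(2-p)/(1-p)\ge0$ and $V-h\ge 2-p>0$; on (A), $V=0$ while $h\le0$ (the maximum of $h$ over (A) is $0$, attained at $(1,\tfrac{1}{2})$) and $f\ge0$ (since $2p\ge1$ there). Besides the bounds, these computations pin down the contact sets $\{V=h\}=\{(1,q):q\ge\tfrac{1}{2}\}$ and $\{V=f\}=\{(p,0):p\le\tfrac{1}{2}\}$, and they exhibit the sign pattern $V\ge0$ on (B), $V\le0$ on (C) and $V=0$ on (A), which drives the rest of the argument.

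The core step is to verify \eqref{cond1ex1} and \eqref{cond2ex1}, for which I would first describe the extreme-point sets. Since $V(\cdot,q)$ is affine along the (A)- and (B)-branches and (for $q>0$) strictly concave along the (C)-branch, every $p\in Ext(V(\cdot,q))$ lies in the closed region (C) or is the Dirac mass $p=1$; dually every $q\in Ext(V(p,\cdot))$ lies in (B) or is $q=0$ (the Dirac masses $0,1$ being always extreme, by the Remark following the definition of $Ext$). Then \eqref{cond1ex1} follows: for $p\in Ext(V(\cdot,q))$ lying in (C) one has $V(p,q)\le0$, hence $rV(p,q)\le0$; and for $p=1$, either $q\le\tfrac{1}{2}$ and $(1,q)\in$(A) with $V=0$ so $rV\le0$ again, or $q\ge\tfrac{1}{2}$ and $(1,q)\in$(B) where $V=h$ and the implication is vacuous. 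Condition \eqref{cond2ex1} is the mirror image: for $q\in Ext(V(p,\cdot))$ lying in (B) one has $V(p,q)\ge0$, hence $rV(p,q)\ge0$; and for $q=0$, either $p\ge\tfrac{1}{2}$ and $(p,0)\in$(A) with $V=0$, or $p\le\tfrac{1}{2}$ and $(p,0)\in$(C) where $V=f$ and the implication is vacuous. All hypotheses being met, Theorem~\ref{thmextremal} yields the asserted formula.

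The step I expect to be the main obstacle is the extreme-point analysis: pinning down exactly where $V(\cdot,q)$ and $V(p,\cdot)$ are strictly concave/convex versus affine, and deciding whether the junction points $p=1-q$ and $p=\tfrac{1}{2}$ are extreme, since the sub/supersolution inequalities are only imposed at extreme points. Once those sets are identified, verifying \eqref{cond1ex1}–\eqref{cond2ex1} reduces to reading off the sign of $V$ on the three regions together with the two contact sets $\{V=h\}$ and $\{V=f\}$.
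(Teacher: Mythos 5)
Your verification is correct: the three branches glue into a Lipschitz saddle function, the computations of $V-h$ and $f-V$ on each region are right (I checked $V-h=(1-p)(1+q)/q$ on (B), $f-V=q(2-p)/(1-p)$ on (C), the junction derivatives $(2q-1)/q$ at $p=1-q$ and $2-4q\ge 0$ versus $0$ at $p=\tfrac12$), and your treatment of the extreme points is sound — you only need a superset of $Ext(V(\cdot,q))$ and $Ext(V(p,\cdot))$ on which to check \eqref{cond1ex1}--\eqref{cond2ex1}, and the sign pattern $V\le 0$ on (C), $V\ge 0$ on (B) together with the contact sets $\{V=h\}$, $\{V=f\}$ does the rest. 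This is, however, not the argument the paper writes out: the paper states in one sentence that "one may prove the proposition by verification" and then instead presents a constructive derivation — it first pins $V$ down on the four edges of the square using the fact that $V(\cdot,q)$ must be affine wherever it is positive (and dually in $q$), deduces $V=0$ on $[\tfrac12,1]\times[0,\tfrac12]$ from the saddle property, and then identifies the free boundary $m(p)=1-p$ by imposing the saddle condition along the zero level set. Your route is the fully rigorous one the paper merely asserts is available, and it has the advantage of being self-contained and checkable line by line; the paper's derivation buys an explanation of where the formula comes from (useful for the perturbed payoffs mentioned in the subsequent remark) at the cost of leaving the actual verification to the reader. No gaps.
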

One may prove the above proposition by verification, by checking that $V$ fulfills all the conditions given in Theorem \ref{thmextremal}. Nevertheless, let us explain briefly how to obtain directly this explicit expression.
\eqref{cond1ex1} implies that for any given $q\in [0,1]$, the map $p \rightarrow V(p,q)$ is  affine on any interval on which it is non-negative, and \eqref{cond2ex1} implies a dual property for $q\rightarrow V(p,q)$. Using these properties, it is easy to check that 
\[ V(p,0)=\left\{ \begin{array}{rcl} f(p,0) & \text{if} & p\leq \frac{1}{2} \vspace{.2cm} \\ 0 &\text{if} & p\geq \frac{1}{2}  \end{array} \right., \; V(p,1)=ph(1,1),\]
\[ V(1,q)=\left\{ \begin{array}{rcl} 0 & \text{if} & q\leq \frac{1}{2} \vspace{.2cm}\\ h(1,q) &\text{if} & q\geq \frac{1}{2}  \end{array} \right., \;V(0,q)=(1-q)f(0,0).  \]
Using that $V$ is a saddle function, we deduce also that
\[ \forall (p,q)\in \left[\tfrac{1}{2},1\right]\times \left[0,\tfrac{1}{2}\right], \; V(p,q)=0.\]
Using continuity and concavity with respect to the first variable, there exists necessarily a one-to-one map $p \in [0,1/2] \rightarrow m(p)$ such that $V(p,m(p))=0$ (it may be any closed set using only continuity of $V$, but concavity implies that it is indeed a curve). 
Using the above-mentioned properties, the following functions are affine
\[ \forall q \in [1/2,1], p \in [m^{-1}(q),1] \rightarrow V(p,q),\]
\[ \forall p \in [0,1/2], q \in [0,m(p)] \rightarrow V(p,q).\]
Writing the condition that $V$ is a saddle function at points $(p,m(p))$, it leads to $m(p)=1-p$. 

\begin{remark}
If one slightly perturbs  the coefficients of the functions $f,h$, the above method still applies. However, the expression of the map $m$ will be more complex.
\end{remark}

\subsubsection{Non-existence of the value with non-randomized stopping times}

{Let us take benefit here from having at hand an explicit solution for the value $V$ to show that in general the value of the same game where both players are restricted to pure (i.e. non-randomized) stopping times does not exist. We choose this example for convenience as the proof is very short. However, let us emphasize that one may construct a simpler game\footnote{{For example, if $K=\{0,1\}$, $R=0$, $L=\{\emptyset\}$, $f(0)=2,f(1)=-1,h(0)=1,h(1)=-2$, one can check easily that the value of the game with pure stopping times does not exists  when $p=(\frac{1}{2},\frac{1}{2})$, and that the lower value of this game is not concave.}} with incomplete information on one side only and with a constant Markov chain (i.e. with $L=\{\emptyset\}$ and $R=0$) for which this value does not exist. Note that the value neither exists in the second example studied below, but the proof of this result is much longer and more technical than for the present example.}
 
Let us define $\hat{V}^+,\hat{V}^-$ as the lower and upper value of the same game where both players are restricted to pure (classical) stopping times, i.e. for all $(p,q) \in [0,1]^2$ (using the same convention as above)
\be 
 \hat{V}^+(p,q):= \inf_{\nu \in  \CT^Y}\; \sup_{\mu \in  \CT^X}\; \mathbb{E}_{p,q}\left [ \bar{J}(\mu,\nu) \right],
\ee
\be \hat{V}^-(p,q):=  \sup_{\mu \in  \CT^X}\; \inf_{\nu \in  \CT^Y}\;  \mathbb{E}_{p,q}\left [  \bar{J}(\mu,\nu) \right].
\ee
Then a direct computation shows that:
\begin{align*}
\hat{V}^-(p,q)&=\sup_{t_0,t_1 \in \overline{\RR}_+} \; \inf_{s_0,s_1 \in \overline{\RR}_+}\; \sum_{i,j=0,1} p_iq_j e^{-r(t_i\wedge s_j)}\left(h(i,j)\indic_{t_i<s_j} +f(i,j)\indic_{s_j \leq t_j} \right)\\
&= \left\{ \begin{matrix} pq-(1-q) & \text{if} & pq>(1-q) \\ 0 &\text{if} & p \geq 1/2 \; \text{and} \; pq \leq 1-q \\ (1-2p)(pq-(1-q))&\text{if} & p < 1/2 \; \text{and} \; pq \leq 1-q  \end{matrix} \right.
\end{align*}
where $p_0=p$ and $p_1=1-p$, and similarly for $q_0,q_1$. 
On the other hand, by symmetry of the problem, one shows that:
\[ \hat{V}^+(p,q)=-\hat{V}^-(1-q,1-p).\]
The reader may check that $p \rightarrow \hat{V}^-(p,2/3)$ is not concave, that $q\rightarrow \hat{V}^+(1/3,q)$ is not convex and that 
\[ \forall p\in (0,1/2), \;\hat{V}^+(p,1-p)>V(p,1-p)>\hat{V}^-(p,1-p) \]
 which proves that the value does not exist in general when players are restricted to pure stopping times.
\subsubsection{Optimal strategies}
We will show how to construct for our example an optimal stopping time $\mu\in\mathcal{T}^X_m$ for player 1. Optimal stopping times for player 2 can be determined in a similar way. 
\p
For $(p,y)\in [0,1]\times \RR$, we consider the (restricted) convex conjugate  
\[V_*(p,y):=V_*((p,1-p),(y,0))=\max_{q \in [0,1]} qy - V(p,q). \]
As $V_*((p,1-p),(y_1,y_2))=y_2+V_*((p,1-p),(y_1-y_2,0))$, Theorem \ref{suffcientbothsides} can be applied without any modification to the functions $V$ and $V_*$ as redefined here.
Note that the set $\CH$ is given by
\[\mathcal{H}:=\{(p,y)\in [0,1]\times \RR \,|\, rV_*(p,y) - \vec{D}_2V_*(p,y; ry) \geq 0 \}.\]
In order to calculate $V_*$ we distinguish different zones of $[0,1]\times\mathbb{R}$. 

\begin{minipage}[b]{0.65\linewidth}
We set

\begin{align*}
A&=\{(p,y): p\geq 1/2 \; ,\; y \leq 0 \}\\
B&=\{(p,y): p\geq 1/2\; ,\; y\in [0,4p-2] \}\\
C&=\{(p,y): p\leq 1/2\; ,\; y\leq \tfrac{1-2p}{1-p} \}\\
D&=\{(p,y): p\geq 1/2\;,\; y \in [4p-2,1+p]\}\\
&\qquad  \cup\{(p,y) : p\leq 1/2 \;,\;y \in [\tfrac{1-2p}{1-p},1+p]\}\\
E&=\{(p,y): y \geq 1+p \}.
\end{align*}
A direct computation yields
\[ V_*(p,y)=\left\{ \begin{array}{rccl} 0 & \text{for} & (p,y)\in A \\
	 \frac{1}{2}y  & \text{for} & (p,y)\in B \\
 	1-2p  & \text{for} & (p,y)\in C \\
	 -2 \sqrt{2-y}\sqrt{1-p}+3-2p  & \text{for} & (p,y)\in D \\ 
	  y-p  & \text{for} & (p,y)\in E, \end{array} \right. \]

\end{minipage}
\hfill
\begin{minipage}[b]{0.35\linewidth}
\p
\centering
\includegraphics[scale=0.20]{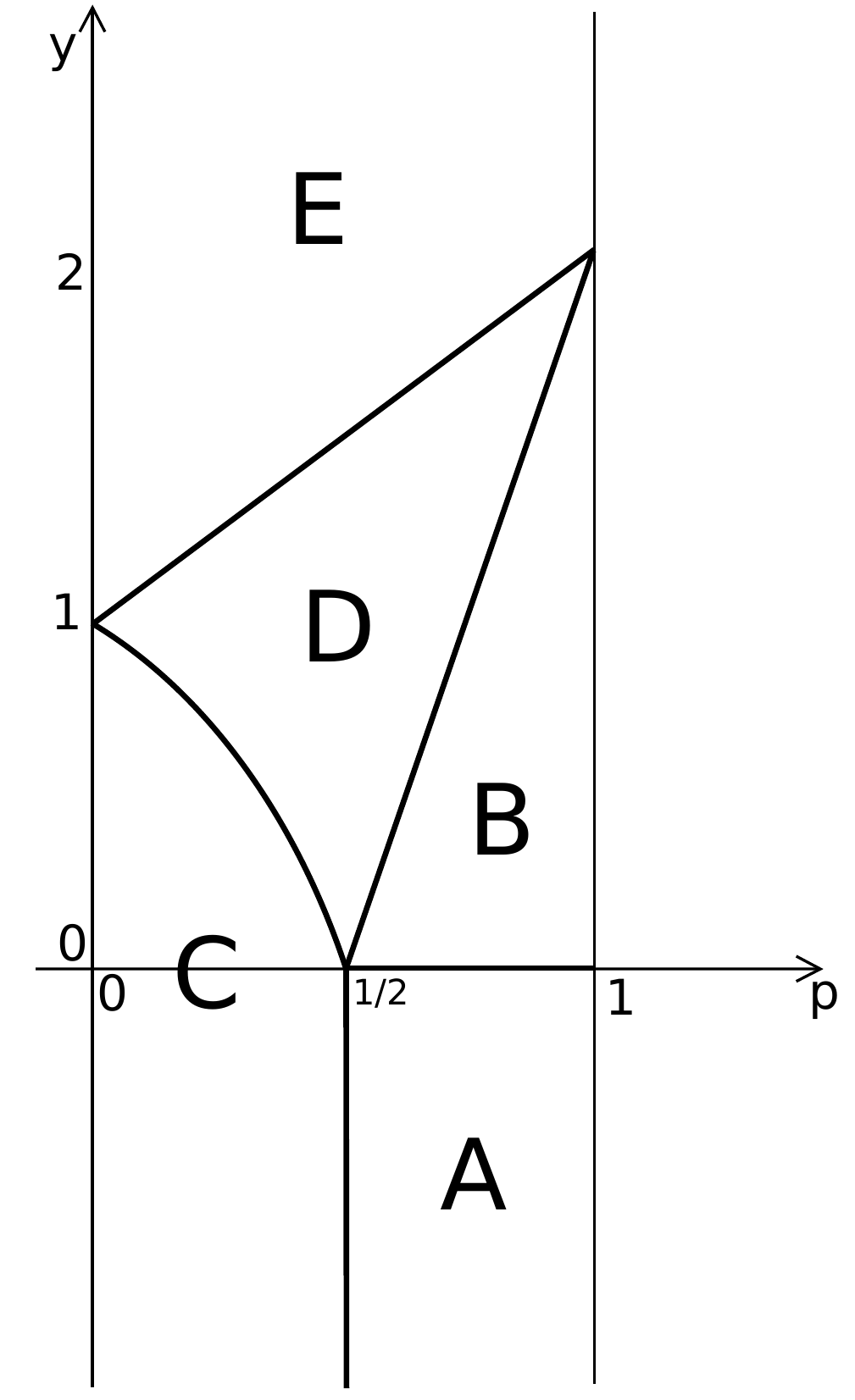}
\end{minipage}
\p
and that
\[ h_*(p,y)=\left\{ \begin{array}{rcl} 4-3p & \text{if} & y \leq 2 \vspace{.2cm} \\ y+2-3p &\text{if} & y \geq 2.\end{array} \right. .\]
It follows that $\mathcal{S}=\{ (p,y) \,|\, V_*(p,y)=h_*(p,y)\}=\{ (1,y) \,|\, y\geq 2\}$. One can easily check that 
\[  \CH=A\cup B \cup C \cup \{p=0\}.\]
Furthermore we note that $V_*$ is affine on each zone $A,B,C,E$.
\p
On $D$, $V_*$ is not an affine function, however it is easily seen that for each $p \in [0,1/2]$, $V_*$ is affine on the segment joining the point $(p,\frac{1-2p}{1-p})$ and the point $(1,2)$.
In order to construct an optimal stopping time we shall first construct some characteristics $(\alpha,\lambda,\phi)$ satisfying the conditions $(SC)$ of Definition \ref{structure}.

\p
The actual state space of our PDMP $Z$ will be $E=E_\CH \cup \CS$ with $E_\CH=A\cup C \cup \{p=0\}$. 
\p
Roughly speaking, when starting in $E_\CH$, this process has to stay in $E_ \CH$ and may jump one time over a flat part of the graph of $V_*$ from $E_\CH$ to $E_\CS=\mathcal{S}$. The condition \eqref{structuredynamic} implies that for all $z=(p,y) \in E_\CH$: 
\be \label{structureexample1} \lambda(z)(\phi(z)-z)+\alpha(z)= (0,ry) .\ee 
As $E_\CH$ has to be invariant for the flow associated to the vector field $\alpha$, if for some point $z$, the solution of the differential equation associated to the vector field $(0,ry)$ starting at $z$ does not exit $E_\CH$ immediately, we simply choose $\alpha(z)=(0,ry)$ and $\lambda(z)=0$. 
\p
We are left to consider the points  $z=(p,\frac{1-2p}{1-p})$ for $p \in (0,1/2)$. For these points, the process has to jump with positive intensity, otherwise it will exit from $E_\CH$. In order for the jump to be on a flat part of the graph of $V_*$, the only possible ending point in $\mathcal{S}$ is $(1,2)$. We define therefore $\phi(z)=(1,2)$. 
The condition \eqref{conederivative} requires  $V_*$ to be differentiable at $z$ in the direction of the cone generated by $\{\alpha(z),\lambda(z)((1,2)-z)\}$. Together with the fact that the solution of $w'=\alpha(w)$ starting at $z$ has to stay in $E_\CH$, this implies that $\alpha(z)$ is tangent to the boundary of $E_\CH$, i.e. $\alpha(z)= \kappa(z) (1,\frac{-1}{(1-p)^2})$ for some constant $\kappa(z)$.  
\p
Equation \ref{structureexample1} becomes:
\[ \lambda(z)((1,2)-z) + \kappa(z) (1,\frac{-1}{(1-p)^2}) =(0,ry),\]
which leads to the unique solution 
\[ \lambda(z)=\frac{r(1-2p)}{2}, \;\,\alpha(z)=\frac{r(1-2p)}{2}\left(-(1-p), \frac{1}{1-p}\right)=\left(-\frac{r}{2}(1-2p)(1-p),\frac{r}{2}y \right).\]
Below, we describe the optimal stopping time $\mu$ for all the possible pairs $(p,y)$ given by Theorem \ref{suffcientbothsides}. 
\p
Let $(p,y) \in \Delta(K)\times \RR$. 
\p
$\bullet$ If $(p,y)\in \{y \leq 0\} \cup\{p=0\}$, then $\mu=+\infty$ is optimal.
\p\p
$\bullet$ If $(p,y) \in \mathcal{S}$, then $\mu=0$ is optimal.
\p\p
$\bullet$ If $(p,y) \in E \cap \{0<p<1\}$, then an optimal $\mu$ is given by:
\[\mu= 0.\indic_{X_0=0} +(+\infty).\indic_{X_0=1}.\] 
The induced belief process is $\pi_t= 1 .\indic_{X_0=0}+ 0 .\indic_{X_0=1}$ for $t\geq 0$, and $\xi$ is  defined by: 
\[ \forall t\geq 0, \; \xi_t= e^{rt }\xi_0, \; \text{with} \; \xi_0=2. \indic_{X_0=0}+ \frac{y-2p}{1-p} \indic_{X_0=1}.\]
\p\p
$\bullet$ If $p \in (0, \frac{1}{2})$ and $y\in (0,\frac{1-2p}{1-p}]$, let  $z_t=(p_t,y_t)$ denote the unique solution of $\frac{d z_t}{d t}=\alpha(z_t)$ with initial condition $z_{0}=(p,y)$.  Then the optimal $\mu$ is given by 
\[ \mu(\omega,u)= (+\infty) \indic_{X_0=1} + S(u)\indic_{X_0=0},\]
where $S$ is the unique non-decreasing function from $(0,1)$ to $\RR$ such that
\[ \forall t\geq 0, \int_0^1 \indic_{S(u)>t}du = \exp\left(-\int_0^t \rho(z_s) ds\right),\]
with $\rho(z_s)=\frac{1}{p_s}\lambda(p_s,y_s)$.
The induced belief process is given by: 
\[ \forall t\geq 0, \; \pi_t = p_t\indic_{t< \mu}+ 1.\indic_{t \geq \mu},\]
and $\xi$ is defined by: 
\[\forall t\geq 0, \; \xi_t= y_t\indic_{t< \mu}+  2e^{r(t-\mu) }\indic_{t \geq \mu} .\]
Player $1$ simply stops with some intensity $\rho(z_t)=\rho(Z_t)$ conditionally on the fact that $X_0=0$. The induced intensity of jump of $Z$ is therefore equal to $\pi_t\rho(Z_t)=\lambda(Z_t)$ as required.
\p\p
$\bullet$ If $(p,y)\in B$, then an optimal $\mu$ is defined by 
\[\mu(\omega,u)= 0. \indic_{\{X_0=0, \, u\leq \frac{y}{2p}\}} + (+\infty). \indic_{\{ X_0=1\} \cup\{X_0=0, u > \frac{y}{2p}\}}.\]
It means that player $1$ stops with probability $\frac{y}{2p}$ at time $0$ conditionally on $X_0=0$, and never stops otherwise. The induced belief process is $\pi_t= \frac{2p-y}{2-y} \indic_{\mu\leq t} + 1. \indic_{\mu>t}$ for $t\geq 0$ and $\xi$ is defined by $\xi_t= 0.\indic_{\mu \leq t} + 2e^{rt} \indic_{\mu >t}$ for $t\geq 0$.
\p\p
$\bullet$ If $(p,y)\in D$, let $p' \in [0,1/2]$ be such that $(p,y)$ belongs to the segment joining $(p',\frac{1-2p'}{1-p'})$ and $(1,2)$. An easy computation shows that
\[ p'= 1 - \sqrt{\frac{1-p}{2-y}}.\]
Define $z_t=(p_t,y_t)$ as the unique solution of $\frac{d z_t}{d t}=\alpha(z_t)$ with initial condition $z_{0}=(p',\frac{1-2p'}{1-p'})$.
Then an optimal $\mu$ is defined by 
\[\mu(\omega,u)= 0. \indic_{\{X_0=0, \, u\leq x\}} + S\left(\frac{u-x}{1-x} \right) \indic_{\{X_0=0, \, u>x\}}+ (+\infty). \indic_{\{ X_0=1\}}.\]
where $x=\frac{p-p'}{p(1-p')}$ and $S$ is the unique non-decreasing function from $(0,1)$ to $\RR$ such that
\[ \forall t\geq 0, \int_0^1 \indic_{S(u)>t}du = \exp\left(-\int_0^t \rho(z_s)ds\right).\]
with $\rho_s=\frac{1}{p_s}\lambda(p_s,y_s)$. The induced belief process is given by: 
\[ \forall t\geq 0, \; \pi_t = p_t\indic_{t< \mu}+ 1.\indic_{t \geq \mu},\]
and $\xi$ is defined by: 
\[\forall t\geq 0, \; \xi_t= y_t\indic_{t< \mu}+  2e^{r(t-\mu) }\indic_{t \geq \mu} .\]
Player $1$ stops at time $0$ conditionally on  $X_0=0$ with  probability $x$, and $x$ was constructed so that $\pi_0= p' \indic_{\mu>0}+ 1. \indic_{\mu=0}$. If he did not stop at time $0$, player $1$ stops with intensity $\rho(z_t)=\rho(Z_t)$ conditionally on $X_0=0$. As above, the induced intensity of jump for $Z_t$ is $\pi_t\rho(Z_t)=\lambda(Z_t)$.


\subsection{Case of incomplete information on one side}
Next, we consider the particular case where the set $L$ is reduced to a single point, implying that player 2 has no private information. In this context, admissible stopping times of player $2$ are simply random times (see Definition \ref{mixedstoppingtimes}) and the value function $V$ depends only on $p\in \Delta(K)$.

Furthermore we assume that there are only two states for the Markov chain $X$ which is observed by player 1, i.e. $K:=\{0,1\}$. We choose $X$ to be an ergodic chain with generator $R:=\left(\begin{smallmatrix}-a & a \\ b & - b \end{smallmatrix}\right)$, where $a,b>0$ so that the unique invariant measure is $(\frac{b}{a+b},\frac{a}{a+b})$.
\p
With a slight abuse of notation, we write $V(p)$ for $V(p,1-p)$ $p\in[0,1]$ (and similarly for $f,h$). $V$ as well as $f$ and $h$ are thus seen as a functions defined on $[0,1]$. Rather than providing a complete study of this one-dimensional case, we work under the following assumptions:
\begin{itemize}
\item[(H1)]  For all $p\in [0,1]$, $0<h(p)<f(p)$. 
\item[(H2)] $f$ and $h$ are increasing on $[0,1]$, i.e. $h(0)<h(1)$ and $f(0)<f(1)$.
\end{itemize}
We impose (H1) and (H2) merely to simplify the presentation. Other cases can be studied with similar arguments.


\p
\subsubsection{Value function}

By Theorem \ref{thmextremal}, $V$ is the unique concave Lipschitz function on $[0,1]$ such that $f \geq V\geq h$ and
\begin{equation}\label{exampleeq1}
\begin{array}{rcl}
 \forall p \in [0,p^*],\;  V(p)<f(p) &\Rightarrow& \; rV(p)+((a+b)p - b)V'_+(p)  \geq 0\\  
 \forall p \in (p^*,1],\; V(p)<f(p) &\Rightarrow& \; r V(p)-((a+b)p - b)V'_-(p)  \geq 0, \\
 \ \\
\text{and for any extreme point } p\in Ext(V):\\
\ \\
p\in [0,p^*]\;\text{and}\; V(p)>h(p) &\Rightarrow& \; rV(p)+((a+b)p - b)V'_+(p)  \leq 0\\    
p\in (p^*,1]\;\text{and}\; V(p)>h(p) &\Rightarrow& \; rV(p)-((a+b)p - b)V'_-(p)  \leq 0, 
 \end{array}
\end{equation}
where $V'_+,V'_-$ denote the left and right derivative of $V$ and $p^*=\frac{b}{a+b}$. 

The following result can be shown by explicitly solving (\ref{exampleeq1}).

\begin{proposition}\label{valueexample}
Under assumptions  (H1) and (H2) we distinguish three cases:
\begin{itemize}
\item[(i)] if $\frac{b}{b+r}h(1) > f(0)$ the value function is given by 
\begin{equation}
V(p)=\begin{cases}
f(p) &\mbox{on } [0,p_0] \\
\frac{p-p_0}{1-p_0} h(1)+ \frac{1-p}{1-p_0} f(p_0)   &\mbox{on } (p_0,1],
\end{cases}
\end{equation}
where $p_0$ is the unique solution in $(0,p^*)$ of the quadratic equation: 
\begin{equation}\label{quadratic}
\frac{h(1)-f(p_0)}{1-p_0}= \frac{-rf(p_0)}{(a+b)p_0 - b}.
\end{equation}
\item  [(ii)] if  $h(0)<\frac{b}{b+r}h(1) \leq f(0)$, we have
\[ V(p)=\frac{b}{b+r}  h(1)(1-p)+ p h(1).\]
\item [(iii)] if $\frac{b}{b+r}h(1) \leq  h(0)$ we have  $V=h$ on $[0,1]$.

\end{itemize}
\end{proposition}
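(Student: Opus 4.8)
The plan is to argue by verification. In each of the three cases we take the displayed function as a candidate, check that it is a Lipschitz concave function on $[0,1]$ with $h\le V\le f$, and then verify the variational system \eqref{exampleeq1}; the characterization in Theorem \ref{thmextremal} (equivalently, the one-dimensional specialization of the comparison principle of Theorem \ref{comparisonthm2}) identifies this candidate with the value. The structural fact that organizes everything is that, $L$ being a singleton, $f$ and $h$ are \emph{affine} on $[0,1]$. Hence each candidate is piecewise affine with at most one kink: Lipschitz continuity is automatic, concavity reduces to a single slope inequality at the kink, and, crucially, the set $Ext(V)$ reduces to the kink together with the two endpoints $0,1$. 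Consequently the two ``subsolution'' implications of \eqref{exampleeq1} (those with ``$\le 0$'') need only be checked at these finitely many points, whereas the two ``supersolution'' implications (required wherever $V<f$) become, on each affine piece, an affine inequality in $p$ that is verified by inspecting its endpoints.

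For case (i) I would first settle the existence and uniqueness of $p_0$. Writing $f(p)=f(0)+p(f(1)-f(0))$ and setting $\Phi(p):=\frac{h(1)-f(p)}{1-p}-\frac{rf(p)}{b-(a+b)p}$ on $[0,p^*)$, the first term is non-increasing (its derivative equals $\frac{h(1)-f(1)}{(1-p)^2}\le 0$ by (H1)) while the second is increasing (numerator positive and increasing, denominator positive and decreasing), so $\Phi$ is strictly decreasing; the hypothesis $\frac{b}{b+r}h(1)>f(0)$ says exactly $\Phi(0)>0$, and $\Phi(p)\to-\infty$ as $p\uparrow p^*$ because $f>0$, giving a unique root $p_0\in(0,p^*)$. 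Since $h(1)<f(1)$, the chord slope $s:=\frac{h(1)-f(p_0)}{1-p_0}$ satisfies $s<f(1)-f(0)$, so $V$ is concave; and $h\le V\le f$ follows because $V=f$ on $[0,p_0]$, while on $[p_0,1]$ the affine functions $V-h$ and $f-V$ are nonnegative at both endpoints ($V(1)=h(1)$, $f(p_0)>h(p_0)$, $f(1)>h(1)$). On $[0,p_0]$ the supersolution implication is vacuous since $V=f$; on $(p_0,1]$, using \eqref{quadratic} one checks that $rV(p)+((a+b)p-b)s$ is affine in $p$, vanishes at $p_0$ and has positive slope $(r+a+b)s$, hence is $\ge 0$. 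For the subsolution implications one checks only $Ext(V)=\{0,p_0,1\}$: at $p=1$ it is vacuous since $V(1)=h(1)$; at $p=p_0$ it holds with equality, again by \eqref{quadratic}; and at $p=0$ it reads $rf(0)\le b(f(1)-f(0))$, which follows from $f(0)<\frac{b}{b+r}h(1)\le\frac{b}{b+r}f(1)$.

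Cases (ii) and (iii) are analogous but simpler, the candidate being globally affine so that $Ext(V)=\{0,1\}$ and concavity and Lipschitz continuity are immediate. In case (iii), $V=h$, so every subsolution implication is vacuous (never $V>h$), while the supersolution ones, active everywhere since $h<f$, reduce on $[0,1]$ to $rh(p)+((a+b)p-b)(h(1)-h(0))\ge 0$; this is affine in $p$ with positive slope $(r+a+b)(h(1)-h(0))>0$ by (H2), and its value at $p=0$ equals $(b+r)h(0)-bh(1)$, which is $\ge 0$ precisely by the hypothesis $\frac{b}{b+r}h(1)\le h(0)$. In case (ii), writing $V(p)=\frac{h(1)}{b+r}(b+rp)$, the bounds $h\le V\le f$ follow from the endpoint inequalities $h(0)<V(0)=\frac{b}{b+r}h(1)\le f(0)$ and $V(1)=h(1)\le f(1)$ together with affinity; the supersolution expression equals $\frac{rh(1)}{b+r}(r+a+b)p\ge 0$; and the only non-vacuous subsolution check, at $p=0$ where $V(0)>h(0)$, yields $rV(0)=bV'(0)$, hence $\le 0$.

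The real difficulty here is bookkeeping rather than analysis: one must track carefully, case by case and point by point, which of the four implications in \eqref{exampleeq1} are active --- the subsolution ones only at the finitely many extreme points of $V$ and only where $V$ strictly exceeds $h$, the supersolution ones only where $V$ is strictly below $f$ --- and, relatedly, justify the identification of $Ext(V)$: since $f$ and the chords are affine, $V$ is strictly concave only at its kink, so the subsolution property need not be verified on the (relatively large) affine pieces. The quadratic \eqref{quadratic} in case (i) is arranged exactly so that the relevant inequalities degenerate to equalities at $p_0$, which is what makes the verification there go through cleanly.
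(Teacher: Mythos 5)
Your verification argument is correct and is precisely the route the paper takes (the paper states that the proposition ``follows by verification by checking that \eqref{exampleeq1} holds'' and only sketches a direct derivation); you supply the details it omits, notably the monotonicity argument for existence and uniqueness of $p_0$ in $(0,p^*)$ and the reduction of the subsolution checks to the finitely many extreme points of the piecewise affine candidate. One small remark: on $(p^*,1]$ you (correctly) work with the form $rV(p)+((a+b)p-b)V'(p)\ge 0$ coming directly from Theorem \ref{thmextremal}, which is consistent with the first line of \eqref{exampleeq1} and with the case conditions of the proposition, and the left/right derivative distinction is immaterial on the interiors of the affine pieces.
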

\p

\begin{figure}[!h]
\centering
\includegraphics[width=90mm]{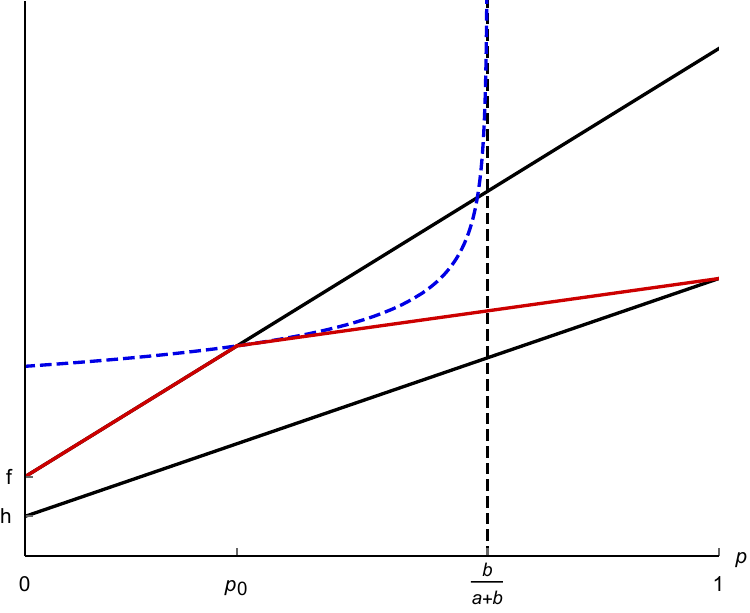}
\caption{Value function in case (i)} \label{figure2}
\end{figure}

The proof of Proposition \ref{valueexample} follows by verification by checking that \eqref{exampleeq1} holds. 
One may also easily obtain the solution by a direct approach. 
Indeed, note that all positive solutions to
\begin{equation}\label{ODE}
rw(p)+(b-(a+b)p)w'(p)=0
\end{equation}
are strictly convex, so that using $(H1)$ and \eqref{exampleeq1}, we may prove that $V(p)$ necessarily is a concave and piecewise affine function, having no extreme point in $\{h<V<f\} \cap (0,1)$. Using then $(H2)$ and \eqref{exampleeq1} applied in $p=1$ (which is always an extreme point), we deduce that $V(1)=h(1)$, and \eqref{exampleeq1} applied in $p=0$ allows to compute $V(0)$. The problem is solved in cases $(ii)$ and $(iii)$. For case $(i)$, we have $V(0)=f(0)$, and \eqref{exampleeq1} gives a condition for the left or right derivative of $V$ in the only interior extreme point $p_0$, which is \eqref{quadratic}. Figure \ref{figure2} illustrates the shape of the function $V$ (in red) in case $(i)$. $p_0$ is the unique point in $(0,p^*)$ such that the solution of \eqref{ODE} passing through $(p_0,f(p_0))$ (the blue dotted line) is tangent to the line joining $(p_0,f(p_0))$ and $(1,h(1))$.\\

{Let us give an intuition for the above result.  Recall that $p$ is the probability that the chain starts in state $0$. Therefore, $h(1)$ is the payoff of player $1$ if he stops when the chain is in state $0$. The quantity $\frac{b}{b+r}h(1)$ is the expected payoff of player $1$ when the chain starts in state $1$ if he stops at the first time the chain jumps in state $0$ while player $2$ does not stop. Therefore if $\frac{b}{b+r}h(1) \leq  h(0)$ (case $(iii)$), there is no gain to wait for the chain to jump in state $0$ for player $1$. It results that at equilibrium, player $1$ will stop immediately, independently of the position of the chain while player $2$ will never stop. If we have $h(0)<\frac{b}{b+r}h(1)$, then there is a gain for player $1$ to wait for the chain to jump in state $0$, and he will stop only when the chain is in state $0$. In case $\frac{b}{b+r}h(1) \leq f(0)$ (case $(ii)$), this gain is not sufficient for player $2$ to stop, and at equilibrium player $1$ will stop at the first time the chain reaches the state $0$ while player $2$ will never stop. If $\frac{b}{b+r}h(1) > f(0)$ (case $(i)$), the situation is more interesting. If player $2$ knows  that the chain is in state $1$, he will stop immediately to obtain the payoff $f(0)$. If player $1$ stops at the first time the chain reaches the state $0$, this strategy is completely revealing in the sense that player $2$ learns immediately that the chain is in state $1$ by observing that player $1$ did not stop at time $0$, and will stop at time $0+$ in this case. This strategy guarantees to player $1$ the payoff $(1-p)f(0)+ph(1)$ which is strictly less than the above formula given for the value. Player $1$ has therefore to take care of the beliefs induced by his strategy. His optimal strategy is formally given in the next section and can be described as follows: if the belief of player $2$ is below $p_0$, he waits until his belief reaches the value $p_0$. If the belief is above $p_0$, he stops with positive probability conditionally on the fact that $X_0=0$, and this probability is such that the belief of player $2$ given that player $1$ did not stop is equal to $p_0$. If the belief of player $2$ is equal to $p_0$, then player $1$ stops with a positive intensity $\lambda_1$ conditionally  on the fact that $X_0=0$, and $\lambda_1$ is such that the belief of player $2$ given that player $1$ did not stop remains constant over time equal to $p_0$. Knowing the value function and the strategy we just described, it is not difficult to check directly that this strategy is optimal. In the next subsection, we show how to find directly this optimal strategy by applying Theorem \ref{suffcientbothsides}.
}

\begin{remark}\label{solutionblind}
In case $(i)$, the solution $S(p)$ of the game where none of the players observe the Markov chain $X$ is easily obtained by solving the obstacle problem \eqref{pointstandard}. Precisely, we obtain that $S=f$ on $[0,p_1)$, $S=h$ on $[p_2,1]$ and that $S$ solves the ODE
\[rS(p)+((a+b)p - b)S'(p)=0 \] 
on the interval $[p_1,p_2]$ where $p_1<p_0<p_2<p^*$. The points $p_1,p_2$ are obtained using that $S$ has to be differentiable at $p_2$ (the ``smooth fit'' condition).  Note that despite the characterizations for $S$ and $V$ rely on the same variational inequalities, the convexity constraints for $V$ and the fact that the variational inequalities apply only at extreme points imply that the shape of $V$ significantly differs from that of $S$. 
\end{remark}

The solution in case $(ii)$ implies that it is optimal for player 1 to wait for the more favorable state $0$ before he stops, while the solution in case $(iii)$ implies that it is optimal for player 1 to stop immediately. The optimal strategy for the more interesting case $(i)$ is detailed in the next section.\\

\subsubsection{Optimal Strategy for the informed player}

We construct an optimal strategy for the  player 1 in case that $(H1)$ and $(H2)$ and the condition $(i)$ of Proposition \ref{valueexample} are valid.
To construct optimal stopping times we will use  Theorem \ref{suffcientbothsides}. As the parameter $q \in \Delta(L)$ plays no role here ($L$ is a singleton), we let the reader verify that Theorem \ref{suffcientbothsides} applies directly to the function $-V$ in place of $V^*$ since formally $\Delta(L)=\{1\}$ and for $y \in \RR$
\[ V^*(p,y)= y - V(p).\]
As a result, the component $\xi$ of the PDMP $Z$ plays absolutely no role, and can be replaced by $0$. In the following, we suppress the component $\xi$ and we will construct a PDMP $\pi$ having the required properties.

\p
The reader can easily verify that the set $\CH$ is given by $\CH=[0,p_0]$, where $p_0$ is given by \eqref{quadratic} and that $\mathcal{S}=\{1\}$.
\p
Let us construct the characteristics $(\alpha,\lambda,\phi)$ of the PDMP $\pi$ meeting the structure conditions $(SC)$ of Definition  \ref{structure}. The state space of $\pi$ will be $E=E_{\CH}\cup\CS$ with $E_{\CH}=\CH$.
At first the structure condition \eqref{structuredynamic} becomes
\[ \forall p \in [0,p_0], \;\lambda(p) (\phi(p)-p) +\alpha(p)= -a p+b(1-p).\] 
\p
We define $\alpha(p)=-a p+b(1-p)$ and $\lambda(p)=0$ for $p \in[0,p_0)$. For the  point $p_0$, we need to have a positive intensity of jump since the solution of $w'(t)= -a w(t)+b(1-w(t))$ starting at $p_0$ exits $\CH$ immediately. Thanks to $(SC3)$, the jump has to end in $\CS$ so that we define $\phi(p_0)=1$. The above condition becomes
\[ \alpha(p_0)=-a p_0+b(1-p_0)-\lambda(p_0)(1-p_0).\]
The condition \eqref{conederivative} requires $V$ to be differentiable in the cone generated by $\alpha(p_0)$ and $\lambda(p_0)(1-p_0)>0$. As $V$ is not differentiable at $p_0$, this implies $\alpha(p_0)\geq 0$ so that the condition becomes equivalent to the existence of a right-derivative. On the other hand, the fact that $\CH$ has to be invariant by the flow of the ODE $w'=\alpha(w)$ implies $\alpha(p_0)\leq 0$. The only possible choice is thus $\alpha(p_0)=0$ and $\lambda(p_0)=\frac{b-(a+b) p_0}{1-p_0}$.
\p
The next proposition describes optimal stopping times for all initial $p\in [0,1]$ obtained by applying Theorem \ref{suffcientbothsides}.


\begin{proposition}
Assume that $(H1)$ and $(H2)$ and the condition $(i)$ of Proposition \ref{valueexample} are valid. An optimal strategy  $\mu:\Omega\times[0,1]\rightarrow [0,+\infty)$ for player 1 is given by the following expressions.
\begin{itemize}
\item[(i)] For $p=p_0$:
\[ \mu(\omega,u):= \inf\left\{ t\geq 0 | \int_0^t \indic_{X_s=0}ds \geq \frac{- \ln (1-u)}{\lambda_1}\right\}, \;\text{with} \; \lambda_1:=\frac{b - (a+b)p_0}{p_0(1-p_0)} >0. \]
That means player $1$ stops with intensity $\lambda_1$ conditionally on  $X_t=0$.
\item[(ii)] For $p>p_0$: \[ \mu(\omega,u):= \left\{ \begin{matrix}  0 & \text{if} & X_0=0 \; \text{and} \; u\in[0, c(p)) \\ 
\inf\left\{ t\geq 0 | \int_0^t \indic_{X_s=0}ds \geq \frac{- \ln (1-\tilde{u})}{\lambda_1} \right\}  &  \text{otherwise,}    \end{matrix} \right.    \]
where $c(p):=\frac{p-p_0}{p(1-p_0)}$ and $\tilde{u}:=\frac{u-c(p)}{1-c(p)}\indic_{X_0=0}+ u \indic_{X_0=1}$.\\
That means that at time $t=0$, player $1$ stops with probability $\frac{p-p_0}{p(1-p_0)}$ if $X_0=0$, and if he did not stop at $t=0$, he stops with intensity $\lambda_1$ conditionally on the fact that $X_t=0$.
\item[(iii)]  For $p<p_0$:
\[ \mu(\omega,u):= \inf\left\{ t\geq t(p) | \int_{t(p)}^t \indic_{X_s=0}ds \geq \frac{- \ln (1-u)}{\lambda_1}\right\},    \]
where $t(p):= \inf \{t \geq 0 | \PP_p(X_t=0)=p_0\}$.\\
That means that player $1$ waits until the probability that  player 2 assigns to the event $X_t=0$ reaches $p_0$  and then stops with intensity $\lambda_1$ conditionally on the fact that $X_t=0$.
\end{itemize}
\end{proposition}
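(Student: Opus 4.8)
The proof is an application of the verification Theorem~\ref{suffcientbothsides} to the characteristics $(\alpha,\lambda,\phi)$ constructed in the paragraph preceding the statement, on the state space $E=E_{\CH}\cup\CS$ with $E_{\CH}=\CH=[0,p_0]$ and $\CS=\{1\}$; as recalled there, since $L$ is a singleton the $\xi$-component of the PDMP is inert and we may work with $Z=\pi$ alone. The first step is to check that $(\alpha,\lambda,\phi)$ satisfies the structure conditions $(SC1)$--$(SC5)$ of Definition~\ref{structure}. Most of these are immediate: $(SC2)$ holds because $\alpha(p_0)=0$ makes $[0,p_0]$ invariant for the flow of $\alpha$ (started below $p_0$ that flow is $t\mapsto\PP_p(X_t=0)$, which increases and is absorbed at $p_0$) and $1$ is absorbing by construction; $(SC3)$ holds with the single jump $\phi(p_0)=1$, the support condition being vacuous at $p_0\in(0,1)$; \eqref{structuredynamic} is the very relation used to define $\lambda(p_0)$ and $\alpha(p_0)$, and \eqref{conederivative} is automatic since $\lambda\equiv0$ on $[0,p_0)$ while $\alpha(p_0)=0$. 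The conditions that require the explicit solution of Proposition~\ref{valueexample}(i) are: $(SC1)$, i.e. the identities $\CH=[0,p_0]$ and $\CS=\{1\}$, which follow from a direct sign analysis of $-rV(p)+(b-(a+b)p)V'_r(p)$, this quantity vanishing at $p_0$ by the tangency condition \eqref{quadratic} and changing sign there; \eqref{flatjump}, which holds because $V$ is affine on $[p_0,1]$ so the jump from $p_0$ to $1$ runs over a flat part of the graph of $V^*$; and $(SC5)$, which holds because, $V$ being affine on $[p_0,1]$, every $p\in(p_0,1)$ satisfies $(p,V(p))=(1-m)(p_0,V(p_0))+m(1,V(1))$ with $m=\frac{p-p_0}{1-p_0}$, $p_0\in E_{\CH}$ and $1\in\CS$.

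With $(SC)$ verified, the three cases of the proposition correspond to the three cases of Theorem~\ref{suffcientbothsides}, and it remains to translate the abstract stopping rule into the stated formulas. For $p=p_0$ (Case~1) the flow is constant, $z_t\equiv p_0$, since $\alpha(p_0)=0$; the per-state stopping intensity $\frac{\phi_p(z_s)_k}{(p_s)_k}\lambda(z_s)$ of the theorem is $0$ in state $1$ (hence $M(t,1)\equiv+\infty$) and equal to the constant $\lambda_1=\lambda(p_0)/p_0=\frac{b-(a+b)p_0}{p_0(1-p_0)}$ in state $0$, so $M(t,0)$ is an $\mathrm{Exp}(\lambda_1)$ clock; assembling $\mu_z=\inf_n\mu_n$ and using that $-\ln(1-U)/\lambda_1$ is $\mathrm{Exp}(\lambda_1)$ for $U$ uniform gives precisely formula (i), namely Player~1 running an exponential clock of rate $\lambda_1$ against the occupation time of state $0$. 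For $p<p_0$ (again Case~1) the flow $z_t=\PP_p(X_t=0)$ is nonconstant and $\lambda\equiv0$ until $z_t$ reaches $p_0$ at the deterministic time $t(p)=\inf\{t\ge0:\PP_p(X_t=0)=p_0\}$, after which the previous description applies; this yields formula (iii). For $p>p_0$ we have $p\notin E$ (Case~2), with the splitting $z'=p_0$, $z''=1$, $m=\frac{p-p_0}{1-p_0}$ read off from $(SC5)$; the time-$0$ randomization stops with total probability $m$, and because the post-jump belief on $\{\mu=0\}$ must be the Dirac mass at state $0$, a time-$0$ stop can occur only when $X_0=0$, so Bayes' rule forces the conditional stopping probability $c(p)=m/p=\frac{p-p_0}{p(1-p_0)}$; on the complement Player~1 follows $\mu_{z'}=\mu_{p_0}$ of case (i), which is formula (ii).

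The genuinely substantive step — exhibiting characteristics $(\alpha,\lambda,\phi)$ with the required structural properties — has already been performed above the statement, so what is left is verification together with translation, both routine once Theorem~\ref{suffcientbothsides} is in hand. The points needing some care are: the sign analysis giving $\CH=[0,p_0]$, which relies on \eqref{quadratic}; the identity $\lambda(z_t)=\pi_t\lambda_1$ relating the PDMP jump intensity to Player~1's conditional-on-$\{X_t=0\}$ stopping intensity, needed for the exponential-clock description to be compatible with \eqref{structuredynamic}; and, in Case~2, the observation that the conditional law of $X_0$ on the stopping event must be $\delta_{\{0\}}$, which is what pins down both the conditioning on $\{X_0=0\}$ and the exact constant $c(p)$. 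The last of these is the step most easily gotten wrong; none of them constitutes a real obstacle.
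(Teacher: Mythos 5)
Your proposal is correct and follows exactly the route the paper intends: the paper offers no separate proof of this proposition beyond the construction of $(\alpha,\lambda,\phi)$ in the preceding paragraphs and the phrase ``obtained by applying Theorem~\ref{suffcientbothsides}'', and your argument fills in precisely those omitted details --- the verification of $(SC1)$--$(SC5)$ (using the tangency condition \eqref{quadratic} for $\CH=[0,p_0]$ and the affineness of $V$ on $[p_0,1]$ for \eqref{flatjump} and $(SC5)$) and the translation of the three cases of the verification theorem into the stated formulas. In particular your Bayes computation pinning down $c(p)=m/p=\frac{p-p_0}{p(1-p_0)}$ in case (ii), via the requirement that the posterior on $\{\mu=0\}$ be $\delta_0$, is exactly what is needed to reconcile Case~2 of Theorem~\ref{suffcientbothsides} with formula (ii) of the proposition.
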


Note that the same method can be applied to compute explicitly optimal strategies for player $2$ in this example. These solutions being quite similar to the previously studied cases,  they are not presented here.

\section{Open questions}\label{sectionopen}

{The first natural question is to which class of games our verification result (Theorem \ref{suffcientbothsides}) applies. We actually conjecture that it may be applied to any instance of our model, but all the examples in which we were able to use it share the property that $V$ and $V_*$ admits a smooth stratification. Once we know that such a stratification exists, we think that the method used in section \ref{sectionexample} to construct the characteristics $(\alpha,\lambda,\phi)$ satisfying the required assumptions should work. However, we do not know how to handle the problem of existence of a smooth stratification for the value function, and such stratification problems are known to be very difficult in geometry.}

{A possible extension} is to investigate the case where the Markov processes $X$ and  $Y$ have infinite state space, e.g. diffusion processes. The main difficulty is that our approach leads formally to study partial differential equations in infinite dimensional spaces of probabilities. The only results in this direction consider differential games where the information parameters of the game do not evolve over time (see Cardaliaguet and Rainer \cite{carda12}) {and more recently a game where only one player observes a Brownian motion (see Gensbittel and Rainer \cite{obsbrown})}.

Another interesting question is whether our methods can be adapted to consider Markov chains $(X,Y)$ which are correlated. Indeed, in our proof of the Dynamic Programming principle the independence plays an important role, since it allows to detach the two dynamics in (\ref{uncorrelated}). The case of correlated information in the information asymmetry is static has been studied in \cite{Olioubarton}. Its generalization to an evolving setting is far from being obvious and an interesting subject for further studies.

In view of possible applications to stopping games arising e.g. in financial mathematics, one may also consider models with publicly observed diffusive dynamics. The particular case  where the information parameters were not evolving was considered in \cite{grunstopping}, {and the case with symmetric incomplete information on the drift parameter of a diffusion was analyzed by De Angelis, Gensbittel and Villeneuve \cite{angelis1}}. A generalization of these results is an interesting point for further research.

We only consider the values of discounted games, and a natural question is whether there exists a limit value when the discount factor $r$ vanishes. In the particular case where the Markov chains are constant (i.e. $R=0$ and $Q=0$), one sees that the value function is independent of $r$, and thus the limit value exists and is characterized by the same variational inequalities (note however that the optimal strategies depend on $r$, see the first example in section \ref{sectionexample}). In the second example of section \ref{sectionexample}, it is easily seen from Proposition \ref{valueexample} that a limit value exists, but we do not know if this phenomenon is general and if we can obtain a characterization of the limit value whenever it exists. Such a characterization was provided in Gensbittel and Renault \cite{GensbittelRenault} for the limit value of Markov chain games with incomplete information on both sides, and it relies on the distinction between hard information which tells something about the recurrence class to which the chain belongs, and soft information which may only inform about the relative position of the chain within the different recurrence classes. In a repeated game, soft information is renewable in the sense that if player $1$ reveals soft information, he can wait until the marginal distribution of the chain is close to an invariant measure within each recurrence class. 
{Such a distinction may be useful to analyze the existence and characterization of the limit value in our model when there are several recurrence classes, and this is an interesting point for further research.}

{In several models of timing games, such as strategic experimentation models (see e.g. Rosenberg, Solan and Vieille \cite{RSV} and the references therein) the players receive a flow of payoffs instead of a terminal payoff. Even if the natural framework of these problems is a non zero-sum game, one may consider by analogy a zero-sum model with integral payoffs given by 
\begin{equation*}
J^1(\mu,\nu)(\omega,u,v):= \int_0^\nu e^{-r s} f(X_{s},Y_{s})ds \indic_{\nu <\mu}+\int_0^\mu e^{-r s} h(X_{s},Y_{s})ds \indic_{\mu \leq\nu}.
\end{equation*}
The extension of our results to this model is left for future research. }

{At last, the extension of our result to non zero-sum models is a challenging and very interesting problem, which may allow to handle strategic experimentation models or real option problems with incomplete information. Although the study of Nash equilibria in non zero-sum games generally differs drastically from the study of optimal strategies in zero-sum games, it appears that in different models of non zero-sum stopping games, equilibrium strategies have some similarities with the optimal strategies of zero-sum stopping games. For example, in the recent work of De Angelis, Ferrari and Moriarty \cite{angelis2}, war of attrition type of non zero-sum games are considered, and the variational characterization of these equilibria using threshold strategies is very close to that of the corresponding zero-sum model. Ekstrom, Glover and Leniec \cite{ekstrom} consider a zero-sum stopping game with heterogeneous prior beliefs (which is equivalent to a non zero-sum stopping game) and provide an example of equilibrium in mixed stopping times, where a player stops with some varying intensity, exactly as in our Theorem \ref{suffcientbothsides}. These similarities suggest to study the problem of existence of equilibria in mixed strategies in non zero-sum stopping games with incomplete information using variational methods and strategies having the same structure as the optimal strategies described in Theorem \ref{suffcientbothsides}.
}

\p
\textbf{Acknowledgment:} The authors would like to thank the associate editor and an  anonymous referee for their pertinent comments and helpful suggestions.

\appendix
\section{Technical proofs and auxiliary tools}

\subsection{Auxiliary results of convex analysis}\label{auxiliary}

We prove here some elementary results in convex analysis that we are using in the proof of Proposition \ref{V*supersolbothsides2}. The following lemmas are easy adaptations of classical and well-known results to Lipschitz convex functions with polyhedral domains. As references covering exactly what we need were difficult to find, we decided to add this appendix for the convenience of the reader. Note that we do not try to provide the most general version of these lemmas. 
\p

\begin{lemma}\label{superdiff}
Let $f$ be a Lipschitz convex function from $C$ to $\RR$. Assume that $C$ is a polyhedron, then if $T_C(x)$ denotes the tangent cone of $C$ at $x$:
\[ \forall x\in C, \forall v\in T_C(x), \vec{D}f(x;v) =\max_{u \in \partial^- f(x)} \langle u, v \rangle .\]    
\end{lemma}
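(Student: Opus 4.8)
The plan is to prove the formula $\vec{D}f(x;v) = \max_{u \in \partial^- f(x)} \langle u,v\rangle$ for Lipschitz convex $f$ on a polyhedron $C$, $x \in C$, $v \in T_C(x)$, by reducing to the classical Fenchel--Moreau--Rockafellar duality for the directional derivative of a convex function.

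First I would recall that for a convex function $f$ the difference quotient $t \mapsto \frac{f(x+tv)-f(x)}{t}$ is nondecreasing in $t>0$, so the directional derivative $\vec{D}f(x;v) = \inf_{t>0}\frac{f(x+tv)-f(x)}{t}$ exists; since $v \in T_C(x)$ and $C$ is polyhedral, the ray $x+tv$ actually stays in $C$ for all small $t>0$ (this is where polyhedrality is used: the tangent cone of a polyhedron at $x$ coincides with the set of feasible directions, i.e. directions $v$ such that $x+tv \in C$ for small $t$), and since $f$ is Lipschitz the quotient is bounded, so $\vec{D}f(x;v)$ is a finite real number. Next, the map $v \mapsto \vec{D}f(x;v)$ is sublinear (positively homogeneous and subadditive) on the cone $T_C(x)$, and by the Lipschitz bound it is finite and Lipschitz there. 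The inequality $\vec{D}f(x;v) \geq \langle u,v\rangle$ for every $u \in \partial^- f(x)$ is immediate from the subgradient inequality $f(x+tv) \geq f(x) + t\langle u,v\rangle$ divided by $t$ and letting $t \to 0$; hence $\vec{D}f(x;v) \geq \sup_{u \in \partial^- f(x)}\langle u,v\rangle$ and, $\partial^- f(x)$ being compact (bounded because $f$ is Lipschitz, closed by definition), the supremum is attained and is a max.

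For the reverse inequality I would fix $v \in T_C(x)$ and argue by a Hahn--Banach / supporting-hyperplane separation: the value $\vec{D}f(x;v)$, being the value of a sublinear functional, is dominated on the whole space once we suitably extend. Concretely, extend $g(\cdot) := \vec{D}f(x;\cdot)$ from $T_C(x)$ to all of $\RR^K$ by the positively homogeneous convex function $\tilde g(w) := \inf\{ g(w') + M|w-w'| : w' \in T_C(x)\}$ for $M$ the Lipschitz constant of $f$; this $\tilde g$ is sublinear, finite, and agrees with $g$ on $T_C(x)$ because $g$ is $M$-Lipschitz there. By the standard fact that a finite sublinear functional is the support function of its subdifferential at the origin, there is $u_0 \in \RR^K$ with $u_0 \leq \tilde g$ pointwise and $\langle u_0,v\rangle = \tilde g(v) = \vec{D}f(x;v)$. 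It then remains to check $u_0 \in \partial^- f(x)$: for any $x' \in C$, writing $w = x'-x \in T_C(x)$, convexity of $f$ gives $f(x') - f(x) \geq \vec{D}f(x;x'-x) \geq \langle u_0, x'-x\rangle$, which is exactly the defining inequality of $\partial^- f(x)$. This produces $u_0 \in \partial^- f(x)$ attaining $\langle u_0,v\rangle = \vec{D}f(x;v)$, completing the equality.

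The main obstacle I anticipate is the careful handling of the polyhedral boundary: one must be sure that $T_C(x)$ is exactly the cone of feasible directions (true for polyhedra but false for general convex sets), that the extension $\tilde g$ off the cone does not shrink the relevant values, and that the separating functional $u_0$ can be taken in the ambient space while still satisfying the subgradient inequality against all of $C$ (not merely in directions inside the cone) — here convexity of $f$ on $C$ together with $x'-x \in T_C(x)$ for all $x' \in C$ is what saves us. Everything else is routine one-dimensional convexity plus the support-function representation of sublinear functionals.
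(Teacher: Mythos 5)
Your proof is correct, but the second half takes a genuinely different route from the paper. The paper extends $f$ itself to all of $\RR^n$ by the Moreau--Yosida regularization $f_m(y)=\inf_{y'\in C}\{f(y')+m|y-y'|\}$ (with $m$ at least the Lipschitz constant), invokes the classical max formula (Rockafellar, Theorem 23.4) for the globally finite convex function $f_m$, and then transfers the conclusion back to $f$ via the inclusion $\partial^- f_m(x)\subset\partial^- f(x)$, which holds because $f_m\le f$ with equality at $x$. You instead extend the sublinear function $g=\vec{D}f(x;\cdot)$ from the tangent cone to all of $\RR^K$ by the same infimal-convolution device, represent the extension as a support function (Hahn--Banach), and then verify by hand, using the monotonicity of difference quotients and the fact that $x'-x\in T_C(x)$ for all $x'\in C$, that the exposing functional $u_0$ is a genuine element of $\partial^- f(x)$. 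Both arguments use polyhedrality in the same place (so that every $v\in T_C(x)$ is a feasible direction, making $\vec{D}f(x;v)$ computable inside $C$ and forcing $\vec{D}f=\vec{D}f_m$ there). Your version is more self-contained --- it only needs the support-function representation of a finite sublinear functional rather than the full max formula for subdifferentials --- at the cost of having to check explicitly that $u_0$ satisfies the subgradient inequality against all of $C$; the paper's version is shorter but outsources that step to the inclusion of subdifferentials of the regularized function.
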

\begin{proof}
This relation holds for any point $x$ in the relative interior of $C$ (see Theorem 23.4 in \cite{rockafellar}). For $x$ in the relative boundary of $C$, the left hand-side of the above equality is always greater than the right-hand side using the definition of subgradients.  
\p
For $m \in \NN^*$ with $m \geq M$, where $M$ is the Lipschitz constant of $f$, define the Moreau-Yosida regularization 
\[ \forall x\in \RR^n, \; f_m(x):= \inf_{y \in C} f(y) +m|y-x| .\]
The function $f_m$ is convex, $m$-Lipschitz (so that subgradients of $f_m$ are uniformly bounded by $m$) and coincides with $f$ on $C$. 
As $f_m \leq f$, we have for all $x \in C$, $\partial^- f_m(x) \subset \partial^-f(x)$. For any $v \in T_C(x)$, we have that $x+tv \in C$ for all sufficiently small $t>0$ (here we use the polyhedron assumption) and therefore:
\[ \vec{D}f(x;v)= \vec{D} f_m(x;v) =   \max_{u \in \partial^- f_m(x)} \langle u,v \rangle\leq \sup_{u \in \partial^- f(x)} \langle u, v \rangle  .\]
\end{proof}

\begin{lemma}[Danskin] \label{danskinlemma}
Let $P$ be a non-empty compact subset of $\RR^m$ and $C$ a non-empty polyhedron in $\RR^n$. Let $f$ be a real-valued Lipschitz function defined on $P\times C$. Assume that for all $p \in P$, the function $f_p$ defined on $\RR^n$ by $f_p(x)=f(p,x)$ is a convex function. Define $g(x)=\sup_{p \in P}f(p,x)$. Then, if for $\bar{x} \in C$, the maximum $\max_{p \in P} f(p,\bar{x})$ is uniquely attained in $\bar{p}$, we have 
\[ \partial^- g(\bar{x}) = \partial^- f_{\bar{p}}(\bar{x}) .\] 
\end{lemma}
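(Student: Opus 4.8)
The plan is to reduce the statement to an equality of one-sided directional derivatives and then invoke a standard characterization of the subdifferential of a convex function on a polyhedron. First I would record the elementary facts used throughout: $g=\sup_{p\in P}f(\cdot,p)$ is convex (a supremum of convex functions) and Lipschitz with the Lipschitz constant of $f$; for every $x\in C$ the supremum defining $g(x)$ is attained because $P$ is compact and $f$ continuous; and by hypothesis $g(\bar x)=f(\bar p,\bar x)=f_{\bar p}(\bar x)$, the maximizer being unique. I would also use that for a Lipschitz convex function $\phi$ on the polyhedron $C$ one has $u\in\partial^-\phi(\bar x)$ if and only if $\langle u,v\rangle\le \vec D\phi(\bar x;v)$ for every $v$ in the tangent cone $T_C(\bar x)$: the ``only if'' part is the subgradient inequality read along feasible directions, while the ``if'' part follows from $\vec D\phi(\bar x;x'-\bar x)\le \phi(x')-\phi(\bar x)$, a consequence of convexity (monotonicity of difference quotients). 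Applying this to $\phi=g$ and to $\phi=f_{\bar p}$, it suffices to prove that $\vec D g(\bar x;v)=\vec D f_{\bar p}(\bar x;v)$ for all $v\in T_C(\bar x)$.

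One of the two inequalities is immediate: since $f_{\bar p}\le g$ on $C$ with equality at $\bar x$, for $v\in T_C(\bar x)$ and $t>0$ small (so that $\bar x+tv\in C$, using that $C$ is a polyhedron) we have $f_{\bar p}(\bar x+tv)-f_{\bar p}(\bar x)\le g(\bar x+tv)-g(\bar x)$, and dividing by $t$ and passing to the limit gives $\vec D f_{\bar p}(\bar x;v)\le \vec D g(\bar x;v)$.

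The substantive step is the reverse inequality $\vec D g(\bar x;v)\le \vec D f_{\bar p}(\bar x;v)$, where one must deal with a ``moving'' maximizer. Fix $v\in T_C(\bar x)$ and $\delta>0$ with $\bar x+tv\in C$ for all $t\in[0,\delta]$. For each $t\in(0,\delta]$ choose $p_t\in P$ with $g(\bar x+tv)=f(p_t,\bar x+tv)$. Using $f(p_t,\bar x)\le g(\bar x)=f_{\bar p}(\bar x)$ and then the monotonicity of the difference quotients of the convex function $f_{p_t}$, one obtains, for $0<t\le t'\le\delta$,
\[ \frac{g(\bar x+tv)-g(\bar x)}{t}\le \frac{f_{p_t}(\bar x+tv)-f_{p_t}(\bar x)}{t}\le \frac{f_{p_t}(\bar x+t'v)-f_{p_t}(\bar x)}{t'}. \]
A Berge-type argument shows $p_t\to\bar p$ as $t\to0^+$: any cluster point $p_*$ satisfies $f(p_*,\bar x)=\lim_{t\to0^+} g(\bar x+tv)=g(\bar x)$ by continuity of $f$ and $g$, hence $p_*$ maximizes $f(\cdot,\bar x)$ and so $p_*=\bar p$ by uniqueness. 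Keeping $t'$ fixed and letting $t\to0^+$ therefore gives $\vec D g(\bar x;v)\le \bigl(f_{\bar p}(\bar x+t'v)-f_{\bar p}(\bar x)\bigr)/t'$, and finally letting $t'\to0^+$ yields $\vec D g(\bar x;v)\le \vec D f_{\bar p}(\bar x;v)$. Combining the two inequalities gives the identity of directional derivatives on $T_C(\bar x)$, and hence $\partial^- g(\bar x)=\partial^- f_{\bar p}(\bar x)$.

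I expect the main obstacle to be precisely this last step: the maximizer $p_t$ depends on $t$, so $\bigl(g(\bar x+tv)-g(\bar x)\bigr)/t$ is not simply a difference quotient of a single convex function. The device that resolves it is to use convexity of $f_{p_t}$ to replace the numerator by an expression evaluated at a \emph{fixed} probe point $\bar x+t'v$, which depends on $p_t$ only through the continuous map $p\mapsto f(p,\bar x+t'v)$, and then to take the two limits in the order $t\to0$ first, $t'\to0$ second. Compactness of $P$ is used for attainment of the maxima and for the Berge argument, uniqueness of the maximizer to identify the limit $\bar p$, convexity of each $f_p$ on $\RR^n$ to use monotonicity of difference quotients freely, and the polyhedron assumption on $C$ to guarantee $\bar x+tv\in C$ for all small $t$ whenever $v\in T_C(\bar x)$.
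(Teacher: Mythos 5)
Your proof is correct, and its analytic core coincides with the paper's: in both arguments the key device is to bound the difference quotient of $g$ at step $t$ by the difference quotient of $f_{p_t}$ evaluated at a \emph{fixed} larger step (your $t'$, the paper's $s$), using monotonicity of difference quotients of the convex functions $f_p$, and then to send the moving maximizer $p_t\to\bar p$ (compactness of $P$ plus uniqueness of the maximizer, your Berge argument) before shrinking the probe step. Where you differ is in the outer packaging. The paper argues by contradiction: it picks $v\in\partial^- g(\bar x)\setminus\partial^- f_{\bar p}(\bar x)$, separates $v$ from the closed convex set $\partial^- f_{\bar p}(\bar x)$ by a vector $z$, shows that $z$ must lie in the tangent cone $T_C(\bar x)$ by adding large multiples of normal-cone vectors to subgradients, and then runs the difference-quotient computation along the single direction $z$ to contradict the separation inequality via Lemma \ref{superdiff}. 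You instead prove directly that $\vec{D}g(\bar x;v)=\vec{D}f_{\bar p}(\bar x;v)$ for every $v\in T_C(\bar x)$ and conclude from the characterization that $u\in\partial^-\phi(\bar x)$ if and only if $\langle u,v\rangle\le\vec{D}\phi(\bar x;v)$ for all $v\in T_C(\bar x)$, which is valid for a Lipschitz convex $\phi$ on a polyhedron and is essentially Lemma \ref{superdiff} combined with the subgradient inequality at $t=1$. Your route avoids the separation and normal-cone bookkeeping, and it delivers the slightly stronger conclusion that the one-sided directional derivatives of $g$ and $f_{\bar p}$ agree on the whole tangent cone; the price is invoking the support-function characterization of the subdifferential at boundary points of $C$, which the paper has in any case already established.
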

\begin{proof}
Note that both sets are non-empty due to the Lipschitz assumption, and that the inclusion $\partial^- f_{\bar{p}}(\bar{x}) \subset \partial^- g(\bar{x})$ is a direct consequence of the definitions. Let us prove the reverse inclusion. Assume by contradiction the there exists $v \in \partial^- g(\bar{x}) \setminus \partial^- f_{\bar{p}}(\bar{x})$. Then, using a separation argument, there exists $z \in \RR^n$  and $\varepsilon>0$ such that: 
\be \label{eqseparation}  \forall u \in \partial^- f_{\bar{p}}(\bar{x}) , \;\langle z, v \rangle \geq \varepsilon + \langle z, u \rangle .\ee
For all $u \in \partial^- f_{\bar{p}}(\bar{x}) $, $w$ in the normal cone of $C$ at $\bar{x}$ and $j \in \NN^*$, we have $u+j w   \in \partial^- f_{\bar{p}}(\bar{x}) $. Replacing $u$ bu $u+jw$ in \eqref{eqseparation}, and taking the limit as $j\rightarrow \infty$, we deduce that $\langle z,w \rangle \leq 0$, implying that  $z$ belongs to the tangent cone of $C$ at $\bar{x}$. Since $C$ is a polyhedron, $\bar{x}+tz \in C$ for all $t \in (0, \alpha)$ for some $\alpha >0$. Up to replace $z$ by $\alpha z$, we may assume that $\bar{x}+z \in C$.  Let $p_n$ be  a sequence maximizing  $f(p,\bar{x}+ \frac{z}{n})$. The sequence  $p_n$ converges to $\bar{p}$ using the continuity of $f$ and $g$. For $s \in (0,1)$ and $n$ such that $\frac{1}{n} \leq s$ we have 
\[ \frac{f(p_n,\bar{x}+sz)- f(p_n,\bar{x})}{s} \geq \frac{f(p_n,\bar{x}+\frac{z}{n})- f(p_n,\bar{x})}{n^{-1}} \geq \frac{g(\bar{x}+\frac{z}{n})-g(\bar{x})}{n^{-1}} \geq \langle z,v \rangle .\]
Letting $n$ go to $+\infty$, we deduce that for all $s\in(0,1)$, 
\[ \frac{f( \bar{p},\bar{x}+sz)- f(\bar{p},\bar{x})}{s} \geq \langle z,v \rangle.\]
Taking the limit when $s\rightarrow 0+$ and using the preceding lemma, we conclude that
\[ Df_{\bar{p}}(\bar{x},z)=\sup_{u \in \partial^- f_{\bar{p}}(\bar{x})} \langle z,u \rangle \geq \langle z, v \rangle,\]
which is a contradiction.
\end{proof}

Before stating the next lemma, let us recall the definition of an exposed point.
\begin{definition}\label{exposeddef}
Let $f:\RR^m \rightarrow \mathbb{R}\cup\{+\infty\}$ be a convex function. The set of exposed points of $f$ is defined is defined as the set of all $x \in \RR^n$ such that there exists $x^* \in \partial^-f(x)$ such that 
\[ \forall x'\neq x,  f(x') > f(x)+\langle x^*,x'-x\rangle.\]
\end{definition}

\begin{lemma}\label{exposed}
Let $C \subset \RR^m$ be a compact polyhedron and $f: C \rightarrow \RR$ a convex Lipschitz function. If $x$ is an extreme point of $f$ and $z$ in the tangent cone of $C$ at $x$, then there exists a sequence $x_n$ of exposed points of $f$ with limit $x$ such that
\[ \vec{D}f(x_n;z) \rightarrow \vec{D}f(x;z). \]
\end{lemma}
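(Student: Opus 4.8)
The plan is to reduce the claim to the well-known fact that exposed points are dense in the set of extreme points (Theorem~18.6 in \cite{rockafellar}), and then to upgrade this density statement so that it also controls the directional derivative in the fixed direction $z$. First I would recall that, since $C$ is a compact polyhedron, the epigraph of $f$ (intersected with a bounded slab) is a compact convex set, so $x$ being an extreme point of $f$ means that $(x,f(x))$ is an extreme point of $\mathrm{epi}(f)$, and exposed points of $f$ correspond to exposed points of this compact convex body. By Straszewicz's theorem, the exposed points are dense in the extreme points; hence there is a sequence $x_n$ of exposed points of $f$ with $x_n\to x$. This gives everything except the convergence of the directional derivatives.

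To get the directional-derivative convergence I would argue with upper semicontinuity plus a lower bound. For the upper bound: $\vec{D}f(\cdot;z)$ viewed as $x'\mapsto \max_{u\in\partial^-f(x')}\langle u,z\rangle$ (Lemma~\ref{superdiff}, valid since $z$ lies in the tangent cone — here one must check $x_n$ can be chosen so that $z$, or at least a positive multiple, stays tangent at $x_n$, which is automatic for $n$ large because the face of $C$ containing $x$ in its relative interior is the same as the one containing nearby points, or one simply restricts to that face) is upper semicontinuous: if $u_n\in\partial^-f(x_n)$ realizes the max, then by the Lipschitz bound $(u_n)$ is bounded, a subsequential limit $u$ lies in $\partial^-f(x)$ by closedness of the subdifferential graph, and so $\limsup_n \vec{D}f(x_n;z)=\limsup_n\langle u_n,z\rangle\le\langle u,z\rangle\le\vec{D}f(x;z)$. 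For the lower bound, I would use convexity directly: for fixed small $t>0$,
\[
\vec{D}f(x_n;z)\;\ge\;\text{(any difference quotient is $\ge$ the derivative? no — use monotonicity the other way).}
\]
More carefully: the map $s\mapsto \frac{f(x_n+sz)-f(x_n)}{s}$ is nondecreasing and its infimum over $s>0$ is $\vec{D}f(x_n;z)$, so for each fixed $s$, $\vec{D}f(x_n;z)\le \frac{f(x_n+sz)-f(x_n)}{s}$ — that is again an upper bound, not a lower one. The lower bound instead comes from picking, for each $n$, a subgradient $u_n\in\partial^-f(x_n)$; then $\langle u_n,z\rangle\le \vec D f(x_n;z)$ and also $\langle u_n, x-x_n\rangle + f(x_n)\le f(x)$ together with $\langle u_n, x_n-x\rangle+f(x)\le f(x_n)$, so any weak limit $u$ of $(u_n)$ satisfies $u\in\partial^-f(x)$; choosing $u_n$ to \emph{maximize} $\langle u_n,z\rangle$ makes $\liminf_n\vec D f(x_n;z)=\liminf_n\langle u_n,z\rangle$, and this still need not reach $\vec D f(x;z)$. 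The clean route, which I would actually write up, is: combine upper semicontinuity (just proved) with the inequality $\liminf_n \vec D f(x_n; z) \ge \vec D f(x;z)$ obtained from lower semicontinuity of $x'\mapsto\vec D f(x';z)$ — and lower semicontinuity of the directional derivative of a convex function in a fixed direction is standard (it is a sup of the continuous functions $x'\mapsto \frac{f(x'+sz)-f(x')}{s}$ over... no, it is an inf).

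Given the sign subtleties above, the honest statement is that the \textbf{main obstacle} is precisely the lower bound $\liminf_n \vec D f(x_n;z)\ge \vec D f(x;z)$, i.e.\ lower semicontinuity of the directional derivative — which is \emph{false} for general convex functions but can be salvaged here by a better choice of the exposed points $x_n$: rather than taking an arbitrary sequence from Straszewicz, I would choose $x_n$ \emph{along the ray relevant to $z$}, namely as exposed points approximating $x$ from the direction making $\vec D f(\cdot; z)$ as small as possible, or equivalently perturb the subgradient $\bar u\in\partial^-f(x)$ achieving $\vec D f(x;z)=\langle \bar u,z\rangle$ into one that exposes a nearby point. Concretely: pick $\bar u\in\partial^-f(x)$ with $\langle\bar u,z\rangle=\vec Df(x;z)$; for small $\delta>0$ the function $f(\cdot)-\langle\bar u+\delta w,\cdot\rangle$ (for generic $w$) attains its minimum over $C$ at a unique point $x_\delta$, which is therefore exposed, $x_\delta\to x$ as $\delta\to0$, and $\bar u+\delta w\in\partial^-f(x_\delta)$ gives $\vec Df(x_\delta;z)\ge\langle\bar u+\delta w,z\rangle\to\vec Df(x;z)$; combined with the upper semicontinuity above this forces $\vec Df(x_\delta;z)\to\vec Df(x;z)$. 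Establishing that such a generic $w$ exists making the minimizer unique (an exposure argument, using that $C$ is a polyhedron and $f$ is not affine near $x$ since $x$ is an extreme point) is the technical heart of the proof.
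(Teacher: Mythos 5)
Your final strategy is the right one and is essentially the paper's: fix $\bar u\in\partial^-f(x)$ with $\langle\bar u,z\rangle=\vec{D}f(x;z)$, produce exposed points $x_n\to x$ carrying subgradients $y_n\to\bar u$, use $y_n\in\partial^-f(x_n)$ and Lemma~\ref{superdiff} to get the lower bound $\vec{D}f(x_n;z)\ge\langle y_n,z\rangle\to\vec{D}f(x;z)$, and combine this with upper semicontinuity of $x'\mapsto\max_{u\in\partial^-f(x')}\langle u,z\rangle$. Your argument for that upper-semicontinuity half (boundedness of subgradients by the Lipschitz constant plus closedness of the graph of $\partial^-f$) is correct, as is your diagnosis that a bare application of Straszewicz's theorem gives the exposed approximants but not the convergence of directional derivatives.

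However, the step you defer --- ``establishing that such a generic $w$ exists making the minimizer unique \dots is the technical heart of the proof'' --- is precisely the content of the lemma, and it is not a pure genericity statement: for almost every $y'$ near $\bar u$ the minimizer of $f-\langle y',\cdot\rangle$ over $C$ is indeed unique (so that minimizer is exposed), but nothing yet guarantees that these minimizers converge to $x$ rather than to some other point of $\partial^-f^*(\bar u)$, which may be a nondegenerate face. The paper closes exactly this gap in two moves. First, it shows that $x$ is an \emph{extreme point of the set} $\partial^-f^*(\bar u)$: if $x$ lay in the relative interior of a segment contained in $\partial^-f^*(\bar u)$, then $\bar u$ would be a subgradient of $f$ at every point of that segment, forcing $f$ to be affine on it and contradicting that $x$ is an extreme point of $f$. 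Second, it invokes Theorem~25.6 of \cite{rockafellar} (together with Straszewicz and a diagonal extraction to pass from exposed to extreme points of $\partial^-f^*(\bar u)$) to produce $y_n\to\bar u$ at which $f^*$ is differentiable with $\nabla f^*(y_n)=x_n\to x$; Corollary~25.1.2 there identifies these $x_n$ as exposed points of $f$, and $y_n\in\partial^-f(x_n)$ gives the required lower bound. The extremality of $x$ inside $\partial^-f^*(\bar u)$ is where the hypothesis that $x$ is an extreme point of $f$ actually enters; without it your perturbation scheme cannot guarantee $x_\delta\to x$, so as written the proposal is incomplete at its acknowledged crux.
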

\begin{proof}
Let us choose $y \in \partial^-f(x)$ such that $\vec{D}f(x;z)=\langle z, y\rangle$. We claim that $x$ is an extreme point of $\partial^- f^*(y)$. Note at first that Fenchel's Lemma implies that $x\in \partial^- f^*(y)$. Assume then that $x$ is not an extreme point of $\partial^- f^*(y)$, so that there exists a segment $(x_1,x_2)$ containing $x$ and included in $\partial^- f^*(y)$. It follows that $y \in \partial^- f(x')$ for all $x' \in(x_1,x_2)$, and thus that $f$ restricted to this segment is affine, which contradicts the fact that $x$ is an extreme point of $f$. 
\p
Using Theorem 25.6 of \cite{rockafellar}, there exists therefore a sequence $y_n$ with limit $y$ such that $f^*$ is differentiable at $y_n$ and the sequence $x_n:=\nabla f^*(y_n)$ has limit $x$. (In the proof of Theorem 25.6 of \cite{rockafellar}, this statement is proved only for exposed points of $\partial^- f^*(y)$, but this extends easily to extreme points by a diagonal extraction). The sequence of points $x_n$ is made of exposed points of $f$ by Corollary 25.1.2 in \cite{rockafellar} and 
\[ \liminf_n\vec{D}f(x_n;z)\geq \lim_n\langle y_n, z \rangle =  \langle y,z \rangle =\vec{D}f(x;z).\]
The reverse inequality follows from Lemma \ref{superdiff} together with the fact the correspondence of superdifferentials has closed graph.  
\end{proof}

\end{document}